\documentclass{article}

\addtolength{\hoffset}{-2.5cm}
\addtolength{\textwidth}{5cm}
\addtolength{\voffset}{-2cm}
\addtolength{\textheight}{3.4cm}
\setlength\parindent{0pt}



\usepackage{comment}
\usepackage{amsfonts}
\usepackage{amssymb}
\usepackage{amsthm}
\usepackage{amsmath}

\usepackage{cite}
\usepackage{amsrefs}

\usepackage{longtable}

\usepackage[all,cmtip]{xy}

\usepackage{fancyhdr}

\usepackage{graphicx}
\usepackage{hyperref}

\usepackage{tikz-cd}

\usepackage{aurical}

\usepackage{stmaryrd}

\usepackage[T1]{fontenc}

\usepackage[outline]{contour}
\usepackage{xcolor}

\usepackage{array}


\newcommand{\what}[1]{\widehat{#1}}

\newcommand{\et}{{\acute{e}t}}

\newcommand{\CC}{\mathbb{C}}

\newcommand{\FF}{\mathbb{F}}

\newcommand{\NN}{\mathbb{N}}

\newcommand{\QQ}{\mathbb{Q}}

\newcommand{\ZZ}{\mathbb{Z}}

\newcommand{\Qbar}{\overline{\mathbb{Q}}}


\newcommand{\cC}{\mathcal{C}}

\newcommand{\eE}{\mathcal{E}}

\newcommand{\hH}{\mathcal{H}}

\newcommand{\mM}{\mathcal{M}}

\newcommand{\Zhat}{\widehat{\mathbb{Z}}}

\newcommand{\Ahat}{\widehat{A}}

\newcommand{\la}[1]{\,^{#1}\!} 


\newcommand{\mf}[1]{\mathfrak{#1}} 

\newcommand{\Aut}{\operatorname{Aut}}
\newcommand{\SAut}{\operatorname{SAut}}
\newcommand{\Inn}{\operatorname{Inn}}
\newcommand{\Out}{\operatorname{Out}}
\newcommand{\IOut}{\operatorname{IOut}}
\newcommand{\IAut}{\operatorname{IAut}}
\newcommand{\IAEnd}{\operatorname{IAEnd}}
\newcommand{\IA}{\operatorname{IA}}
\newcommand{\Ker}{\operatorname{Ker }}

\newcommand {\spmatrix}[4]{\left[\begin{smallmatrix}#1&#2\\#3&#4\end{smallmatrix}\right]}
\newcommand {\ttmatrix}[4]{\left[\begin{array}{rr}#1&#2\\#3&#4\end{array}\right]}

\newcommand{\cvector}[2]{\left[\begin{array}{c}#1\\#2\end{array}\right]}

\newcommand{\Gal}{\operatorname{Gal}}
\newcommand{\id}{\operatorname{id}}
\newcommand{\Hom}{\operatorname{Hom}}

\newcommand{\Epi}{\operatorname{Epi}}

\newcommand{\GL}{\operatorname{GL}}
\newcommand{\SL}{\operatorname{SL}}

\newcommand{\Stab}{\operatorname{Stab}}

\newcommand{\cotimes}{\,\widehat{\otimes}\,}

\newcommand{\ab}{\textrm{ab}}
\newcommand{\meta}{\textrm{meta}}

\newcommand{\Cyc}{\operatorname{Cyc}}

\newcommand{\rightiso}{\stackrel{\sim}{\longrightarrow}}

\newcommand{\ol}[1]{\overline{#1}}


\newcommand{\Spec}{\operatorname{Spec }}

\newcommand{\End}{\operatorname{End}}

\newcommand{\ext}{\text{ext}}

\newcommand{\ps}[1]{[\![#1]\!]}


\newcommand{\tp}{\text{top}}



\newcommand{\CF}{\operatorname{CF}}


\newcommand{\PMod}{\underline{\textbf{PMod}}}
\newcommand{\PAb}{\underline{\textbf{PAb}}}
\newcommand{\EExt}{\underline{\textbf{Ext}}}
\newcommand{\Sets}{\underline{\textbf{Sets}}}

\newcommand{\univ}{\text{univ}}



\newcommand{\sgap}{\vspace{0.2cm}}

\theoremstyle{definition}\newtheorem{defn}{Definition}[subsection]
\theoremstyle{remark}
\theoremstyle{remark}
\theoremstyle{remark}
\theoremstyle{remark}\newtheorem{remark}[defn]{Remark}
\theoremstyle{remark}\newtheorem*{remark*}{Remark}
\theoremstyle{remark}
\theoremstyle{remark}
\theoremstyle{remark}\newtheorem{example}[defn]{Example}

\theoremstyle{plain}\newtheorem{prop}[defn]{Proposition}
\theoremstyle{plain}\newtheorem{thm}[defn]{Theorem}
\theoremstyle{plain}\newtheorem{open}[defn]{Open Problem}
\theoremstyle{plain}\newtheorem*{thm*}{Theorem}
\theoremstyle{plain}\newtheorem{lemma}[defn]{Lemma}
\theoremstyle{plain}\newtheorem{cor}[defn]{Corollary}
\theoremstyle{plain}
\theoremstyle{plain}\newtheorem*{conj*}{Conjecture}
\theoremstyle{plain}\newtheorem*{prop*}{Proposition}


\renewcommand{\det}{\operatorname{det}}


\title{Arithmetic monodromy actions on pro-metabelian fundamental groups of once-punctured elliptic curves}
\author{William Yun Chen, Pierre Deligne}

\begin{document}
\maketitle

\tableofcontents

\section{Introduction}\label{section_introduction}
\subsection{The Problem}\label{ss_problem}
Let $F_2$ denote the free group on 2 generators. The abelianization $\ab : F_2\rightarrow \ZZ^2$ induces a surjective homomorphism $\ab_* : \Aut(F_2)\rightarrow\GL_2(\ZZ)$. A classical theorem of Nielsen (c.f. \cite{MKS04} \S3.5) implies that the kernel of $\ab_*$ is the subgroup $\Inn(F_2)$ of inner automorphsims of $F_2$. In other words, $\ab_*$ factors through an isomorphism $\Out(F_2) := \Aut(F_2)/\Inn(F_2)\rightiso \Aut(\ZZ^2)\cong\GL_2(\ZZ)$. 

\sgap

Given a finite 2-generated group $G$, let $\Epi^\ext(F_2,G)$ be the set of $G$-conjugacy classes of surjective homomorphisms from $F_2$ to $G$. There is a natural action of $\Out(F_2)$ on this set, and hence a natural action of $\SL_2(\ZZ)\subset\GL_2(\ZZ) = \Out(F_2)$. Given $\varphi : F_2\twoheadrightarrow G$, let $\Gamma_\varphi$ denote the stabilizer in $\SL_2(\ZZ)$ of the image of $\varphi$ in $\Epi^\ext(F_2,G)$.

\sgap

A finite index subgroup $\Gamma\le\SL_2(\ZZ)$ is \emph{congruence} if it contains $\Gamma(n) := \Ker(\SL_2(\ZZ)\rightarrow\SL_2(\ZZ/n))$ for some $n$. The purpose of this paper is to investigate special cases of the

\begin{open}\label{zee_probrem} For which $\varphi : F_2\twoheadrightarrow G$ is $\Gamma_\varphi$ a congruence subgroup of $\SL_2(\ZZ)$? Can we describe the orbits of $\SL_2(\ZZ)$ on $\Epi^\ext(F_2,G)$?
\end{open}

\subsection{Motivation}\label{ss_motivation}
This combinatorial problem \ref{zee_probrem} has a geometric origin. In \cite{Chen17}, the first author studies the notion of $G$-structures on elliptic curves, where $G$ is a finite 2-generated group.  Let $E$ be an elliptic curve with origin $O$, and let $E^\circ$ be the complement of $\{O\}$. If $E$ is over an algebraically closed field $\ol{K}$, a $G$-structure on $E$ is an element of $\Epi^\ext(\pi_1^\et(E^\circ,x),G)$ for some choice of base point $x$. If $x$ and $y$ are two base points, there is a canonical conjugacy class of isomorphisms between $\pi_1^\et(E^\circ,x)$ and $\pi_1^\et(E^\circ,y)$. Because of this, the choice of base point will be immaterial for our purposes, and in the following base points will often be omitted.

\sgap

Equivalently, a $G$-structure is an isomorphism class of connected $G$-torsors on $E^\circ$. Indeed, to give a $G$-torsor $P$ on $E^\circ$ (with $G$ acting on the right) amounts to giving its fiber $P_x$ at $x$, and a left action $\sigma$ of $\pi_1^\et(E^\circ,x)$ on the $G$-torsor $P_x$. The choice of a $p\in P_x$ defines an isomorphism of $G$-torsors $G\rightiso P_x$ sending $g\mapsto pg$ for any $g\in G$. This choice of $p$ hence identifies $\sigma$ with the homomorphism $\varphi : \pi_1^\et(E^\circ,x)\rightarrow G$ defined by $\sigma(\gamma)(pg) = p\varphi(\gamma)g$ for $\gamma\in\pi_1^\et(E^\circ,x)$. This homomorphism $\varphi$ is surjective if and only if $P$ is connected. If $p$ is replaced by $ph$ for $h\in G$, then $\varphi$ is changed into the homomorphism $\gamma\mapsto h^{-1}\varphi(\gamma)h$.

\sgap

The relation with \S\ref{ss_problem} is that if $\ol{K} = \CC$, then $\pi_1^\tp(E^\circ(\CC))$ is a free group on two generators, and $\pi_1^\et(E^\circ)$ can be identified with its profinite completion, and hence is a free profinite group of rank 2.

\sgap

For $E$ an elliptic curve over a general base scheme $S$, one should consider the presheaf which to any $U\rightarrow S$ attaches the set of isomorphism classes of geometrically connected $G$-torsors $P$ on $E^\circ_U := E^\circ\times_S U$. Here, ``geometrically connected'' means that the geometric fibers of $P\rightarrow U$ are connected. A $G$-structure on $E$ is then a global section of the associated \'{e}tale sheaf. If $S$ is the spectrum of a field $K$ with algebraic closure $\ol{K}$, a $G$-structure on $E$ is a $G$-structure on $E_{\ol{K}}$ which is fixed by $\Gamma_K := \Gal(\ol{K}/K)$. If $G$ is centerless, geometrically connected $G$-torsors have no nontrivial automorphisms, and hence a $G$-structure is simply the data of a geometrically connected $G$-torsor up to (unique) isomorphism. If $G$ is abelian and $|G|$ prime to all residue characteristics, a $G$-torsor on $E^\circ$ extends to $E$, and $G$-structures can be identified with geometrically connected $G$-torsors on $E$, trivialized above the origin $O$. In this case they are equivalent to classical level structures (c.f. \cite{Chen17} 2.2.12).

\sgap

In \cite{Chen17} it is shown that the moduli of elliptic curves equipped with $G$-structures gives rise to moduli stacks $\mM(G)$ over $\QQ$, such that the forgetful functor gives a finite \'{e}tale morphism $\mM(G)\stackrel{p}{\rightarrow}\mM(1)$, where $\mM(1)$ is the moduli stack of elliptic curves over $\QQ$. More generally, this holds over $\ZZ[1/|G|]$. The complex analytic stack (orbifold) $\mM(1)(\CC)$ is $[\hH/\SL_2(\ZZ)]$. If $E$ is an elliptic curve over $\CC$ with homology $H_1E := H_1(E(\CC),\ZZ)$ and $x_E$ is the corresponding base point of $\mM(1)(\CC)$, the action of $\pi_1^\tp(\mM(1)(\CC),x_E)$ on $H_1E$ is given by an isomorphism $\pi_1^\tp(\mM(1)(\CC),x_E)\rightiso\SL(H_1E)$ which we will use to identify the two groups. The \'{e}tale fundamental group is the profinite completion $\what{\SL(H_1E)}$.

\sgap

The fiber $p^{-1}(x_E)$ of the finite \'{e}tale map $\mM(G)_\CC\stackrel{p}{\rightarrow}\mM(1)_\CC$ at $x_E$ can be canonically identified with the set
$$\Epi^\ext(\pi_1^\et(E^\circ),G) \cong \Epi^\ext(\pi_1^\tp(E^\circ(\CC)),G)\cong \Epi^\ext(F_2,G)$$
The abelianization of the free group $\pi_1^\tp(E^\circ(\CC))$ is $H_1(E^\circ(\CC),\ZZ)\rightiso H_1E$, and the action of $\pi_1(\mM(1)(\CC),x_E) = \SL(H_1E)$ on $p^{-1}(x_E)$ is given as in \S\ref{ss_problem}: $\SL(H_1E)$ is viewed as an index 2 subgroup of $\Out(\pi_1^\tp(E^\circ(\CC))\cong\GL(H_1E)$, which naturally acts on $\Epi^\ext(\pi_1^\tp(E^\circ(\CC),G)$. By the Galois correspondence, we get
$$\mM(G)_\CC = \bigsqcup_{\varphi} [\hH/\Gamma_\varphi]$$
where $\varphi$ ranges over representatives of $\SL_2(\ZZ)$-orbits on $\Epi^\ext(F_2,G)$, and $\Gamma_\varphi := \Stab_{\SL_2(\ZZ)}(\varphi)$.

\sgap

By a theorem of Asada \cite{Asa01}, one finds that \emph{every} finite index subgroup $\Gamma\le\SL_2(\ZZ)$ contains $\Gamma_\varphi$ for a suitable group $G$ and surjection $\varphi : F_2\twoheadrightarrow G$. Thus, as a result, we can interpret every modular curve - that is, $\hH/\Gamma$ for some finite index subgroup $\Gamma\le\SL_2(\ZZ)$ - as a quotient of a moduli space of elliptic curves equipped with a suitable nonabelian $G$-structure. However, Asada's proof says very little about the relation between $\varphi$, and $\Gamma_\varphi$.

\sgap

We will say that a finite 2-generated group $G$ is \emph{congruence} if all stabilizers $\Gamma_\varphi$ of all surjections $\varphi : F_2\twoheadrightarrow G$ are congruence subgroups of $\SL_2(\ZZ)$, or what is the same, if all components of $\mM(G)_\CC$ are congruence modular curves, and noncongruence if this does not hold. We'll say that $G$ is \emph{purely noncongruence} if all components of $\mM(G)_\CC$ are noncongruence.

\sgap

While it is clear that all abelian groups $G$ are congruence, the converse is not true, and there are many examples of surjections $\varphi$ onto nonabelian groups with $\Gamma_\varphi$ nonetheless congruence. In \cite{Chen17} \S4.2, the first author shows that the dihedral groups $D_{2k}$ are congruence, and in \S4.4, gives an example of an infinite family of nonabelian finite simple groups which are noncongruence. In general, this seems to be rather mysterious, but the expectation is that any sufficiently nonabelian group - in particular, any nonabelian finite simple group - should be purely noncongruence.

\subsection{Overview of main results}\label{ss_overview}

The purpose of this paper is to study the case when $G$ is a finite 2-generated metabelian group\footnote{a metabelian group is a group whose commutator subgroup is abelian.}. In some sense this is as close to being abelian as we can get, without actually being abelian. The basic result is that for any such $G$ and any surjection $\varphi : F_2\twoheadrightarrow G$, $\Gamma_\varphi := \Stab_{\SL_2(\ZZ)}(\varphi)$ is a congruence subgroup, and moreover, $\mM(G)_\QQ$ is a disjoint union of copies of moduli stacks of elliptic curves with classical level structures. Since any map from $F_2$ to a finite metabelian group $G$ factors through the rank 2 free profinite metabelian group $\what{M}$, we are led to study $\what{M}$ and its automorphisms. The homogeneous disjoint union decomposition of $\mM(G)_\QQ$ is a consequence of a surprising group-theoretic result that all endomorphisms of $\what{M}$ which induce the identity on the abelianization leaves every open normal subgroup stable.

\sgap

For simplicity we have stated our results over subfields $K\subset\Qbar$. However, since our methods are almost entirely group-theoretic, analogous results hold over arbitrary $\ZZ[1/|G|]$-schemes. Moreover, our results on $\Out(\what{M})$ may be of interest to group-theorists. The reader only interested in the group theory can skip to \S\ref{sss_group_theory}.

\subsubsection{Geometric preliminaries}\label{sss_geometric_preliminaries}

Let $\Qbar$ denote the algebraic closure of $\QQ$ inside $\CC$. Let $K$ be a subfield of $\Qbar$, and $\ol{K} := \Qbar$. Let $E$ be an elliptic curve over $K$, corresponding to a $K$-point $x : \Spec K\rightarrow\mM(1)_K$. Similarly, $E_{\ol{K}}$ determines a geometric point $\ol{x} : \Spec\ol{K}\rightarrow\mM(1)_K$. Then there is a homotopy exact sequence of fundamental groups
$$1\rightarrow\pi_1^\et(\mM(1)_{\Qbar},\ol{x})\rightarrow\pi_1^\et(\mM(1)_K,\ol{x})\rightarrow\Gamma_K\rightarrow 1$$
which is split by the $K$-point $x$. Again letting $H_1E := H_1(E(\CC),\ZZ)$, we have $\pi_1^\et(\mM(1)_{\Qbar},\ol{x}) = \what{\SL(H_1E)}\cong \what{\SL_2(\ZZ)}$, so we may identify $\pi_1^\et(\mM(1)_K,\ol{x}) = \what{\SL(H_1E)}\rtimes\Gamma_K\cong \what{\SL_2(\ZZ)}\rtimes\Gamma_K$. Let $\eE^\circ$ be the universal punctured elliptic curve over $\mM(1)_K$, then $E^\circ$ is the fiber of $\eE^\circ$ at $x$, and for any geometric point $\ol{y}$ on $E^\circ$, we have another exact sequence
$$1\rightarrow\pi_1^\et(E_{\ol{K}}^\circ,\ol{y})\rightarrow\pi_1^\et(\eE^\circ,\ol{y})\rightarrow\pi_1^\et(\mM(1)_K,\ol{x})\rightarrow 1$$
from which we get a monodromy representation
$$\rho_{E^\circ/K} : \pi_1^\et(\mM(1)_K,\ol{x})=\what{\SL(H_1E)}\rtimes\Gamma_K \rightarrow \Out(\pi_1^\et(E^\circ_{\ol{K}},\ol{y}))\cong\Out(\what{F_2})$$
such that the restriction to $\what{\SL(H_1E)}$ is precisely the action described earlier.

\sgap

Let $\cC$ be a class of finite groups closed under quotients and subdirect products, and containing all finite abelian groups. Let $\pi_1^\et(E^\circ_{\ol{K}})^\cC$ be the maximal pro-$\cC$ quotient of $\pi_1^\et(E^\circ_{\ol{K}})$. Since it is a characteristic quotient, it inherits from $\rho_{E^\circ/K}$ an outer action:
$$\rho_{E^\circ/K}^\cC : \pi_1^\et(\mM(1)_K,\ol{x})\rightarrow \Out(\pi_1^\et(E^\circ_{\ol{K}})^\cC)$$
If $\cC = \ab$ is the class of all finite abelian groups, then the natural inclusion $i : E^\circ_{\ol{K}}\hookrightarrow E_{\ol{K}}$ induces a map $\pi_1^\et(E^\circ_{\ol{K}})\stackrel{i_*}{\rightarrow}\pi_1^\et(E_{\ol{K}})$ which group-theoretically is just abelianization, and hence we have $\pi_1^\et(E_{\ol{K}})\cong\pi_1^\et(E^\circ_{\ol{K}})^\ab$. The corresponding representation $\rho^\ab_{E^\circ/K} : \pi_1^\et(\mM(1)_K) =\what{\SL(H_1E)}\rtimes\Gamma_K\rightarrow \Out(\pi_1^\et(E^\circ_{\ol{K}})^\ab) = \GL(\what{H_1E})$ can be identified with the representation $\rho_{E/K}$ coming from the unpunctured $E$, which sends $\what{\SL(H_1E)}\cong\what{\SL_2(\ZZ)}$ onto $\SL_2(\Zhat)\cong\SL(\what{H_1E})\subset\GL(\what{H_1E})$, and sends $\Gamma_K$ to $\GL(\what{H_1E})$ via the standard Galois representation on the Tate module of $E$. In a diagram, we have

\[\begin{tikzcd}
& & & \Out(\pi_1^\et(E^\circ_{\ol{K}}))\arrow[d] \\
\what{\SL_2(\ZZ)}\rtimes\Gamma_K\cong \what{\SL(H_1E)}\rtimes\Gamma_K\arrow[r,equals] & \pi_1^\et(\mM(1)_K,\ol{x})\arrow[rru,bend left = 25,"\rho_{E^\circ/K}"]\arrow[rr,"\rho_{E^\circ/K}^\cC"]\arrow[rrd,bend right=25,"\rho_{E/K}"'] & & \Out(\pi_1^\et(E^\circ_{\ol{K}})^\cC)\arrow[d,"\ab_*"] \\
& & & \Out(\pi_1^\et(E_{\ol{K}}))\arrow[r,equals] & \GL(\what{H_1E}) \cong\GL_2(\Zhat)
\end{tikzcd}\]

\begin{defn} We will say that a class $\cC$ as above is \emph{geometrically congruence} if $\rho_{E^\circ/K}^\cC|_{\what{\SL(H_1E)}}$ factors through the congruence quotient $\SL_2(\Zhat)\cong\SL(\what{H_1E})$, and we will say that $\cC$ is \emph{arithmetically congruence} if the image of $\rho_{E^\circ/K}^\cC$ is mapped isomorphically onto the image of $\rho_{E^\circ/K}^\ab = \rho_{E/K}$ for every subfield $K\subset\Qbar$.
\end{defn}

Note that the two notions coincide when $K = \Qbar$, and hence being arithmetically congruence implies being geometrically congruence. Furthermore, since all representations above are continuous, if $\rho_{E^\circ/K}|_{\what{\SL(H_1E)}}$ factors through the congruence quotient $\SL(\what{H_1E})$, then for any $G\in\cC$ and any surjection $\varphi : F_2\cong \pi_1^\tp(E^\circ(\CC))\twoheadrightarrow G$, $\Gamma_\varphi$ will correspond to an open subgroup of $\SL(\what{H_1E})\cong\SL_2(\Zhat)$, and hence will be congruence.

\sgap

We can finally state our main results.

\begin{thm*}[\ref{thm_metabelian_congruence}, \ref{cor_M1_monodromy}] If $\cC = \meta$ is the class of metabelian groups, then $\cC$ is arithmetically congruence.
\end{thm*}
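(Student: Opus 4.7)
The plan is to reduce arithmetic congruence for $\meta$ to an analysis of the free pro-metabelian group $\what{M} := \pi_1^\et(E_{\ol{K}}^\circ)^\meta$ on two generators, viewed as an extension
$$1 \to \what{M}' \to \what{M} \to \Zhat^2 \to 1,$$
where $\what{M}' := [\what{M}, \what{M}]$ is a module over the completed group algebra $\what{R} := \Zhat[[\Zhat^2]]$. By Fox calculus (or the Magnus embedding), $\what{M}'$ is free of rank one as an $\what{R}$-module, generated by the basic commutator $c := [a, b]$ of topological generators $a, b$. The theorem is equivalent to showing that the intersection of the image of $\rho_{E^\circ/K}^\meta$ with $\IOut(\what{M}) := \Ker(\Out(\what{M}) \to \GL_2(\Zhat))$ is trivial.

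The first step is the group-theoretic lemma foreshadowed in the introduction: every endomorphism $\alpha$ of $\what{M}$ inducing the identity on $\what{M}^\ab$ preserves every open normal subgroup. Such $\alpha$ has the form $\alpha(a) = a \cdot r_1 c$ and $\alpha(b) = b \cdot r_2 c$ with $r_i \in \what{R}$, and the metabelian commutator identity $[gm, hn] = [g, n] \cdot [g, h] \cdot [m, h]$ (for $m, n$ in the abelian commutator subgroup) gives $\alpha|_{\what{M}'}$ as multiplication by $u_\alpha := 1 + (x-1) r_2 - (y-1) r_1 \in \what{R}$, where $x, y \in \what{R}$ are the images of $a, b$. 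Since $u_\alpha$ has constant term $1$, it is automatically a unit in the Iwasawa algebra $\what{R}$, and multiplication by $u_\alpha$ preserves every ideal; a short additional argument using the cocycle description of the extension handles general open normal subgroups not contained in $\what{M}'$.

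The second step is to deduce arithmetic congruence. The key geometric input is that $c$ represents a small loop around the puncture $O \in E$, so the monodromy preserves its conjugacy class (the cyclotomic twist vanishes precisely when $\rho_{E/K}(\gamma) = 1$, by the Weil pairing). For $\gamma \in \pi_1^\et(\mM(1)_K)$ with $\rho_{E/K}(\gamma) = 1$, I would choose a representative $\tilde\gamma \in \Aut(\what{M})$ with $\tilde\gamma^\ab = \id$ and $\tilde\gamma(c) = c$ exactly. Then $\tilde\gamma|_{\what{M}'}$ is $\what{R}$-linear and fixes the generator $c$, hence is the identity; the data of $\tilde\gamma$ then reduces to a single twist parameter $t \in \what{R}$ via $r_1 = (x-1) t$, $r_2 = (y-1) t$ (as $x-1, y-1$ form a regular sequence in the UFD $\what{R}$). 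Group theory alone shows that such twists with $t \neq 0$ give nontrivial elements of $\IOut(\what{M})$, so arithmetic congruence is not a formal consequence of the first step. I would eliminate the twist by invoking the equivariance of $\rho_{E^\circ/K}^\meta$ under the natural $\GL_2(\Zhat)$-action on $\what{R}$, which combined with the lemma of the first step forces $t = 0$ for any monodromy element with trivial abelianization.

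The main obstacle is this last step, the elimination of the twist parameter $t$ for monodromy elements. The map $\gamma \mapsto t_\gamma$ defines a homomorphism from $\Ker(\rho_{E/K})$ to the additive group of $\what{R}$, and proving it vanishes requires genuine geometric input beyond the group-theoretic lemma --- for example, that the monodromy factors through the subgroup of $\Out(\what{F_2})$ fixing the puncture with a prescribed marking, whose image in $\IOut(\what{M})$ is constrained by $\GL_2(\Zhat)$-equivariance. The delicate point is to verify that this equivariance interacts with the twist parametrization $\what{R}$ in a way that leaves no nonzero $\GL_2(\Zhat)$-invariant homomorphism; a careful combination of the Iwasawa-algebra structure with the cocycle description of $\what{M}$ should close the argument.
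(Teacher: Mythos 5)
There is a genuine error at the heart of your step 2, and it is the opposite of a gap: the ``twist'' automorphisms you claim survive are exactly the inner ones. If $\tilde\gamma$ is IA with parameters $r_1=(x-1)t$, $r_2=(y-1)t$, then by the commutation relation $x_i\,c^{t}\,x_i^{-1}=c^{a_i t}$ (Lemma \ref{lemma_commutation_relation}) the automorphism sending $x_i\mapsto c^{t(a_i-1)}x_i$ is precisely conjugation by $c^{-t}$. So your claim that ``group theory alone shows that such twists with $t\neq 0$ give nontrivial elements of $\IOut(\what{M})$'' is false: every such twist is trivial in $\IOut(\what{M})$ (it is nontrivial in $\Aut(\what{M})$ because $\what{M}$ is center-free, Corollary \ref{cor_Mhat_center_free}, but that is irrelevant for the outer class). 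This is exactly the content of Proposition \ref{prop_IA_is_inner}: an IA-automorphism is inner if and only if its Bachmuth determinant lies in $\what{A}$, proved via the coprimality of $a_1-1,a_2-1$ from Proposition \ref{hard_prop}. Consequently your ``main obstacle'' --- eliminating the twist parameter by a $\GL_2(\Zhat)$-equivariance argument --- is a phantom, and since you explicitly leave that (nonexistent) step unproved, the proposal as written is not a proof; ironically, had you recognized the twists as inner, your own argument (choose an IA representative fixing $c$, note its determinant is $1$, conclude it is inner) would close along essentially the paper's lines.

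Two further points need attention even after that repair. First, you need the statement ``the monodromy carries $c$ into the conjugacy class of $c^{\chi}$'' for the \emph{entire} image of $\pi_1^\et(\mM(1)_K)$, not just for $\Gamma_K$: the kernel of $\rho_{E/K}$ contains profinite geometric elements (e.g.\ the congruence kernel of $\what{\SL_2(\ZZ)}$) and mixed products, for which this is not a direct branch-cycle or mapping-class statement. The paper obtains it from the continuity of the crossed homomorphism $\det_c$ together with $\det_c(\gamma_E)=a_1^{-1}$, $\det_c(\gamma_T)=a_2$ (Example \ref{ex_SL2Z}, Proposition \ref{prop_crossed_homomorphism}), and from the fact that braid-like classes form a subgroup mapping isomorphically to $\GL(\what{A})$ (Propositions \ref{prop_braid_like}, \ref{prop_braid_likes_give_splitting}). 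Second, your assertion that $u_\alpha$ ``has constant term $1$, hence is automatically a unit'' is false in $\Zhat\ps{\what{A}}$, which is a product of complete local rings (Proposition \ref{hard_prop}); augmentation-one elements need not be units, which is precisely why the paper distinguishes special elements from special units in Proposition \ref{prop_det}. (This slip is harmless where you use it, since multiplication by any ring element preserves ideals, but your step-1 lemma --- the paper's Theorem \ref{thm_IAEnds_are_normal} --- is neither needed for arithmetic congruence nor as easy as your sketch suggests.)
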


Moreover, we find that the connected components of $\mM(G)_\QQ$ are all isomorphic:

\begin{thm*}[\ref{thm_metabelian_is_e_congruence}] Let $G$ be a 2-generated metabelian group, and let $\mM(G)_\QQ$ be the moduli stack of elliptic curves with $G$-structures. The group $\Aut(G)$ acts (via its quotient $\Out(G)$) as automorphisms of the cover $\mM(G)_\QQ\rightarrow\mM(1)_\QQ$, and we have
\begin{itemize}
\item $\Out(G)$ permutes transitively the connected components of $\mM(G)_\QQ$, which are hence all isomorphic, and
\item If $G$ is of exponent $e$, then there is a subgroup $H\le\GL_2(\ZZ/e)$ such that each connected component of $\mM(G)_\QQ$ is isomorphic to $\mM((\ZZ/e)^2)_\QQ/H$.
\end{itemize}
In particular, for any elliptic curve $E$ over a subfield $K\subset\Qbar$, $G$-structures on $E$ are equivalent to congruence structures of level $H$.
\end{thm*}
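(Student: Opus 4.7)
The plan is to work on the combinatorial object $\Epi^\ext(\what{M},G)$, where $\what{M}$ denotes the rank-$2$ free pro-metabelian group, equipped with its commuting actions of $\Out(\what{M})$ (by pre-composition) and $\Out(G)$ (by post-composition), and then transfer the conclusions to the cover $\mM(G)_\QQ\to\mM(1)_\QQ$ via the Galois correspondence. The central group-theoretic input beyond Theorem~\ref{thm_metabelian_congruence} is the key fact advertised in \S\ref{ss_overview}: every endomorphism of $\what{M}$ which induces the identity on $\what{M}^\ab=\Zhat^2$ preserves every open normal subgroup of $\what{M}$.

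First, $(E,\varphi)\mapsto(E,\alpha\circ\varphi)$ defines an $\Aut(G)$-action on $\mM(G)_\QQ$ over $\mM(1)_\QQ$; it descends to $\Out(G)$ because $\Inn(G)$ fixes every class $[\varphi]\in\Epi^\ext$, and it is defined over $\QQ$ since post-composition commutes with Galois. By Theorem~\ref{thm_metabelian_congruence}, the monodromy $\rho^\meta_{E^\circ/\QQ}$ has image mapping isomorphically onto the image of $\rho_{E/\QQ}$, which for $K=\QQ$ is the full $\GL_2(\Zhat)$ (geometric $\SL_2(\Zhat)$ together with the surjective cyclotomic character on $\Gamma_\QQ$). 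Hence the connected components of $\mM(G)_\QQ$ correspond to orbits on $\Epi^\ext(\what{M},G)$ of the canonical lift $s:\GL_2(\Zhat)\hookrightarrow\Out(\what{M})$ supplied by arithmetic congruence, and the commuting $\Out(G)$-action permutes these orbits.

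For the first bullet (transitivity), consider the abelianization map $\pi:\Epi^\ext(\what{M},G)\to\Epi(\Zhat^2,G^\ab)$, $\varphi\mapsto\varphi^\ab$. Elementary-divisor theory over the semi-local ring $\Zhat$ shows that $\GL_2(\Zhat)$ acts transitively on the target, so given $\varphi_1,\varphi_2\in\Epi^\ext(\what{M},G)$ we may act by an appropriate $g\in\GL_2(\Zhat)$ to reduce to $\bar\varphi_1=\bar\varphi_2$ as honest homomorphisms. Choose generators $m_1,m_2$ of $\what{M}$ and write $\varphi_2(m_j)=\varphi_1(m_j)h_j$ with $h_j\in[G,G]$; using the surjection $\varphi_1:[\what{M},\what{M}]\twoheadrightarrow[G,G]$, lift $h_j=\varphi_1(c_j)$ for $c_j\in[\what{M},\what{M}]$, and by the universal property of $\what{M}$ define $\sigma\in\End(\what{M})$ by $\sigma(m_j):=m_jc_j$. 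Then $\sigma^\ab=\id$ and $\varphi_1\circ\sigma=\varphi_2$. By the key fact, $\sigma(\ker\varphi_1)\subseteq\ker\varphi_1$, so $\sigma$ descends to an endomorphism $\bar\sigma:G\to G$ with $\bar\sigma\circ\varphi_1=\varphi_2$; since $\varphi_2$ is surjective and $G$ is finite, $\bar\sigma$ is automatically an automorphism, necessarily in $\IA(G)$. Hence $[\varphi_2]_\ext=\bar\sigma\cdot[\varphi_1]_\ext$, and combined with the $\GL_2(\Zhat)$-reduction this shows $\Out(G)\times\GL_2(\Zhat)$ acts transitively on $\Epi^\ext(\what{M},G)$, so $\Out(G)$ acts transitively on $\GL_2(\Zhat)$-orbits—that is, on components.

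For the second bullet, let $e=\exp(G)$; every $\varphi$ factors through the characteristic quotient $\what{M}^{(e)}:=\what{M}/\what{M}^{[e]}$ whose abelianization is $(\ZZ/e)^2$. The identification of each component with $\mM((\ZZ/e)^2)_\QQ/H$ reduces to proving the containment $\Gamma_\varphi\supseteq\Gamma(e)$ in $\SL_2(\Zhat)$—equivalently, that the canonical lift $s$ carries $\Gamma(e)$ into the kernel of the composite $\Out(\what{M})\to\Out(\what{M}^{(e)})\to\Out(G)$; then $H:=\Gamma_\varphi/\Gamma(e)\subseteq\SL_2(\ZZ/e)$, extended by the cyclotomic character to a subgroup of $\GL_2(\ZZ/e)$, yields $[\hH/\Gamma_\varphi]\cong\mM((\ZZ/e)^2)_\QQ/H$, and by the first bullet the same $H$ works for every component. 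The main obstacle is precisely this containment: the key fact implies that any $\sigma_e\in\IA(\what{M}^{(e)})$ arising from the canonical lift of $\Gamma(e)$ descends to an element of $\IA(G)\subseteq\Aut(G)$, but one must further show this element is inner in $G$ (so that it fixes $[\varphi]_\ext$). This is the delicate point, and is expected to follow from a finer structural analysis of $\IOut(\what{M})$ carried out in the preparatory group-theoretic sections of the paper.
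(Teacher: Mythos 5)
Your setup and your proof of the first bullet are essentially sound. The identification of components with orbits of $\Out^b(\what{M})\cong\GL_2(\Zhat)$ via Corollary \ref{cor_M1_monodromy}, and your direct construction of an IA-endomorphism $\sigma$ with $\varphi_1\circ\sigma=\varphi_2$ (after a Gasch\"utz-type $\GL_2(\Zhat)$-reduction on abelianizations), followed by Theorem \ref{thm_IAEnds_are_normal} to descend $\sigma$ to an automorphism of $G$, is a legitimate variant of the paper's Corollary \ref{cor_isomorphic_GL2_orbits}, which instead invokes transitivity of $\Out(\what{M})$ on $\Epi^\ext(\what{M},G)$ together with the decomposition $\gamma=\gamma_{\IA}\circ\gamma_b$ of Corollary \ref{cor_unique_decomposition}. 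Both routes hinge on the same key input, Theorem \ref{thm_IAEnds_are_normal}, so this part is fine.

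The second bullet, however, is not proved: you reduce it to the containment $\what{H}_\varphi\supseteq G\Gamma(e)$ (equivalently $\Gamma_\varphi\supseteq\Gamma(e)$ together with the scalar directions) and then explicitly defer this ``delicate point'' to an expected finer analysis of $\IOut(\what{M})$. That containment \emph{is} the content of the second bullet, and the route you sketch does not obviously close it: the lifts $s(\gamma)$ for $\gamma\in\Gamma(e)$ are braid-like but not IA on $\what{M}$, so Theorem \ref{thm_IAEnds_are_normal} does not apply to them, and nothing in the paper asserts that IA-automorphisms of the characteristic quotient $\what{M}^{(e)}$ preserve $\Ker(\what{M}^{(e)}\to G)$, let alone that the descended automorphisms of $G$ are inner --- indeed, showing they act by inner automorphisms is exactly equivalent to the containment you are trying to prove. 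The paper's actual argument is quite different: the explicit automorphisms $(x_1,x_2)\mapsto(x_1,x_1^ex_2)$ and its $\SL_2(\ZZ)$-conjugates fix every surjection $\what{M}\to G$ on the nose because $G$ has exponent $e$, so the three matrices $1+eX_i$ of Lemma \ref{lemma_awesome} lie in $\Gamma_\varphi$; by Theorem \ref{thm_metabelian_congruence}, $\Gamma_\varphi$ is a congruence subgroup; and the separate, purely group-theoretic Lemma \ref{lemma_awesome} (proved by a $p$-adic formal-group argument) shows that any congruence subgroup containing $1+eX_1,1+eX_2,1+eX_3$ contains $\Gamma(e)$. The remaining scalar part of $G\Gamma(e)$ is handled by the diagonal automorphisms $\gamma_u:(x_1,x_2)\mapsto(x_1,x_2^u)$, $u\equiv 1\bmod e$, via Lemma \ref{lemma_diagonal_automorphisms}. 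None of these ingredients appears in your proposal, so the second bullet (and with it the final ``in particular'' clause identifying $G$-structures with level-$H$ structures) remains a genuine gap.
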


\subsubsection{Group theory}\label{sss_group_theory}
We now describe our methods. One should imagine that we have made the following identifications/definitions:
$$F := F_2 = \pi_1^\tp(E^\circ(\CC)),\qquad M := F/F'', \qquad A := F^\ab = H_1E := H_1(E(\CC),\ZZ)\;\;(\cong\ZZ^2)$$
$$\what{F} = \pi_1(E^\circ_{\ol{K}}),\qquad\what{M} = \pi_1(E^\circ_{\ol{K}})^\meta,\qquad \Ahat = \pi_1(E^\circ_{\ol{K}})^\ab = \pi_1(E_{\ol{K}})\;\;(\cong\Zhat^2)$$
Thus, $\what{M}$ is a free profinite metabelian group of rank 2 with abelianization $\what{A}$. Then, $\what{A}$ acts on the commutator subgroup $\what{M}'$ by conjugation, giving $\what{M}'$ the structure of a module under the completed group algebra $\Zhat\ps{\what{A}}$. Let $\what{T}\cong\Zhat\ps{\what{A}}^2$ be a free $\Zhat\ps{\what{A}}$ module of rank 2. Using a result of Remeslennikov \cite{Rem79}, we obtain an embedding 
$$\mu : \what{M}\hookrightarrow\what{T}\rtimes\what{A} \quad (\text{c.f. \S\ref{ss_profinite_magnus}})$$
respecting the projections to $\what{A}$. We show that this embedding is universal amongst embeddings of $\what{M}$ into split extensions of $\what{A}$. Let IA denote ``(I)dentity on (A)belianization'', then IA-endomorphisms of $\what{M}$ are precisely the endomorphisms of $\what{M}$ as an extension of $\what{A}$. By universality of the embedding $\mu$, IA-endomorphisms of $\what{M}$ naturally extend to $\Zhat\ps{\what{A}}$-linear endomorphisms of $\what{T}$, through which we have the canonical notion of the ``(Bachmuth) determinant'' (c.f. \S\ref{ss_bachmuth}) of an IA-endomorphism.

\sgap

If $x_1,x_2$ generate $\what{M}$, with images $a_1,a_2\in\what{A}$, then the image via $\mu$ of their commutator is
$$\mu([x_1,x_2]) = \mu(x_1x_2x_1^{-1}x_2^{-1}) = ((1-a_2)t_1 + (a_1-1)t_2,1)\in\what{T}\rtimes\what{A}$$
where $t_1,t_2$ is a basis of $\what{T}$ satisfying $\mu(x_i) = (t_i,a_i)$. By giving an explicit decomposition of the ring $\Zhat\ps{\what{A}}$ as a direct product of $p$-adic power series rings, we find that the elements $a_i-1\in\Zhat\ps{\what{A}}$ are not zero-divisors, and hence $\what{M}'$ is a \emph{free} $\Zhat\ps{\what{A}}$-module of rank 1. Using an explicit parametrization of all IA-endomorphisms (\S\ref{ss_IAEnds}), we furthemore find that $\mu(\what{M}')$ sits inside $\what{T} = \what{T}\rtimes 1$ as a common ``eigenspace'' of every IA-endomorphism, with eigenvalues given by the (Bachmuth) determinants. A key question is understanding the image of the determinant map, and to characterize the inner automorphisms. Let $\Zhat\ps{\what{A}}^{\times'}$ be the subgroup of $\Zhat\ps{\what{A}}^\times$ consisting of units with augmentation 1 (c.f. \ref{def_distinguished}). Then we have

\begin{thm*}[\ref{prop_det}, \ref{prop_IA_is_inner}] Given $\gamma\in\IAEnd(\what{M})$, $\gamma$ is an automorphism if and only if $\det(\gamma)\in\Zhat\ps{\what{A}}^{\times'}$, and $\gamma$ is an inner automorphism if and only if $\det(\gamma)\in\what{A}\subset\Zhat\ps{\what{A}}$.
\end{thm*}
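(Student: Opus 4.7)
To prove the first assertion, I would exploit the short exact sequence $1 \to \what{M}' \to \what{M} \to \what{A} \to 1$ together with the facts, already established in the excerpt, that $\what{M}'$ is free of rank $1$ as a $\Zhat\ps{\what{A}}$-module and that every $\gamma \in \IAEnd(\what{M})$ acts on it by multiplication by $\det(\gamma)$. Since $\gamma$ is the identity on $\what{A}$, the short five lemma shows that $\gamma$ is an automorphism of $\what{M}$ if and only if its restriction $\gamma|_{\what{M}'}$ is, which by rank-$1$ freeness is equivalent to $\det(\gamma) \in \Zhat\ps{\what{A}}^\times$. To upgrade this to the condition $\det(\gamma) \in \Zhat\ps{\what{A}}^{\times'}$, I would note that under the augmentation $\Zhat\ps{\what{A}} \to \Zhat$ the Bachmuth matrix of $\gamma$ specializes to the matrix of $\gamma$'s induced action on $\what{A}$, which is the identity; hence the augmentation of $\det(\gamma)$ is automatically $1$, and the two conditions coincide.

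For the forward direction of the second assertion, the inner automorphism $\gamma_m$ acts on the abelian group $\what{M}'$ by conjugation, which coincides with the $\Zhat\ps{\what{A}}$-module action by $\overline{m} \in \what{A}$; hence $\det(\gamma_m) = \overline{m} \in \what{A}$.

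For the converse, suppose $\det(\gamma) = a \in \what{A}$. Lifting $a$ to some $m \in \what{M}$, the composition $\gamma \gamma_m^{-1}$ is an IA-endomorphism of determinant $1$, so it suffices to show that every such endomorphism is conjugation by an element of $\what{M}'$. Using the parametrization of \S\ref{ss_IAEnds}, an IA-endomorphism corresponds via the Magnus embedding to data $\mu(\gamma(x_i)) = (t_i + \sigma_i, a_i)$ with $\sigma_i$ lying in the rank-$1$ submodule $\mu(\what{M}') \subset \what{T}$ generated by $\eta := (1-a_2) t_1 + (a_1-1) t_2$. Writing $\sigma_i = r_i \eta$, a direct expansion of the Bachmuth determinant yields
$$\det(\gamma) = 1 + (1-a_2) r_1 - (1-a_1) r_2,$$
so the determinant-$1$ condition becomes the syzygy $(1-a_2) r_1 = (1-a_1) r_2$ in $\Zhat\ps{\what{A}}$. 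I expect the main difficulty to lie in deducing from this syzygy that $r_i = (1-a_i) s$ for some $s \in \Zhat\ps{\what{A}}$; for this I would invoke the explicit decomposition of $\Zhat\ps{\what{A}}$ as a product of $p$-adic power series rings, in each of which $(1-a_1, 1-a_2)$ forms a regular sequence, so that Koszul exactness gives the desired common factor $s$. Given such $s$, the element $(s\eta, 1) \in \what{T} \rtimes \what{A}$ lies in $\mu(\what{M}')$ and so equals $\mu(m')$ for some $m' \in \what{M}'$; a direct conjugation computation in $\what{T} \rtimes \what{A}$ then verifies $\gamma \gamma_m^{-1} = \gamma_{m'}$, and hence $\gamma = \gamma_{m' m}$ is inner.
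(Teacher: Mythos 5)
Your proof is correct, and the inner-automorphism half is essentially the paper's own argument: the paper likewise reduces, after composing with an inner automorphism, to the case $\det(\gamma_r)=1$, reads off the syzygy $r_1(1-a_2)+r_2(a_1-1)=0$, and uses the coprimality of $a_1-1,\,a_2-1$ from Proposition \ref{hard_prop} (equivalently your Koszul/regular-sequence argument, which the paper records in the remark after Corollary \ref{cor_image_of_mu}) to exhibit the endomorphism as conjugation by an explicit element of $\what{M}'$; your observation that $\det(\gamma_m)=\ol{m}$ replaces the explicit matrix computation of Example \ref{ex_conjugation} but has the same content. Where you genuinely diverge is the automorphism criterion: the paper shows that $\gamma$ is an automorphism of $\what{M}$ if and only if $\beta(\gamma)$ is an automorphism of $\what{T}$, by checking $\varphi\circ\beta(\gamma_r)=\varphi$ and invoking the characterization of $\mu(\what{M})$ as the kernel of the crossed homomorphism $D$ (Example \ref{ex_automorphism}, Corollary \ref{cor_image_of_mu}), whereas you argue entirely inside $\what{M}$ via the extension $1\to\what{M}'\to\what{M}\to\what{A}\to 1$, the fact that $\what{M}'$ is characteristic, and a five-lemma chase, using freeness of $\what{M}'$ to translate invertibility of $\gamma|_{\what{M}'}$ into $\det(\gamma)\in\Zhat\ps{\what{A}}^\times$; this is a bit more elementary since it bypasses Corollary \ref{cor_image_of_mu}, at the small price of noting that a continuous bijective endomorphism of a profinite group is automatically an automorphism (compactness). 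Two points worth making explicit in a final write-up: your claim that the Bachmuth matrix specializes under the augmentation to the abelianized action is most cleanly justified from the explicit formula (\ref{eq_matrix_of_gamma_r}), which gives $\epsilon(\det(\gamma_r))=1$ directly; and in coordinates where one of $1-a_1,\,1-a_2$ is a unit the phrase ``regular sequence'' is a slight abuse, though the divisibility statement you need there is immediate and is exactly the coprimality assertion of Proposition \ref{hard_prop}.
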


Thus, we find that $\Out(\what{M})$ is an extension of $\GL(\Ahat)$ by the rather large abelian group $\Zhat\ps{\what{A}}^{\times'}/\what{A}$. By extending $\det : \IAut(\what{M})\rightarrow\Zhat\ps{\what{A}}^\times$ to a continuous crossed homomorphism $\det : \Aut(\what{M})\rightarrow\Zhat\ps{\what{A}}^\times$, we show that the image of $\SL(A)\hookrightarrow\Out(\what{M})$ has determinants in $\what{A}$. By continuity, the same is true of (the image of) $\what{\SL(A)}$, and the same is true of its congruence kernel $\bigcap_{n\ge 1}\ol{\Gamma(n)}$, which consists of (outer) IA-automorphisms, which by the above theorem implies that the image of the congruence kernel in $\Out(\what{M})$ is trivial. This proves

\begin{thm*}[\ref{thm_metabelian_congruence}] The image of $\what{\SL(A)} \cong \what{\SL_2(\ZZ)}$ in $\Out(\what{M})$ is $\SL(\Ahat)\cong\SL_2(\Zhat)$ (that is, $\cC = \meta$ is geometrically congruence) \footnote{After writing up our proof, the first author became aware of a result of Ben-Ezra \cite{BenEzra16} from 2016 from which one may also deduce this result.}
\end{thm*}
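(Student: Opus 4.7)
My strategy is to bundle the determinant theory of Propositions~\ref{prop_det} and~\ref{prop_IA_is_inner} into a single continuous crossed homomorphism $\bar\mu:\Out(\what{M})\to\Zhat\ps{\what{A}}^\times/\what{A}$, and then to show that its pullback along the natural map $\Phi:\what{\SL(A)}\to\Out(\what{M})$ is identically trivial. Granting this, Proposition~\ref{prop_IA_is_inner} says that every IA-automorphism with Bachmuth determinant in $\what{A}$ is inner; and since $\ab_*\circ\Phi$ equals the standard surjection $\what{\SL(A)}\twoheadrightarrow\SL(\Ahat)$, the congruence kernel $K:=\ker(\what{\SL(A)}\to\SL(\Ahat))$ then lands in $\IAut(\what{M})$ with trivial image in $\Out(\what{M})$, so that $\Phi$ factors through $\SL(\Ahat)\hookrightarrow\Out(\what{M})$ with image precisely $\SL(\Ahat)$.

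To construct $\bar\mu$, note that the continuous crossed homomorphism $\det:\Aut(\what{M})\to\Zhat\ps{\what{A}}^\times$ sends $\Inn(\what{M})$ into $\what{A}$ by Proposition~\ref{prop_IA_is_inner}, and that $\Inn(\what{M})$ acts trivially on $\Zhat\ps{\what{A}}^\times/\what{A}$ (since the $\Aut(\what{M})$-action factors through $\GL(\Ahat)$, which preserves $\what{A}$). Hence reduction modulo $\what{A}$ descends $\det$ to a continuous crossed homomorphism $\bar\mu$ on $\Out(\what{M})$, where $\Out(\what{M})$ acts on $\Zhat\ps{\what{A}}^\times/\what{A}$ via $\ab_*:\Out(\what{M})\to\GL(\Ahat)$.

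To prove $\bar\mu\circ\Phi\equiv 0$ it suffices, by the crossed-homomorphism property and continuity, to verify vanishing on a generating set of the discrete dense subgroup $\SL(A)\subset\what{\SL(A)}$. By Nielsen's theorem, $\SL(A)$ is generated by the images of the two Nielsen transformations $T:(x_1,x_2)\mapsto(x_1x_2,x_2)$ and $S:(x_1,x_2)\mapsto(x_2,x_1^{-1})$ of $F_2$, and these descend to automorphisms of $\what{M}$. The commutator subgroup $\what{M}'$ is a rank-one free $\Zhat\ps{\what{A}}$-module generated by $c_0:=[x_1,x_2]$, and a direct computation --- using $[x_2,x_1^{-1}]=[x_1,x_2]^{x_1^{-1}}$ in the metabelian group $\what{M}$ --- yields $T(c_0)=c_0$ and $S(c_0)=a_1^{-1}c_0$. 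Through the crossed-homomorphism extension of the Bachmuth determinant to $\Aut(\what{M})$, these scalars translate into $\det(T),\det(S)\in\what{A}$, so $\bar\mu\circ\Phi$ vanishes on the Nielsen generators, hence on $\SL(A)$, hence on all of $\what{\SL(A)}$.

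I expect the main subtlety to lie in the final computational step --- relating the scalar by which a non-IA automorphism multiplies the rank-one module $\what{M}'$ to the Bachmuth determinant, which was originally defined only on IA-endomorphisms. This requires carefully unwinding how the crossed-homomorphism extension of $\det$ from $\IAut(\what{M})$ to $\Aut(\what{M})$ interacts with the $\Zhat\ps{\what{A}}$-semi-linear action of the Nielsen generators on $\what{T}\supset\mu(\what{M}')$. Once this link is correctly installed, the remainder is a density-and-continuity argument combined with the structural Propositions~\ref{prop_det} and~\ref{prop_IA_is_inner}.
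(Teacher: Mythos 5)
Your proposal is correct and follows essentially the same route as the paper: the descended crossed homomorphism $\bar\mu$ is exactly the paper's reduced generalized determinant $\ol{\det}$, your evaluation on the Nielsen generators reproduces the computation of Example \ref{ex_SL2Z} ($\det_c(\gamma_E)=a_1^{-1}$, $\det_c(\gamma_T)=a_2$), and the final step of killing the congruence kernel via Proposition \ref{prop_IA_is_inner} is precisely the paper's argument in Theorem \ref{thm_metabelian_congruence}. The only cosmetic difference is that the paper works upstairs with $\what{\SAut(F)}$ and the continuous crossed homomorphism $\det_c$ on $\Aut(\what{M})$ rather than with the descended map on $\Out(\what{M})$, and the ``subtlety'' you flag is already resolved by the paper's Definition \ref{def_determinant}, which \emph{defines} the generalized determinant of an arbitrary (not necessarily IA) automorphism as the scalar by which it acts on the rank-one module $\what{M}'$.
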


\sgap

Next, using the fact that the Weil pairing and the ``branch cycle argument'' describes the Galois action on both $\pi_1^\et(E^\circ_{\ol{K}})^\meta\cong\what{M}$ and its derived subgroup in terms of the cyclotomic character $\chi : \Gamma_K\rightarrow\Zhat^\times$, we find that the Galois image in $\Out(\what{M})$ is always contained in the subgroup $\Out^b(\what{M})$ of ``braid-like outer automorphisms'' (c.f. \ref{def_braid_like_automorphisms}), which maps isomorphically onto $\GL(\Ahat)$:

\begin{thm*}[\ref{thm_split}, \ref{cor_M1_monodromy}] The natural exact sequence induced by abelianization $\what{M}\rightarrow \Ahat$
$$1\rightarrow\Zhat\ps{\what{A}}^{\times'}/\what{A}\rightarrow\Out(\what{M})\stackrel{\ab_*}{\rightarrow}\GL(\Ahat)\rightarrow 1$$
is split by a section $\Out(\what{M})\stackrel{s}{\leftarrow}\GL(\Ahat)$ with image the subgroup $\Out^b(\what{M})$ of braid-like outer automorphisms. Furthermore, for any elliptic curve $E/K$, the image of $\Gamma_K$ under $\rho_{E^\circ/K}^\meta$ is contained in $\Out^b(\what{M})$. If $K = \QQ$, then the image $\rho_{E^\circ/K}^\meta(\pi_1^\et(\mM(1)_\QQ))$ is precisely $s(\GL(\Ahat)) = \Out^b(\what{M}) \cong \GL_2(\Zhat)$.
\end{thm*}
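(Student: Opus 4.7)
The plan is to split the theorem into three claims: first, construct a section $s : \GL(\Ahat) \to \Out(\what{M})$ of the given exact sequence and identify its image with $\Out^b(\what{M})$; second, show that $\rho_{E^\circ/K}^\meta(\Gamma_K) \subset \Out^b(\what{M})$; third, upgrade this containment to equality when $K = \QQ$. Throughout, the basic tools are the Magnus embedding $\mu : \what{M} \hookrightarrow \what{T} \rtimes \what{A}$ together with its universality, and Proposition \ref{prop_IA_is_inner} characterizing inner IA-automorphisms by their determinants.

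To construct $s$, given $\alpha \in \GL(\Ahat)$, I would first extend $\alpha$ canonically to a continuous ring automorphism of $\Zhat\ps{\what{A}}$, then to a semilinear self-map of $\what{T} \cong \Zhat\ps{\what{A}}^2$, normalized so that it scales the commutator $\mu([x_1,x_2]) = ((1-a_2)t_1+(a_1-1)t_2, 1)$ by $\det(\alpha)$. This yields an automorphism of $\what{T} \rtimes \what{A}$ restricting to $\mu(\what{M})$, hence a lift $\tilde{\alpha} \in \Aut(\what{M})$ covering $\alpha$. Any two such lifts differ by an IA-automorphism whose determinant lies in $\what{A}$, hence by Proposition \ref{prop_IA_is_inner} by an inner automorphism; so the outer class $s(\alpha)$ is well-defined, and $s$ is visibly a homomorphism. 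By the braid-like normalization, $s(\GL(\Ahat)) = \Out^b(\what{M})$, which splits the sequence.

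For the Galois containment, the key geometric input is that the conjugacy class of the commutator $[x_1,x_2]$ in $\pi_1^\tp(E^\circ(\CC))$ is the class of a loop around the puncture $O$, so its image in $\what{M}$ topologically generates (up to conjugation) the inertia at $O$. The branch cycle argument applied to tame inertia gives $\sigma([x_1,x_2]) \sim [x_1,x_2]^{\chi(\sigma)}$ for $\sigma \in \Gamma_K$, where $\chi$ is the cyclotomic character. On $\Ahat$, the action of $\sigma$ is $\rho_{E/K}(\sigma)$, whose determinant is $\chi(\sigma)$ by the Weil pairing. These two compatibilities are precisely the braid-like condition, placing $\rho_{E^\circ/K}^\meta(\sigma)$ in $\Out^b(\what{M})$. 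When $K = \QQ$, the decomposition $\pi_1^\et(\mM(1)_\QQ) = \what{\SL(H_1 E)} \rtimes \Gamma_\QQ$ gives two contributions: Theorem \ref{thm_metabelian_congruence} yields $\rho^\meta(\what{\SL(H_1 E)}) = \SL(\Ahat)$ sitting inside $\Out^b(\what{M})$, while $\det$ on $\rho^\meta(\Gamma_\QQ)$ is the cyclotomic character, surjective onto $\Zhat^\times$ by Kronecker--Weber. Combining, the image surjects onto $\GL(\Ahat)$ and, being contained in $\Out^b(\what{M}) \cong \GL(\Ahat)$, must equal it.

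The main obstacle will be the construction of $s$: one must verify that a braid-like lift of $\alpha$ actually exists in $\Aut(\what{M})$ (not merely in $\Aut(\what{T} \rtimes \what{A})$), and that the resulting outer class is independent of the choices made. Existence reduces to direct computation using the explicit parametrization of $\IAEnd(\what{M})$ developed in \S\ref{ss_IAEnds}; well-definedness is the point where Proposition \ref{prop_IA_is_inner} is indispensable, since a priori two lifts differ only by an IA-automorphism, and only the determinant criterion identifies this difference as inner. Once the section is in place, the arithmetic components (branch cycle, Weil pairing, Kronecker--Weber surjectivity) fit together essentially mechanically.
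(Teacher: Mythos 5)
Your handling of the two arithmetic statements matches the paper: the branch cycle argument on the inertia generator $[x_1,x_2]$ gives $\ol{\det}(\rho^\meta_{E^\circ/K}(\sigma)) = \chi(\sigma)$ while the Weil pairing gives $\det(\rho_{E/K}(\sigma)) = \chi(\sigma)$, which is exactly the braid-like condition (this is Proposition \ref{prop_braid_like}); and for $K=\QQ$ the combination of Theorem \ref{thm_metabelian_congruence} with surjectivity of the cyclotomic character forces the image to be all of $\Out^b(\what{M})$, as in Corollary \ref{cor_M1_monodromy}. The genuine gap is in your construction of the section $s$. A semilinear self-map $\beta$ of $\what{T}$ covering $\alpha\in\GL(\Ahat)$ carries $\mu(\what{M})$ into itself only if $\varphi(\beta(t_i)) = \alpha(a_i)-1$ (Corollary \ref{cor_image_of_mu}), and once these constraints are imposed the only remaining freedom is to add to each $\beta(t_i)$ a multiple of $\mu([x_1,x_2])$, i.e.\ to precompose with elements of $\IAEnd(\what{M})$. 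Composing with $\gamma_r$ multiplies the commutator-scaling by $\det(\gamma_r)$, which by Proposition \ref{prop_det} ranges exactly over the elements of augmentation $1$; hence the scalings attainable over a fixed $\alpha$ form a single coset of $\ker\epsilon$, and your ``normalization to $\det(\alpha)$'' is possible if and only if the scaling of one (hence every) endomorphism covering $\alpha$ already has augmentation equal to $\det(\alpha)$. That compatibility is not supplied by anything you invoke: it is precisely the paper's Lemma \ref{lemma_dc_is_detab}, proved there from the explicit automorphisms of Example \ref{ex_SL2Z} and Lemma \ref{lemma_diagonal_automorphisms} together with continuity. In particular your claim that ``existence reduces to direct computation using the explicit parametrization of $\IAEnd(\what{M})$'' cannot be right as stated, since that parametrization only describes endomorphisms inducing the identity on $\Ahat$ and therefore cannot by itself produce, or normalize, lifts of a general $\alpha$.

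To close the gap you must either prove the augmentation identity $\epsilon(\ol{\det}(\gamma)) = \det(\gamma^\ab)$ directly (for instance by a computation of linear terms in the distinguished coordinate of Proposition \ref{hard_prop}), or follow the paper's route: use surjectivity of $\ab_*:\Out(\what{M})\to\GL(\Ahat)$ and correct an arbitrary preimage of $\alpha$ by the unique $\IOut(\what{M})$-class with prescribed reduced determinant furnished by Proposition \ref{prop_IA_is_inner}, as in Proposition \ref{prop_braid_likes_give_splitting}. A smaller but real omission in the same step: having arranged a unit scaling, you still need to argue that the induced endomorphism of $\what{M}$ is an automorphism (e.g.\ a unit scaling forces the image to contain $\what{M}'$, hence all of $\what{M}$, and then Hopficity applies); asserting that $\beta$ ``yields an automorphism of $\what{T}\rtimes\what{A}$ restricting to $\mu(\what{M})$'' begs exactly this question. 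Your well-definedness and homomorphism arguments for $s$, resting on Proposition \ref{prop_IA_is_inner} and the crossed-homomorphism property of $\ol{\det}$, are correct once existence of a braid-like lift is in place, and they mirror the uniqueness half of the paper's Proposition \ref{prop_braid_likes_give_splitting}.
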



\sgap

This implies that the representation $\rho_{E^\circ/K}^\meta|_{\Gamma_K} : \Gamma_K\rightarrow\Out(\pi_1^\et(E^\circ_{\ol{K}})^\meta)$ carries no more information that the standard abelian Galois representation $\rho_{E/K}|_{\Gamma_K} : \Gamma_K\rightarrow\Aut(\pi_1^\et(E_{\ol{K}})) \cong \GL_2(\Zhat)$, generalizing a result of Davis (c.f. \cite{Davis13}, Theorem 3.3), whose proof in her case relied on showing that all IA-automorphisms were inner.

\sgap

Lastly, to obtain the homogeneous decomposition of $\mM(G)_\QQ$, we prove a rather surprising group-theoretic result:

\begin{thm*}[\ref{thm_IAEnds_are_normal}] Every automorphism in $\IAut(\what{M})$ leaves every open normal subgroup of $\what{M}$ stable.
\end{thm*}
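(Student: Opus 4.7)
The plan is as follows. Let $\gamma\in\IAut(\what M)$ with Bachmuth determinant $d=\det(\gamma)\in\Zhat\ps{\what A}^{\times'}$, let $N\triangleleft\what M$ be open, and set $K:=N\cap\what M'$. Since $\what M$ is metabelian, $\what M'$ is abelian and the conjugation action of $\what M$ on $\what M'$ factors through $\what A$; hence $K$, being normal in $\what M$ and contained in $\what M'$, is a $\Zhat\ps{\what A}$-submodule of $\what M'$. Because $\gamma|_{\what M'}$ is multiplication by the unit $d$ (by \S\ref{ss_bachmuth}), we have $\gamma(K)=K$, so $\gamma$ descends to an automorphism $\tilde\gamma$ of $\what M/K$. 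Under the Magnus embedding this $\tilde\gamma$ corresponds to $\bar L\oplus\id$ acting on $(\what T/\mu(K))\rtimes\what A$, where $\bar L$ is the reduction mod $\mu(K)$ of the $\Zhat\ps{\what A}$-linear endomorphism $L$ of $\what T$ attached to $\gamma$.

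Next I would describe the subgroup $N/K$ inside this semidirect product. The projection $N/K\to\what A$ is injective with image $A_N:=N\what M'/\what M'$, so $N/K$ is the graph of a unique cocycle $\xi\colon A_N\to\what T/\mu(K)$. The statement $\gamma(N)=N$ is then equivalent, after descent, to $\bar L\,\xi(a)=\xi(a)$ for every $a\in A_N$. Computing the conjugation action of $(t_i,a_i)=\mu(x_i)$ on $(\xi(a),a)$ inside the semidirect product, the normality of $N$ in $\what M$ translates into the pair of identities
\[
(a_i-1)\,\xi(a)\;=\;(a-1)\,t_i\qquad\text{in }\what T/\mu(K),\qquad i=1,2,\ a\in A_N.
\]

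The heart of the proof is to extract from these equations the correct algebraic constraint on $\xi$. Since $\what M'$ is free of rank $1$ over $\Zhat\ps{\what A}$ on $[x_1,x_2]$, there is a unique ideal $J\subset\Zhat\ps{\what A}$ with $\mu(K)=J\cdot\mu([x_1,x_2])$, and then $G':=\what M'/K\cong\Zhat\ps{\what A}/J$, so that $J=\Ann_{\Zhat\ps{\what A}}(G')$. Lifting $\xi(a)$ to $\alpha_1t_1+\alpha_2t_2\in\what T$, and using the explicit formula $\mu([x_1,x_2])=(1-a_2)t_1+(a_1-1)t_2$ together with the fact that $a_1-1$ and $a_2-1$ are non-zerodivisors in $\Zhat\ps{\what A}$, I would unravel the two identities above to conclude that both $\alpha_1,\alpha_2$ lie in $J$, together with the compatibility $(a_1-1)\alpha_1+(a_2-1)\alpha_2=a-1$ (which automatically lies in $J$ since $a\in A_N$ acts trivially on $G'$).

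For the conclusion I would invoke the parametrization of \S\ref{ss_IAEnds}: writing $\gamma(x_i)=x_i\cdot(r_i\cdot[x_1,x_2])$ with $r_i\in\Zhat\ps{\what A}$, one has $L(t_i)=t_i+a_ir_i\,\mu([x_1,x_2])$, so $\bar L(t_i)-t_i$ corresponds to the class of $a_ir_i$ in $G'\cong\Zhat\ps{\what A}/J$. Hence
\[
\bar L\,\xi(a)-\xi(a)\;\equiv\;\alpha_1a_1r_1+\alpha_2a_2r_2\pmod{J},
\]
which vanishes since $\alpha_1,\alpha_2\in J$. Thus $\bar L\,\xi(a)=\xi(a)$ for every $a\in A_N$, $\tilde\gamma$ preserves $N/K$, and $\gamma(N)=N$. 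The main obstacle is the third paragraph: converting the normality of $N$ in $\what M$ into the module-theoretic statement $\alpha_1,\alpha_2\in J$. Once this is in hand, the rest is a direct calculation inside $\Zhat\ps{\what A}/J$.
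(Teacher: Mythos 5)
Your argument is correct, and it follows the same basic mechanism as the paper's proof: set $K=N\cap\what{M}'$, identify it with an ideal $J\subset\Zhat\ps{\what{A}}$ via the freeness of $\what{M}'$ (the paper's $I_{\varphi_0}$), observe that IA-maps preserve $K$ and move elements of $\what{M}$ only by elements of $\what{M}'$, and then use that $J$ is an ideal to see that these displacements land in $K$. Where you genuinely diverge is in how normality of $N$ is converted into the algebraic condition. The paper first chooses generators $x_1,x_2$ adapted to $N$, so that $\Ker(\varphi_0)=\langle I_{\varphi_0},[x_1,x_2]^{-s_1}x_1^{d_1},[x_1,x_2]^{-s_2}x_2^{d_2}\rangle$, proves the commutation identity of Lemma \ref{lemma_deligne_trick}, and then establishes the stability criterion of Proposition \ref{prop_stability} ($\gamma_r$ stabilizes $K$ iff $r_2s_1(a_1-1),\,r_1s_2(a_2-1)\in I$), from which normality ($s_i(a_i-1)\in I$) gives the result. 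You instead describe $N/K$ as the graph of the cocycle $\xi$ on $A_N$ inside $(\what{T}/\mu(K))\rtimes\what{A}$ and compare $t_1$- and $t_2$-coefficients in the normality identities $(a_i-1)\xi(a)=(a-1)t_i$; since $a_1-1,a_2-1$ are non-zerodivisors (Proposition \ref{hard_prop}), this yields $\alpha_1,\alpha_2\in J$ for \emph{every} element of $N$, and the final eigen-computation $\bar{L}\xi(a)-\xi(a)\equiv\alpha_1a_1r_1+\alpha_2a_2r_2\ (\mathrm{mod}\ J)$ finishes it. Your packaging avoids the choice of adapted generators (implicitly a Smith-normal-form step over $\Zhat$) and subsumes both Lemma \ref{lemma_deligne_trick} and the normality conditions of Proposition \ref{prop_stability} into the single statement $\alpha_1,\alpha_2\in J$, at the cost of losing the explicit iff-criterion for stability of such subgroups that the paper's Proposition \ref{prop_stability} records; note also that the paper proves the statement for all of $\IAEnd(\what{M})$, and your argument extends verbatim (with $\gamma(K)\subseteq K$ and the conclusion $\gamma(N)\subseteq N$), since invertibility of $\det(\gamma)$ is never really needed.
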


In particular, automorphisms in $\IAut(\what{M})$ descend to every finite quotient. Using this, if $G$ is a finite 2-generated metabelian group, then one obtains that the transitive action of $\Out(\what{M})$ on $\Epi^\ext(\what{M},G)$ can be decomposed into the commuting actions of $\Out^b(\what{M})\cong\GL_2(\Zhat)$ on the source, and $\IAut(\what{M})/\Inn(\what{M})\rightarrow\Out(G)$ on the target. The homogeneous disjoint union decomposition of $\mM(G)_\QQ$ then follows from Galois theory.

\section{Notation and Conventions}
Given a group $N$, and a group $Q$ acting on $N$ on the left, we define the semidirect product $N\rtimes Q$ as the set $N\times Q$ with group operation:
$$(n_1,q_1)\cdot(n_2,q_2) := (n_1\cdot\la{q_1}n_2,q_1q_2)$$
In a group $G$, the commutator of two elements $a,b\in G$ is $[a,b] := aba^{-1}b^{-1}$. The subgroup generated by all such commutators is denoted $G'$.

\sgap

For two groups $A,B$,
$$\Epi^\ext(A,B) := \{\text{Surjective homomorphisms $A\rightarrow B$ modulo conjugation}\}$$
If $p$ is a prime number, let $\ZZ_p$ denote the ring of $p$-adic integers, or equivalently the ring of Witt vectors of $\FF_p$. If $q$ is a prime power, then $\ZZ_q$ denotes the unique unramified extension of $\ZZ_p$ with residue field $\FF_q$, or equivalently the ring of Witt vectors of $\FF_q$.
\sgap

All fundamental groups are by default \'{e}tale fundamental groups.

\sgap

All homomorphisms between profinite groups are supposed continuous - sometimes this will require justification, and generators of profinite groups are topological generators.

\section{Preliminaries on profinite wreath product embeddings}
In \S\ref{ss_general_setting}, we begin by giving profinite analogs of classical wreath product embeddings of a profinite group $G$ (c.f. \cite{Rotman95} Theorem 7.37 and \cite{Rem79} \S2), and defining the notion of a ``universal split extension containing $G$''. The goal of the section is to describe an embedding of the rank 2 free profinite metabelian group $\what{M}$, as an extension of its abelianization $\what{A}$, inside the split extension $\Zhat\ps{\what{A}}^2\rtimes\what{A}$, and show that this embedding is the ``universal split extension containing $G$'' (c.f. Proposition \ref{prop_universal}).

\subsection{The general setting}\label{ss_general_setting}

Let $G$ be a finitely generated profinite group which is an extension of some $Q$ by $N$:
$$1\rightarrow N\rightarrow G\rightarrow Q\rightarrow 1$$
Let $\CF(Q,N)$ denote the group of continuous functions $Q\rightarrow N$, with group operation given by pointwise multiplication in $N$. For any open normal subgroup $U\le N$, let
$$\CF_U = \CF_U(Q,N) := \{f\in\CF(Q,N): f(Q)\subset U\}$$
We make $\CF(Q,N)$ into a topological group by using the subgroups $\CF_U$ as a basis of open neighborhoods of the identity. We have a continuous left $Q$-action on such functions given by
$$(\la{\alpha}f)(q) := f(q\alpha)\qquad \text{for all }q,\alpha\in Q\text{.}$$
If $Q$ is finite, then the topology on $\CF(Q,N)$ is visibly profinite, but if $Q$ is infinite, the open subgroups $CF_U$ are infinite index, and hence the resulting topology is not compact. However, the closed finitely generated $Q$-operator subgroups are still profinite:

\begin{prop}\label{prop_remeslennikov} If $K\le CF(Q,N)$ is finitely generated as a closed $Q$-operator subgroup, then $K$ has a collection $\{K_\alpha\}$ of finite index open normal subgroups each stable under $Q$ with $\bigcap_\alpha K_\alpha = 1$. In particular, $K$ is profinite.
\end{prop}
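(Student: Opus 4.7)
Take $K_U := K \cap \CF_U(Q,N)$ for $U$ ranging over open normal subgroups of $N$, and prove the family $\{K_U\}$ has the advertised properties. Each $\CF_U$ is by definition open in $\CF(Q,N)$, is normal in $\CF(Q,N)$ (pointwise conjugation preserves the condition ``values in $U$'' because $U\trianglelefteq N$), and is $Q$-stable (since $(\leftexp{\alpha}f)(q) = f(q\alpha)$ takes values in $U$ as soon as $f$ does). Hence each $K_U$ is open, normal, and $Q$-stable in $K$. The intersection $\bigcap_U K_U$ is trivial since $\bigcap_U U = \{1\}$ in the profinite group $N$. The only thing requiring genuine work is to prove that $[K:K_U]<\infty$.

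\textbf{Main step: finiteness of $K/K_U$.} Let $f_1,\dots,f_n \in K$ topologically generate $K$ as a closed $Q$-operator subgroup. Their reductions $\bar f_i\colon Q\to N/U$ are continuous maps to a finite discrete group, so each factors through some finite quotient of the profinite group $Q$; I may take the quotient to be by an open normal subgroup $V_i\trianglelefteq Q$. Setting $V := \bigcap_i V_i$ (open and normal in $Q$), every $\bar f_i$ factors through $Q/V$. Because $V$ is normal in $Q$, the subset $\CF(Q/V,\,N/U) \subseteq \CF(Q,N/U)$ is a finite $Q$-stable subgroup: if $f$ is right-$V$-invariant then so is $\leftexp{\alpha}f$, using $\alpha^{-1}V\alpha = V$. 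So the $Q$-operator subgroup generated by $\bar f_1,\dots,\bar f_n$ sits inside this finite group, is therefore automatically closed and finite, and coincides with the image of $K$ in $\CF(Q,N/U)$. This image is isomorphic to $K/K_U$, giving finite index.

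\textbf{Profiniteness and main obstacle.} For the final assertion, I would check that $\CF(Q,N)$ is complete in its uniform topology: given a Cauchy net $(f_\alpha)$, the pointwise limits $f(q) := \lim_\alpha f_\alpha(q)$ exist in the complete profinite group $N$, and the uniform Cauchy condition forces $f$ to agree with some $f_\alpha$ modulo each open $U\le N$, whence continuity of $f$ follows from that of $f_\alpha$. Closedness of $K$ then gives completeness of $K$, which together with the base $\{K_U\}$ of finite-index open normal subgroups of trivial intersection makes the natural map $K\to\varprojlim_U K/K_U$ a topological isomorphism onto a profinite group. The real obstacle is the middle step: it is the finite generation of $K$ specifically as a \emph{$Q$-operator} subgroup (and not merely as a topological group) that collapses each $K/K_U$ down to a finite amount of locally constant data — a compression that would fail in general for infinite $Q$ without the $Q$-action.
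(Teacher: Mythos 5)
Your proof is correct, and there is nothing in the paper to compare it against: the paper's ``proof'' of this proposition is just a pointer to Remeslennikov's Proposition 10, so you have supplied a self-contained argument where the paper defers to the literature (and your reduction-mod-$U$ scheme is, in substance, the standard one used there). Two small points are worth making explicit if this were to be written up. First, in the main step, the passage from ``the $Q$-operator subgroup generated by the $\bar f_i$ is finite'' to ``it coincides with the image of $K$'' needs one line: writing $H$ for the abstract $Q$-operator subgroup generated by $f_1,\dots,f_n$ and $\pi$ for reduction modulo $U$, the set $\pi^{-1}(\pi(H)) = H\cdot\CF_U$ is a finite union of cosets of the open subgroup $\CF_U$, hence closed, so it contains $K=\overline{H}$ and therefore $\pi(K)=\pi(H)$; equivalently, $\CF(Q,N/U)$ is discrete in the analogous topology and $\pi$ is continuous, so $\pi(\overline H)\subseteq\overline{\pi(H)}=\pi(H)$. (Similarly, the fact that a continuous map from the profinite group $Q$ to a finite discrete set is constant on cosets of some open \emph{normal} $V\le Q$ is a one-line compactness argument you are implicitly using.) Second, you are right that the ``in particular'' does require the completeness discussion and not merely the existence of the family $\{K_U\}$ (the group $\ZZ$ with its profinite topology has such a family without being profinite), and your completeness argument is sound: pointwise limits exist in $N$, the uniform Cauchy condition gives agreement with some $f_\alpha$ modulo every open $U$ (using that $U$ is open hence closed), continuity of the limit follows since $N$ is the inverse limit of the $N/U$, and the left/right uniformity issue is vacuous because the $\CF_U$ are normal; closedness of $K$ then yields completeness, and $K\rightiso\varprojlim_U K/K_U$ is profinite.
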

\begin{proof} See \cite{Rem79} Proposition 10.
\end{proof}

Fix a continuous set-theoretic section $\sigma : Q\rightarrow G$. For $g\in G$, we define the continuous function
$$f_g\in\CF(Q,N)\qquad\text{given by}\qquad f_g(q) := \sigma(q)\cdot g\cdot \sigma(q\ol{g})^{-1}$$
where $\ol{g}$ denotes the image of $g\in G$ in $Q$. Let $X\subset G$ be a finite generating set, and let $\CF_X$ be the closed $Q$-operator subgroup of $\CF(Q,N)$ generated by the set $\{f_x : x\in X\}$. Then $\CF_X$ is a profinite group with continuous $Q$-action, so $\CF_X\rtimes Q$ is profinite.
\begin{thm}[c.f. \cite{Rem79} \S2]\label{thm_profinite_embedding} The map $i_\sigma : G\rightarrow \CF_X\rtimes Q$ given by $i_\sigma(g) = (f_g,\ol{g})$ is a continuous injective homomorphism.
\end{thm}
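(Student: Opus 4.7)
My plan is to verify the three claims — homomorphism, injectivity, continuity — and then confirm that the image actually lands in $\CF_X \rtimes Q$ rather than merely in $\CF(Q,N) \rtimes Q$.

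\textbf{Homomorphism.} This is a direct computation from the definitions. The multiplication rule in the semidirect product gives
\[
(f_{g_1}, \overline{g_1}) \cdot (f_{g_2}, \overline{g_2}) = \bigl(f_{g_1} \cdot \la{\overline{g_1}}f_{g_2},\; \overline{g_1 g_2}\bigr),
\]
so I need to check that $f_{g_1 g_2}(q) = f_{g_1}(q)\cdot f_{g_2}(q\overline{g_1})$ for all $q \in Q$. Expanding both sides using $f_g(q) = \sigma(q)\, g\, \sigma(q\overline{g})^{-1}$, the middle factors $\sigma(q\overline{g_1})^{-1}\sigma(q\overline{g_1})$ telescope and both sides become $\sigma(q)\, g_1 g_2\, \sigma(q\overline{g_1 g_2})^{-1}$.

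\textbf{Injectivity.} If $i_\sigma(g) = (1,1)$, then $\overline{g} = 1$ forces $g \in N$, and then $f_g(q) = \sigma(q)\, g\, \sigma(q)^{-1}$ is identically trivial. Evaluating at a single point (after normalizing $\sigma$ so that $\sigma(1)=1$, which we may always arrange) gives $g = 1$.

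\textbf{Continuity, and image inside $\CF_X \rtimes Q$.} This is the step that requires actual work; I expect it to be the main obstacle, because the topology on $\CF(Q,N)$ is not the product topology when $Q$ is infinite — one needs \emph{uniform} closeness of $f_{g_n}$ to $f_g$ on all of $Q$. The map $\Phi : Q \times G \to N$ defined by $\Phi(q,g) = \sigma(q)\, g\, \sigma(q\overline{g})^{-1}$ is continuous (since $\sigma$ is continuous and multiplication/inversion in $G$ are continuous), and $Q$ is compact. By the standard profinite-group version of Heine–Cantor (continuous maps from a compact space into a profinite group are uniformly continuous with respect to the left-uniform structure), for any open normal $U \trianglelefteq N$ there is an open neighborhood $V$ of the identity in $G$ such that $g_1 g_2^{-1} \in V$ implies $\Phi(q,g_1)\Phi(q,g_2)^{-1} \in U$ uniformly in $q$. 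This exactly says $f_{g_1} f_{g_2}^{-1} \in \CF_U$, which is the content of continuity of $g \mapsto f_g$.

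Finally, for the image: the algebraic subgroup of $\CF(Q,N)\rtimes Q$ generated by $\{(f_x,\overline{x}) : x\in X\}$ lies in $\CF_X \rtimes Q$ by inspection of the multiplication and inversion formulas (products and $Q$-conjugates of the $f_x$ stay inside the $Q$-operator subgroup they generate). Since $X$ generates $G$ topologically and $i_\sigma$ is continuous, taking closures puts $i_\sigma(G)$ inside the closure of that algebraic subgroup, which is contained in $\CF_X \rtimes Q$ because $\CF_X$ is by definition closed. Combining the three verifications gives the theorem.
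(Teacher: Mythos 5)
Your proof is correct, but it takes a genuinely different route from the paper at the one point where real work is needed, namely continuity. The paper deduces continuity from Proposition \ref{prop_remeslennikov}: each $Q$-stable finite-index open normal subgroup $K_\alpha\le\CF_X$ gives a finite-index subgroup $K_\alpha\rtimes Q\le\CF_X\rtimes Q$, whose preimage under $i_\sigma$ is finite index in $G$ and hence open \emph{because $G$ is topologically finitely generated} — so the paper's continuity rests on Remeslennikov's proposition together with the nontrivial fact that finite-index subgroups of finitely generated profinite groups are open. You instead prove continuity of $g\mapsto f_g$ into all of $\CF(Q,N)$ directly, via uniform continuity (Heine--Cantor on the compact space $Q\times G$) of $(q,g)\mapsto\sigma(q)g\sigma(q\ol{g})^{-1}$; specializing the entourage to equal $q$-coordinates gives exactly the uniform-in-$q$ statement $f_{g_1}f_{g_2}^{-1}\in\CF_U$, which is what the $\CF_U$-topology demands. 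This is more elementary, needs neither Proposition \ref{prop_remeslennikov} nor finite generation of $G$ for this step, and in addition you explicitly check that the image lies in $\CF_X\rtimes Q$ (the algebraic subgroup generated by the $(f_x,\ol{x})$ stays in the $Q$-operator subgroup, and $\CF_X$ is closed while $\langle X\rangle$ is dense) — a point the paper leaves implicit. What the paper's route buys is economy, since Proposition \ref{prop_remeslennikov} is needed anyway to know $\CF_X\rtimes Q$ is profinite; what yours buys is a stronger and more self-contained continuity statement. One small remark on injectivity: the normalization $\sigma(1)=1$ is unnecessary (and strictly speaking one should not replace the given $\sigma$, since $i_\sigma$ depends on it): for $g\in N$ one has $f_g(q)=\sigma(q)g\sigma(q)^{-1}$, so evaluating at any single $q$ and conjugating already forces $g=1$, exactly as in the paper.
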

\begin{proof} We have
\begin{eqnarray*}
f_g(q)\la{\ol{g}}f_h(q) & = & f_g(q)f_h(q\ol{g}) \\
 & = & \sigma(q)\cdot g\cdot\sigma(q\ol{g})^{-1}\cdot\sigma(q\ol{g})\cdot h\cdot\sigma(q\ol{g}\ol{h})^{-1} \\
 & = & \sigma(q)\cdot gh\cdot\sigma(q\ol{gh})^{-1} \\
 & = & f_{gh}(q)
\end{eqnarray*}
The above calculation shows that $i_\sigma$ is a homomorphism, whose kernel is certainly contained in $N$. On the other hand, for $n\in N$, we have $f_n(q) = \sigma(q) n\sigma(q)^{-1}$ which is never the identity $\underline{1_G}\in\CF(Q,N)$ if $n\ne 1_G$, and hence $i_\sigma$ is injective. For continuity, it suffices to check that the ``coordinate maps'' $G\rightarrow Q,\; g\mapsto \ol{g}$ and $G\rightarrow \CF_X,\; g\mapsto f_g$ are continuous. The first is continuous by hypothesis. For the second, for each open normal subgroup $K_\alpha\le\CF_X$ stable under $Q$ as in Proposition \ref{prop_remeslennikov}, $K_\alpha\rtimes Q$ is finite index inside $\CF_X\rtimes Q$, and hence its preimage is finite index, and hence is open since $G$ is finitely generated.
\end{proof}

\begin{remark} This embedding $i_\sigma$ is a profinite analog of the classical wreath product embedding (c.f. \cite{Rotman95} 7.37) of $G$ into $N\wr Q = \left(\prod_{q\in Q}N\right)\rtimes Q$, where we view $\CF_X\subset\CF(Q,N)$ as a subgroup of $\prod_{q\in Q}N$.

\end{remark}




\sgap

For a profinite group $Q$, let $\Zhat\ps{Q}$ be the completed group algebra of $Q$, and let $\PMod(\Zhat\ps{Q})$ be the abelian category of profinite $\Zhat\ps{Q}$-modules, or what is the same, the category of profinite abelian groups with continuous $Q$-action. Let $\PAb$ be the category of profinite abelian groups, and $\EExt(Q,\PAb)$ the category of extensions of $Q$ by objects of $\PAb$. To be precise, the objects of $\EExt(Q,\PAb)$ are profinite groups $\what{G_1}$, equipped with a (continuous) surjection $\pi : \what{G_1}\twoheadrightarrow Q$ with abelian kernel, and morphisms are (continuous) homomorphisms respecting the projection onto $Q$.

\sgap

There is a natural fully faithful functor $\Xi : \PMod(\Zhat\ps{\what{A}})\rightarrow\EExt(Q,\PAb)$ given by $\Xi(N) := N\rtimes Q$. Thus, $\Xi$ sends a profinite abelian $\Zhat\ps{Q}$-module $N$ to the corresponding split extension $N\rtimes Q$.

\begin{defn} Define a functor $\Psi : \EExt(Q,\PAb)\rightarrow\PMod(\Zhat\ps{Q})$ as follows. Given an extension $G$ of $Q$, let $\{j_i : G\rightarrow N_i\rtimes Q\}_{i\in I}$ denote a set of representatives of all equivalence classes of morphisms in $\EExt(Q,\PAb)$ from $G$ to split extensions in $\EExt(Q,\PAb)$, where we say that two morphisms $G\rightarrow N_i\rtimes Q$ and $G\rightarrow N_j\rtimes Q$ are equivalent if there is an isomorphism of extensions $N_i\rtimes Q\rightiso N_j\rtimes Q$ compatible with the maps from $G$. We have the product map
$$\prod_{i\in I} j_i : G\rightarrow\left(\prod_{i\in I}N_i\right)\rtimes Q$$
Let $N_G^\univ\rtimes Q$ be the closed $Q$-operator subgroup of $\left(\prod_{i}N_i\right)\rtimes Q$ generated by the image of $G$ via $\prod_i j_i$, and define $\Psi(G) := N_{G}^\univ$. Thus, $\prod_i j_i$ gives us a natural map
$$\eta_{G} : G\longrightarrow \Psi(G)\rtimes Q = N_{G}^\univ\rtimes Q$$
\end{defn}

If $G\rightarrow Q$ is an epimorphism of finitely generated profinite groups, then by \ref{thm_profinite_embedding}, the embedding $i_\sigma : G\rightarrow \CF_X\rtimes Q$ appears as one of the $j_i$'s, and hence $\eta_G : G\rightarrow N_G^\univ\rtimes Q$ is injective. Thus, $\Psi(G)\rtimes Q$ is the ``universal split extension of $Q$ containing $G$'', and $\Psi(G)$ is the kernel of the extension. It follows from construction that $\Psi$ is the left adjoint of $\Xi$, with $\eta$ as the unit of the adjunction.

\begin{prop}\label{prop_profinite_adjunction} For every $G_1\in\EExt(Q,\PAb)$ and $N_2\in\PMod(\Zhat\ps{Q})$, there is a bijection
$$\Hom_{\Zhat\ps{Q}}(\Psi(G_1),N_2)\rightiso \Hom_Q(G_1,\Xi(N_2))$$
natural in both $G_1$ and $N_2$. In other words, $\Psi$ is the left adjoint of $\Xi$.

\sgap

In particular, the natural map $\eta_{G} : G\rightarrow \Psi(G)\rtimes Q = N_{G}^\univ\rtimes Q$ is characterized by the following universal property. For any $\Zhat\ps{Q}$-module $N$ and any morphism of extensions $h : G\rightarrow N\rtimes Q$, there is a unique $\Zhat\ps{Q}$-linear map $h_* : N_{G}^\univ\rightarrow N$ making the following diagram commute:

\[\begin{tikzcd}
G\arrow[r,"\eta_{G}"]\arrow[rd,"h"'] & N_{G}^\univ\rtimes Q\arrow[d,"\Xi(h_*)"]\\
& N\rtimes Q
\end{tikzcd}\]
\end{prop}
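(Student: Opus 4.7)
The plan is to prove the universal property of $\eta_G$ directly from the construction of $\Psi$; the adjunction bijection is then a formal consequence (with the map $\Hom_{\Zhat\ps{Q}}(\Psi(G_1),N_2)\to\Hom_Q(G_1,\Xi(N_2))$ given by $f\mapsto \Xi(f)\circ\eta_{G_1}$, and its inverse by $h\mapsto h_*$).

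For existence: fix a morphism of extensions $h:G\to N\rtimes Q$ with $N\in\PMod(\Zhat\ps{Q})$. By the definition of the indexing set $\{j_i:G\to N_i\rtimes Q\}_{i\in I}$ chosen in the construction of $\Psi(G)$, the pair $(N,h)$ is equivalent to some $(N_{i_0},j_{i_0})$, i.e. there is an isomorphism of extensions $\phi:N_{i_0}\rtimes Q\rightiso N\rtimes Q$ whose restriction to the kernels is a $\Zhat\ps{Q}$-linear isomorphism $\ol{\phi}:N_{i_0}\rightiso N$ satisfying $\phi\circ j_{i_0}=h$. The composite
$$\prod_{i\in I}N_i\xrightarrow{\pr_{i_0}}N_{i_0}\xrightarrow{\ol{\phi}}N$$
is continuous and $\Zhat\ps{Q}$-linear; restricting to the closed $Q$-operator subgroup $N_G^\univ\subset\prod_i N_i$ yields a continuous $\Zhat\ps{Q}$-linear map $h_*:N_G^\univ\to N$. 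By construction, evaluating $\Xi(h_*)\circ\eta_G$ on $g\in G$ projects $\prod_i j_i(g)$ to $j_{i_0}(g)$ and then applies $\ol{\phi}$, giving $h(g)$, so the triangle commutes.

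For uniqueness: any profinite abelian group carries a canonical continuous $\Zhat$-action, and a continuous $Q$-action by automorphisms extends uniquely to a continuous $\Zhat\ps{Q}$-module structure; consequently the closed $Q$-operator subgroup generated by a subset coincides with the closed $\Zhat\ps{Q}$-submodule it generates. Since $N_G^\univ$ is by construction topologically generated as a $Q$-operator subgroup by the image of $G$, any two continuous $\Zhat\ps{Q}$-linear maps $N_G^\univ\to N$ that agree on $\eta_G(G)$ must coincide. Naturality in $G_1$ and $N_2$ then follows formally: a morphism $G_1\to G_1'$ of extensions induces, by universality, a unique $\Zhat\ps{Q}$-linear map $\Psi(G_1)\to\Psi(G_1')$ making $\eta$ natural, and naturality in $N_2$ is immediate from the definition of the bijection by post-composition.

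The main subtlety to watch is the set-theoretic size issue in defining $\{j_i\}_{i\in I}$ — the class of all morphisms of $G$ into split extensions by profinite abelian $\Zhat\ps{Q}$-modules is a priori a proper class — but one can restrict to a set of representatives, for instance by bounding the cardinality of $N_i$ by that of the profinite completion of the free abelian profinite $\Zhat\ps{Q}$-module on the (finite) generating set of $G$, or equivalently by noting that every such $j_i$ factors through a quotient of the universal object built from the wreath product embedding of Theorem \ref{thm_profinite_embedding}. Once this is settled, the proof is essentially formal adjoint-functor bookkeeping.
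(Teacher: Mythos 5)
Your proof is correct and follows exactly the route the paper intends: the paper offers no written proof, asserting only that the adjunction ``follows from construction,'' and your existence argument (project to the representative $j_{i_0}$ equivalent to $h$ and compose with the induced $\Zhat\ps{Q}$-linear isomorphism) together with the uniqueness argument (the image of $G$ topologically generates $N_G^{\univ}\rtimes Q$ as a $Q$-operator subgroup, so $\Zhat\ps{Q}$-linear maps agreeing there coincide) is the natural unwinding of that construction. Your closing remark on the set-theoretic size of the index set $I$ is a reasonable piece of bookkeeping that the paper itself glosses over, and your proposed fix is sound.
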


\subsection{The Magnus embedding for the rank 2 free profinite metabelian group $\what{M}$}\label{ss_profinite_magnus}

Let $F := F_2$ be the free group on two generators, with abelianization $A := F/F' \cong \ZZ^2$. Then, $M := F/F''$ is a free metabelian group of rank 2 and is an extension of its abelianization $A$ by its derived subgroup $M' = F'/F''$. Since $M'$ is abelian, $M'$ has the natural structure of a $\ZZ[A]$-module, with $A$ acting by left conjugation:
$$a\cdot m := \tilde{a}m\tilde{a}^{-1} \quad\text{for $m\in M'$ and $a\in A$,}$$
where $\tilde{a}$ is any lift of $a$ to $M$.

\sgap

Let $\what{M}$ be the profinite completion of $M$, or equivalently the maximal pro-metabelian quotient of $\what{F}$, with abelianization $\what{A}$. Then $\what{M}$ is a free profinite metabelian group of rank 2, equipped with an embedding of a discrete dense subgroup $M\subset \what{M}$. Let $\mu_m$ denote the cyclic group of order $m$, written multiplicatively. Then the completed group algebra of $\what{A}$ is
$$\Zhat\ps{\what{A}} := \varprojlim_{n,m}(\ZZ/n)[\mu_m\times\mu_m] = \varprojlim_{n,m}(\ZZ/n)[a_1,a_2]/(a_1^m-1,a_2^m-1)$$

As in the discrete case, $\what{M}'$ admits a continuous left action of $\what{A}$ by conjugation, making $\what{M'}$ into a $\Zhat\ps{\what{A}}$-module. Writing the group operation of $\what{M}$ multiplicatively, we will denote the action of $r\in\Zhat\ps{\what{A}}$ on $m\in\what{M}'$ by exponentiation: $m^r$. Since the abstract group algebra $\Zhat[\what{A}]$ is dense inside $\Zhat\ps{\what{A}}$, this is the unique continuous action of $\Zhat\ps{\what{A}}$ on $\what{M}'$ satisfying:
$$m^{n_1\alpha_1 + n_2\alpha_2 + \cdots + n_k\alpha_k} = \prod_{i=1}^k \alpha_im^{n_i}\alpha_i^{-1}\qquad\text{for $k\in\NN, \; n_i\in\Zhat, \; \alpha_i\in\what{A}$}$$

\sgap

\begin{prop}\label{prop_universal} Let $x_1,x_2$ generate $\what{M}$ with images $a_1,a_2\in \what{A}$. Let $\what{T}$ be the free $\Zhat\ps{\what{A}}$-module on the basis $t_1,t_2$. Then, the embedding $\eta_{\what{M}} : \what{M}\rightarrow \Psi(\what{M})\rtimes\what{A}$ is uniquely isomorphic to the embedding
$$\mu : \what{M}\hookrightarrow \what{T}\rtimes \what{A}\qquad x_i\mapsto (t_i,a_i)$$
and satisfies the universal property of \ref{prop_profinite_adjunction}: For any profinite $\Zhat\ps{\what{A}}$-module $\what{N}$, and morphism $h : \what{M}\rightarrow\what{N}\rtimes\what{A}$, there exists a unique $\Zhat\ps{\what{A}}$-linear map $h_* : \what{T}\rightarrow\what{N}$ making the following diagram commute:
\[\begin{tikzcd}
\what{M}\arrow[r,"\eta_{\what{M}}"]\arrow[rd,"h"'] & \what{T}\rtimes \what{A}\arrow[d,"\Xi(h_*)"]\\
& \what{N}\rtimes\what{A}
\end{tikzcd}\]
\end{prop}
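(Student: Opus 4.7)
The plan is to verify directly that $\mu$ satisfies the universal property stated in Proposition \ref{prop_profinite_adjunction} for the adjunction $\Psi\dashv\Xi$, and then to conclude by uniqueness of adjoint objects that $\mu$ must be canonically isomorphic to $\eta_{\what{M}}$.

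First I would establish $\mu$ as a well-defined continuous homomorphism. The group $\what{T}\rtimes\what{A}$ is profinite and pro-metabelian: its derived subgroup lies in the abelian normal subgroup $\what{T}$. Since $\what{M}$ is the maximal pro-metabelian quotient of $\what{F}$, it is the free pro-metabelian profinite group of rank $2$, so any choice of two elements in a pro-metabelian profinite group extends uniquely to a continuous homomorphism from $\what{M}$. Applying this to $(t_1,a_1),(t_2,a_2)\in\what{T}\rtimes\what{A}$ produces $\mu$, and it respects the projections to $\what{A}$ by construction.

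Next I would verify the universal property. Given $h:\what{M}\rightarrow\what{N}\rtimes\what{A}$ in $\EExt(\what{A},\PAb)$, write $h(x_i)=(n_i,a_i)$ with $n_i\in\what{N}$. Since $\what{T}$ is the free profinite $\Zhat\ps{\what{A}}$-module on $t_1,t_2$, there is a unique continuous $\Zhat\ps{\what{A}}$-linear map $h_*:\what{T}\rightarrow\what{N}$ sending $t_i\mapsto n_i$. The two compositions $\Xi(h_*)\circ\mu$ and $h$ are continuous homomorphisms $\what{M}\rightarrow\what{N}\rtimes\what{A}$ that agree on the topological generating set $\{x_1,x_2\}$, and hence coincide on all of $\what{M}$. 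For uniqueness: any $h_*$ making the triangle commute is forced to send $t_i\mapsto n_i$, and $\Zhat\ps{\what{A}}$-linearity then determines it on all of $\what{T}$.

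By Proposition \ref{prop_profinite_adjunction}, the embedding $\eta_{\what{M}}$ satisfies the same universal property, so uniqueness of objects representing the value of a left adjoint produces a unique isomorphism of extensions of $\what{A}$ identifying $\mu$ with $\eta_{\what{M}}$. In particular, $\mu$ inherits injectivity from $\eta_{\what{M}}$, which is injective by Theorem \ref{thm_profinite_embedding}. The only conceptual points to unpack, in lieu of any genuine obstacle, are that $\what{M}$ really is free on $x_1,x_2$ in the pro-metabelian category so that $\mu$ is well-defined, and that $\what{T}\cong\Zhat\ps{\what{A}}^2$ enjoys the expected universal property as a free profinite $\Zhat\ps{\what{A}}$-module so that $h_*$ is well-defined and continuous.
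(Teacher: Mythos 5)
Your argument is correct and follows essentially the same route as the paper: one checks directly that $\mu$ satisfies the universal property of Proposition \ref{prop_profinite_adjunction} by sending $t_i\mapsto n_i$ where $h(x_i)=(n_i,a_i)$, and then identifies $\mu$ with $\eta_{\what{M}}$ by uniqueness of the universal object. Your additional remarks (freeness of $\what{M}$ on the generating pair, agreement of the two maps on topological generators, injectivity inherited from $\eta_{\what{M}}$ via Theorem \ref{thm_profinite_embedding}) merely spell out details the paper leaves implicit.
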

\begin{proof} It suffices to check that the map $\mu$ satisfies the universal property. For any morphism of extensions $\varphi : \what{M}\rightarrow \what{N}\rtimes \what{A}$, let $n_i\in \what{N}$ be defined by the property $\varphi(x_i) = (n_i,a_i)$. This determines a unique map $\what{T}\rightarrow \what{N}$ sending $t_i\mapsto n_i$, making the diagram of the proposition commute, showing that $\mu \cong \eta_{\what{M}}$.
\end{proof}

In the following we will let $\mu := \eta_{\what{M}}$ denote the canonical injection $\mu : \what{M}\hookrightarrow \Psi(\what{M})\rtimes Q$, which makes sense without any choice of generators of $\what{M}$. Sometimes, to be explicit, we will let $\what{T} := \Psi(\what{M})$, then by \ref{prop_universal}, $\Psi(\what{M})$ is a free $\Zhat\ps{\Ahat}$-module of rank 2, and for any choice of generators $x_1,x_2\in\what{M}$ with images $a_1,a_2\in\Ahat$, $t_1,t_2$ will denote the basis of $\what{T}$ satisfying $\mu(x_i) = (t_i,a_i)$.

\sgap

We may realize $\what{T}\rtimes \what{A}$ as the matrix group (where we think of $\what{A}$ as being written multiplicatively):
$$\what{T}\rtimes \what{A} \cong \ttmatrix{\what{A}}{\what{T}}{0}{1} := \left\{\ttmatrix{a}{t}{0}{1} : a\in \what{A},t\in \what{T}\right\}$$
In this language, $\mu$ is the profinite analog of the \emph{Magnus embedding} described in \cite{Bach65}, \S3.




\section{Some rank 2 pro-metabelian calculus}
\subsection{Basic properties of $\what{M}$}
Recall that we write the elements of $\what{A}$ multiplicatively, and $\what{T}$ additively, so the identity of $\what{T}\rtimes\what{A}$ is $(0,1)$. By the definition of $\what{M}$ as the profinite completion of $M$, we have canonical embeddings $M\subset \what{M}$, $A\subset\what{A}$.

\begin{prop}\label{prop_basic} Let $x_1,x_2$ denote a generating pair of $\what{M}$, corresponding to a basis $t_1,t_2$ of $\what{T}$. Then,
\begin{itemize}
\item[(a)] Let $(t,a)\in\what{T}\rtimes\what{A}$, and let $(t',1)\in\what{T}$. Then we have
$$(t,a)^{-1} = (-a^{-1}t,a^{-1}),\quad\text{and}\quad (t,a)(t',1)(t,a)^{-1} = (t,a)(t',1)(-a^{-1}t,a^{-1}) = (at',1)$$
\item[(b)] $\mu([x_1,x_2]) := \mu(x_1x_2x_1^{-1}x_2^{-1}) = ((1-a_2)t_1 + (a_1-1)t_2,1)$
\item[(c)] For any section $\sigma : A\rightarrow M$, $\what{M}'$ is topologically generated by the set $\{\sigma(a)[x_1,x_2]\sigma(a)^{-1} : a\in A\}$.
\item[(d)] If $x_1,x_2$ are moreover generators of $M$, then $M'$ is a free $\ZZ[A]$-module of rank 1 with basis $[x_1,x_2]$.
\item[(e)] Every $\ZZ[A]$-basis of $M'$ can be written as the commutator of a generating pair.
\end{itemize}
\end{prop}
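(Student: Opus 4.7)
My plan is to treat the five parts in order, with (a) and (b) as bookkeeping in the semidirect product and the later parts as consequences of (b) combined with general principles about metabelian groups.

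For (a), I would compute the inverse $(t,a)^{-1}$ by solving $(t,a)(s,b)=(0,1)$ in the semidirect product, forcing $b=a^{-1}$ and $s=-a^{-1}t$; the conjugation formula then follows from one application of the product rule $(t_1,a_1)(t_2,a_2)=(t_1+a_1t_2,a_1a_2)$. For (b), I would substitute $\mu(x_i)=(t_i,a_i)$ and grind step-by-step: $\mu(x_1x_2)=(t_1+a_1t_2,a_1a_2)$; then using (a) and commutativity of $\what{A}$ (so that $a_1a_2a_1^{-1}=a_2$), $\mu(x_1x_2x_1^{-1})=((1-a_2)t_1+a_1t_2,a_2)$; multiplying on the right by $\mu(x_2)^{-1}=(-a_2^{-1}t_2,a_2^{-1})$ gives $\mu([x_1,x_2])=((1-a_2)t_1+(a_1-1)t_2,1)$.

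For (c), I would first prove that the closed subgroup $N$ of $\what{M}$ generated by all $\what{M}$-conjugates of $[x_1,x_2]$ equals $\what{M}'$: clearly $N\subset\what{M}'$, and since $N$ is normal and contains $[x_1,x_2]$, the images of $x_1,x_2$ commute in $\what{M}/N$, making it abelian, so $\what{M}'\subset N$. Next, because $\what{M}'$ is abelian, the conjugation action of $\what{M}$ on $\what{M}'$ factors through $\what{A}$; writing $a\cdot c:=\tilde a c\tilde a^{-1}$ for any lift $\tilde a$, this makes $\what{M}'$ a continuous $\what{A}$-module (hence a $\Zhat\ps{\what{A}}$-module). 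Since $A$ is dense in $\what{A}$ and the action is continuous, the closed subgroup generated by $\{a\cdot[x_1,x_2]:a\in A\}$ contains every $\what{A}$-translate of $[x_1,x_2]$, hence equals $\what{M}'$. Finally, the elements $a\cdot[x_1,x_2]$ with $a\in A$ are exactly $\sigma(a)[x_1,x_2]\sigma(a)^{-1}$ for any section $\sigma:A\to M$.

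For (d), the same commutator analysis applied inside the discrete group $M$ shows that $[x_1,x_2]$ generates $M'$ as a $\ZZ[A]$-module. To promote generation to freeness I would use that the restriction $\mu|_M$ is injective: a relation $r\cdot[x_1,x_2]=1$ with $r\in\ZZ[A]$ becomes, by (b), $r\cdot((1-a_2)t_1+(a_1-1)t_2)=0$ in the free $\Zhat\ps{\what{A}}$-module $\what{T}$; comparing the $t_1$-coordinate gives $r(1-a_2)=0$ in $\Zhat\ps{\what{A}}$. Since $\ZZ[A]\cong\ZZ[a_1^{\pm1},a_2^{\pm1}]$ is a domain that embeds into $\Zhat\ps{\what{A}}$, and $1-a_2\ne 0$ there, we conclude $r=0$. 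For (e), if $m\in M'$ is another $\ZZ[A]$-basis, then $m=u\cdot[x_1,x_2]$ for some $u\in\ZZ[A]^\times$; because the units of a Laurent polynomial ring over $\ZZ$ are the signed monomials, $u=\epsilon\cdot a$ with $\epsilon=\pm 1$ and $a\in A$. Lifting $a$ to $\tilde a\in M$, the identity $\tilde a[x_1,x_2]\tilde a^{-1}=[\tilde a x_1\tilde a^{-1},\tilde a x_2\tilde a^{-1}]$ (and, when $\epsilon=-1$, the swap $[x_1,x_2]^{-1}=[x_2,x_1]$) exhibits $m$ as the commutator of a conjugate of $(x_1,x_2)$ or $(x_2,x_1)$, which still generates $M$ since inner automorphisms preserve generating sets.

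The proof is really a sequence of routine moves; the only place that requires genuine care is the passage from ``$\what{A}$-conjugates topologically generate $\what{M}'$'' to ``$A$-conjugates topologically generate $\what{M}'$'' in part (c), which rests on the interplay between density of $A\subset\what{A}$ and continuity of the conjugation action. Part (d) also uses the non-obvious (but elementary) fact that $\ZZ[A]$ injects into $\Zhat\ps{\what{A}}$, and part (e) relies on the classical determination of units in a Laurent polynomial ring over $\ZZ$.
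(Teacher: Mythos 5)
Your proof is correct, and for (a), (b), (e) it runs exactly as in the paper: direct computation in $\what{T}\rtimes\what{A}$ for (a) and (b), and for (e) the observation that $\ZZ[A]^\times=\{\pm1\}\times A$ together with $[x_1^a,x_2^a]=[x_1,x_2]^a$ and $[x_2,x_1]=[x_1,x_2]^{-1}$. Where you genuinely differ is (c) and (d): the paper deduces both in one stroke from the classical fact that $F'$ is a free group on the conjugates $w[x_1,x_2]w^{-1}$, so that $M'=F'/F''$ is visibly free abelian on these conjugates, i.e.\ a free $\ZZ[A]$-module of rank $1$ on $[x_1,x_2]$, with (c) following by passing to closures. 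You instead prove generation directly (the normal closure of $[x_1,x_2]$ has abelian quotient, conjugation on $\what{M}'$ factors through $\what{A}$, and density of $A$ in $\what{A}$ plus continuity reduces $\what{A}$-translates to $A$-translates), and you get freeness in (d) from the Magnus embedding: a relation $[x_1,x_2]^r=1$ with $r\in\ZZ[A]$ forces, via (b) and the freeness of $\what{T}$, $r(1-a_2)=r(a_1-1)=0$ in $\Zhat\ps{\what{A}}$, whence $r=0$ because $\ZZ[A]\hookrightarrow\Zhat\ps{\what{A}}$ is injective and $\ZZ[A]$ is a domain. This is legitimate and non-circular: $\mu$, its injectivity, and the freeness of $\what{T}$ are all established before Proposition \ref{prop_basic} and use nothing from (d); note you do not actually need injectivity of $\mu|_M$, only that the trivial element of $M$ maps to the trivial element of $\what{M}$. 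The trade-off: the paper's route is a one-line reduction to a standard free-group theorem, while yours is self-contained, avoids that citation, and your density/continuity argument for (c) handles topological generating pairs of $\what{M}$ (not necessarily lying in $M$) cleanly. The only loose ends are the two elementary facts you flagged --- injectivity of $\ZZ[A]\to\Zhat\ps{\what{A}}$ (separate the exponents modulo a large $m$ and the coefficients modulo a large $n$) and triviality of the units of $\ZZ[a_1^{\pm1},a_2^{\pm1}]$ (also used without proof in the paper) --- both true and easy to supply.
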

\begin{proof} (a) and (b) follow from direct calculations. (c) and (d) follow from the fact that $F'$ is the free (discrete) group on the basis $\{w[x_1,x_2]w^{-1} : w\in F\}$, and thus $M'$ is generated by $\{\sigma(a)[x_1,x_2]\sigma(a)^{-1} : a\in A\}$. For (e), again assume that $x_1,x_2$ generate $M$. Note that $\ZZ[A]\cong \ZZ[a_1,a_2,a_1^{-1},a_2^{-1}]$ and has unit group the direct product $\{1,-1\}\times A$. Then, any basis of $M'$ is given by $[x_1,x_2]^u$ for some unit $u\in\ZZ[A]^\times$. Conjugating $x_1,x_2$ by $a\in A$ gives rise to the commutator $[x_1^a,x_2^a] = [x_1,x_2]^a$, and exchanging $x_1$ with $x_2$ gives rise to $[x_2,x_1] = [x_1,x_2]^{-1}$. Thus, $\Aut(M)$ acts transitively on the set of generators for $M'$ - this proves (e).
\end{proof}

For a profinite group $G$, and a commutative profinite ring $R$, we define the \emph{completed group algebra}
$$R\ps{G} := \varprojlim (R/I)[G/U]$$
as $I,U$ range over all open ideals, open normal subgroups of $G$. It's well known that $\ZZ_p\ps{\ZZ_p}\cong\ZZ_p\ps{s}$ (c.f. \cite{Wil98} Theorem 7.3.3). Here we give an analogous result for $\Zhat\ps{\what{A}}$.

\begin{prop}\label{hard_prop} Let $\Cyc_p$ be the set of $f\in\ZZ_p[x]$ which are irreducible factors of $x^m-1$ for some integer $m\ge 1$ coprime to $p$. We have an isomorphism of profinite rings\footnote{A profinite ring is an inverse limit of finite (discrete) rings, equipped with the usual inverse limit topology.}
$$\Zhat\ps{\what{A}}\cong\prod_{p\text{ prime}}\prod_{(f_1,f_2)\in \Cyc_p^2}\Big(\ZZ_p[x_1]/(f_1))\otimes\ZZ_p[x_2]/(f_2)\Big)\ps{s_1,s_2} $$
In particular, $\Zhat\ps{\what{A}}$ is a product of complete regular local rings of the form $\ZZ_{p^r}\ps{s_1,s_2}$ with $(r,p) = 1$, the images of $a_1,a_2$ are given by $u_1(1+s_1),u_2(1+s_2)$ where the $u_i\in\prod_q\ZZ_q^\times$ are, in each coordinate, a root of unity of order coprime to $q$ which generates the extension $\ZZ_q/\ZZ_p$.

\sgap

The images of $a_1-1,a_2-1$ are, in each coordinate, both nonzero and coprime (ie, are not both divisible by a common prime element of $\ZZ_q\ps{s_1,s_2}$). Further, for every prime $p$, there is precisely one coordinate $\ZZ_{p^r}\ps{s_1,s_2}$ in which both $a_1-1,a_2-1$ are prime, in which case their images are precisely $s_1,s_2$. In every other coordinate at least one of $a_1-1,a_2-1$ is a unit.

\end{prop}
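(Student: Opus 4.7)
The plan is to decompose $\Zhat\ps{\what{A}}$ step by step: first separate by prime $p$, then separate the $p$-primary from the prime-to-$p$ torsion in the finite quotients of $\what{A}$. Since $\Zhat = \prod_p \ZZ_p$, for any finite abelian group $B$ we have $\Zhat[B] = \prod_p \ZZ_p[B]$; passing to the inverse limit over finite quotients of $\what{A}$ yields $\Zhat\ps{\what{A}} = \prod_p \ZZ_p\ps{\what{A}}$.

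Next, fix $p$ and write $\what{A} \cong \ZZ_p^2 \times \prod_{\ell\ne p}\ZZ_\ell^2$, so that finite quotients of $\what{A}$ appearing in $\ZZ_p\ps{\what{A}} = \varprojlim (\ZZ/p^r)[\what{A}/U]$ are cofinally of the form $\mu_{p^s}^2\times\mu_n^2$ with $\gcd(p,n)=1$. Then $(\ZZ/p^r)[\mu_{p^s}^2\times\mu_n^2] = (\ZZ/p^r)[\mu_{p^s}^2]\otimes_{\ZZ/p^r} (\ZZ/p^r)[\mu_n^2]$. For the prime-to-$p$ part, since $x^n-1$ is separable over $\ZZ_p$, CRT gives $\ZZ_p[\mu_n] = \prod_{f\in\Cyc_p,\ f\mid x^n-1}\ZZ_p[x]/(f)$, and squaring indexes the factors by pairs $(f_1,f_2)$. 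Passing to the limit over $n$ fills out the full product over $\Cyc_p^2$. For the $p$-part, the classical identification $\ZZ_p\ps{\ZZ_p}\cong\ZZ_p\ps{s}$ (\cite{Wil98} Theorem 7.3.3), with the topological generator $a$ corresponding to $1+s$, gives $\varprojlim_{r,s}(\ZZ/p^r)[\mu_{p^s}^2] \cong \ZZ_p\ps{s_1,s_2}$. Combining,
$$\ZZ_p\ps{\what{A}} \cong \prod_{(f_1,f_2)\in\Cyc_p^2}\Bigl(\ZZ_p[x_1]/(f_1)\otimes_{\ZZ_p}\ZZ_p[x_2]/(f_2)\Bigr)\ps{s_1,s_2}.$$
Taking the product over all $p$ gives the claimed decomposition. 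To match the claim that each factor is a regular local ring of the form $\ZZ_{p^r}\ps{s_1,s_2}$, note that each $\ZZ_p[x_i]/(f_i)$ is the ring of integers in an unramified extension of $\QQ_p$, and the tensor product of two unramified extensions of $\ZZ_p$ splits as a finite product of rings of integers in unramified extensions; we absorb this finite splitting into the outer product, after which each coordinate is indeed a ring of the form $\ZZ_{p^r}\ps{s_1,s_2}$.

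Under this final decomposition, the image of $a_i$ in a coordinate indexed (in part) by $(p,f_1,f_2)$ is $u_i(1+s_i)$, where $u_i\in\ZZ_{p^r}^\times$ is a root of $f_i$, and hence a root of unity of order coprime to $p$ which generates $\ZZ_{p^{\deg f_i}}/\ZZ_p$. Then $a_i - 1 = (u_i-1) + u_is_i$. If $u_i=1$ (equivalently $f_i = x-1$), then $a_i-1 = s_i$, a uniformizer. If $u_i\ne 1$, then $u_i$ is a nontrivial root of unity of order prime to $p$, so $u_i-1$ is a unit in $\ZZ_{p^r}$, whence $a_i-1 = (u_i-1)\bigl(1+(u_i-1)^{-1}u_is_i\bigr)$ is itself a unit. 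In every coordinate both $a_i-1$ are therefore nonzero, and at most one of them can fail to be a unit, which gives coprimality. For each prime $p$ there is precisely one coordinate, namely the one with $(f_1,f_2)=(x-1,x-1)$, in which both $a_i-1$ are prime elements, and they equal $s_1,s_2$ there.

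The main obstacle I expect is the bookkeeping of the nested limits and the extra decomposition of the tensor products of unramified rings of integers into local factors; once these are organized, the computations for $a_i$ and $a_i-1$ in each coordinate are direct.
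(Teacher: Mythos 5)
Your argument follows essentially the same route as the paper's proof: split $\Zhat\ps{\what{A}}$ as $\prod_p\ZZ_p\ps{\what{A}}$, handle the prime-to-$p$ part via the Hensel/CRT factorization of $x^m-1$ into elements of $\Cyc_p$ and the pro-$p$ part via $\ZZ_p\ps{\ZZ_p}\cong\ZZ_p\ps{s}$, recombine, and then read off $a_i-1=(u_i-1)+u_is_i$ coordinate by coordinate (you are in fact slightly more explicit than the paper about splitting $\ZZ_p[x_1]/(f_1)\otimes\ZZ_p[x_2]/(f_2)$ into local factors). The only wording to repair is the claim that at most one of $a_1-1,a_2-1$ can fail to be a unit: in the distinguished coordinate $(f_1,f_2)=(x-1,x-1)$ both fail to be units, but coprimality still holds there because, as your next sentence notes, their images are the distinct, non-associate primes $s_1,s_2$ of $\ZZ_p\ps{s_1,s_2}$.
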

\begin{proof} By definition, $\Zhat\ps{\what{A}} = \varprojlim_{n,m}(\ZZ/n)[a_1,a_2]/(a_1^m-1,a_2^m-1)$. For each $n,m$, the prime powers dividing $n$ generate comaximal ideals, and so we have a decomposition
$$(\ZZ/n)[a_1,a_2]/(a_1^m-1,a_2^m-1)\cong \prod_{p^r\mid\mid n}(\ZZ/p^r)[a_1,a_2](a_1^m-1,a_2^m-1)$$
These decompositions carry over to the limit, and so we have
$$\Zhat\ps{\what{A}} \cong \prod_p\ZZ_p\ps{\what{A}}$$
Let $\ZZ' := \prod_{p'\ne p}\ZZ_{p'}$ be the prime-to-$p$ part of $\Zhat$, so that $\Zhat = \ZZ_p\times\ZZ'$. Let $\what{A} = \ZZ_p^2\times(\ZZ')^2$. Let $\cotimes$ denote the completed tensor product over $\ZZ_p$. Then we have
$$\ZZ_p\ps{\what{A}} = \ZZ_p\ps{\ZZ_p}\cotimes\ZZ_p\ps{\ZZ_p}\cotimes\ZZ_p\ps{\ZZ'}\cotimes\ZZ_p\ps{\ZZ'}$$
By \cite{Wil98} Theorem 7.3.3, $\ZZ_p\ps{\ZZ_p}\cong\ZZ_p\ps{s}$, where the isomorphism sends a generator of $\ZZ_p$ to $1+s$. We have
$$\ZZ_p\ps{\ZZ_p}\cotimes\ZZ_p\ps{\ZZ_p} = \ZZ_p\ps{s_1}\cotimes\ZZ_p\ps{s_2} := \varprojlim_n\ZZ_p[s_1]/(s_1^n)\otimes\ZZ_p[s_2]/(s_2^n) = \varprojlim_n\ZZ_p[s_1,s_2]/(s_1^n,s_2^n) = \ZZ_p\ps{s_1,s_2}.$$
Next we consider
$$\ZZ_p\ps{\ZZ'} := \varprojlim_{m\;:\; (m,p)=1}\ZZ_p[x]/(x^m-1)$$
Since $(m,p) = 1$, $x^m-1$ factors into distinct irreducibles over $\FF_p$, and hence by Hensel's lemma we may lift such a factorization to $\ZZ_p[x]$:
$$x^m - 1 = f_1\cdot f_2\cdots f_k\in\ZZ_p[x]$$
with each $f_i$ irreducible. Note that every maximal ideal of $\ZZ_p[x]$ has the form $(p,f(x))$ for some $f\in\ZZ_p[x]$ irreducible mod $p$. This implies that the $f_i$ are comaximal in $\ZZ_p[x]$, so we may write
$$\ZZ_p[x]/(x^m-1)\cong \prod_i\ZZ_p[x]/(f_i)$$
where each factor in the product is the unique connected etale extension of $\ZZ_p$ of degree $\deg(f_i)$. Taking the limit over all $m$ coprime to $p$, we get
$$\ZZ_p\ps{\ZZ'} = \prod_{f\in \Cyc_p}\ZZ_p[x]/(f)$$
Given a prime power $q = p^r$, the number of times $\ZZ_q$ appears in this product is precisely the number $N_{p,r}$ of irreducible factors of degree $r$ in $x^{p^r-1}-1\in\FF_p[x]$, or equivalently the number of ($p$-power) Frobenius orbits of generators of the cyclic group $\FF_q^\times = \FF_{p^r}^\times$. The images of the generator $1\in\ZZ'$ in the $N_{p,r}$ copies of $\ZZ_q$ inside $\ZZ_p\ps{\ZZ'}$ form a complete set of representatives of Frobenius orbits of the primitive $(p^r-1)$-th roots of unity in $\ZZ_q$. In particular, none of the coordinates of the image of $1\in\ZZ'$ vanish.

\sgap

Thus, we've shown that
$$\ZZ_p\ps{\what{A}} = \ZZ_p\ps{s_1,s_2}\cotimes\prod_{f_1\in\Cyc_p}\ZZ_p[x_1]/(f_1(x_1))\cotimes\prod_{f_2\in\Cyc_p}\ZZ_p[x_1]/(f_2(x_2))$$
Since completed tensor products distribute over arbitrary direct products (c.f. \cite{Wil98} Prop 7.7.5), we have
$$\ZZ_p\ps{\what{A}} \cong \prod_{(f_1,f_2)\in\Cyc_p^2}\Big(\ZZ_p[x_1]/(f_1(x_1))\otimes\ZZ_p[x_2]/(f_2(x_2))\Big)\otimes\ZZ_p\ps{s_1,s_2}$$
where we are able to replace the completed tensor product with the usual tensor product since in each case the left side is finite $\ZZ_p$-algebra (c.f. \cite{RZ10} Prop 5.5.3(d)). Using the initial observation that $\Zhat\ps{\what{A}} = \prod_p\ZZ_p\ps{\what{A}}$, we get the desired isomorphism. In particular, we may write $\Zhat\ps{\what{A}}$ as a product of complete regular local rings:
$$\Zhat\ps{\what{A}}\cong \prod_q\ZZ_q\ps{s_1,s_2}$$
where the $q$'s run over prime powers $p^r$ with $(r,p) = 1$, with each $q$ appearing multiple times. Under this isomorphism, the image of the generators $a_1,a_2\in\what{A}$ are $u_1(1+s_1),u_2(1+s_2)$, where $u_1,u_2\in\prod_q\ZZ_q^\times$ are, in each coordinate, the images of $x_1,x_2$, and hence are roots of unity of order coprime to $p$ which generate the extension $\ZZ_q/\ZZ_p$. The image of $a_1-1,a_2-1$ are given by $u_1s_1 + (u_1-1),u_2s_2+(u_2-1)$. Thus, for each $i=1,2$ and each coordinate ``$\ZZ_q\ps{s_1,s_2}$'', we have two possibilities:
\begin{itemize}
\item If the image of $u_i$ in $\ZZ_q^\times$ is equal to 1, then the image of $a_i-1$ is $u_is_i = s_i$. If this happens for both $i = 1,2$ then the images of $a_1-1,a_2-1$ give distinct prime elements in $\ZZ_q\ps{s_1,s_2}$.
\item If the image of $u_i$ in $\ZZ_q^\times$ is not equal to 1, then the image of $a_i-1$ is $u_is_i + (u_i-1)$. Since $u_i$ is a root of unity of order coprime to $p$, $u_i-1$ is a unit, and hence $u_is_i+(u_i-1)$ is a unit (in the coordinate $\ZZ_q\ps{s_1,s_2})$.
\end{itemize}
Since the coordinate of $u_i$ in any $\ZZ_q\ps{s_1,s_2}$ is just the image of $x_i$, the only coordinate in which both $u_1 = u_2 = 1$ is the coordinate where $f_1 = f_2 = x-1$, and hence for each $p$ there is precisely one coordinate $\ZZ_p\ps{s_1,s_2}$ in which the image of $a_1-1,a_2-1$ are both prime, and in this case they are just $s_1,s_2$.

\end{proof}

\begin{remark}
If we're only interested in pro-$p$ metabelian groups, the relevant ring is $\ZZ_p\ps{\ZZ_p^2}$, which is simply isomorphic to $\ZZ_p\ps{s_1,s_2}$.
\end{remark}

\begin{defn}\label{def_distinguished} In the previous proposition, we will call the unique coordinate $\ZZ_p\ps{s_1,s_2}$ in which $a_i-1 = s_i$ for both $i = 1,2$ the \emph{distinguished $p$-coordinate}. The constant term of an element in the distinguished $p$-coordinate will be called the \emph{distinguished $p$-coefficient}. As described in the proof, this is the coordinate corresponding to $f_1 = f_2 = x-1$. Collecting these coordinates for each $p$, we may write
$$\Zhat\ps{\what{A}} = \Zhat\ps{s_1,s_2}\times\prod_q\ZZ_q\ps{s_1,s_2}$$
Given $f\in\Zhat\ps{\what{A}}$, we'll call the first coordinate in the above decomposition the \emph{distinguished coordinate} of $f$ - this is just a power series in $s_1,s_2$ with coefficients in $\Zhat$. The constant term of the distinguished coordinate will be called the \emph{distinguished coefficient}. 

\sgap

Let $\epsilon : \Zhat\ps{\what{A}}\rightarrow\Zhat$ be the map induced by functoriality by sending every $a\in\what{A}$ to $1\in\Zhat$. This is the augmentation map, and its kernel is the augmentation ideal. Let $f : \Zhat\rightarrow\Zhat\ps{\what{A}}$ be the ``structure homomorphism'', then the augmentation satisfies $\epsilon\circ f = \id_{\Zhat}$. Furthermore, it follows from the definitions that given $b\in\Zhat\ps{\what{A}}$, its distinguished coefficient is just $\epsilon(b)$. 

\sgap

Let $\langle s_1,s_2\rangle$ be the ideal of $\Zhat\ps{s_1,s_2}$ generated by $s_1,s_2$. The subset of \emph{special elements} of $\Zhat\ps{\what{A}}$ is
$$\Zhat\ps{\what{A}}' := \epsilon^{-1}(1) = \big(1+\langle s_1,s_2\rangle\big)\times\prod_q\ZZ_q\ps{s_1,s_2}$$
In other words, the special elements of $\Zhat\ps{\what{A}}$ are those whose augmentation is 1. For any element $r\in\Zhat\ps{\Ahat}$ we may uniquely decompose $r$ as a product
$$r = us\qquad u\in\Zhat,\; s\in\Zhat\ps{\Ahat}'$$
We call $u$ the ``scalar part'' of $r$, and $s$ the ``special part'' of $r$. We will call
$$\Zhat\ps{\what{A}}^{\times'} := \Zhat\ps{\what{A}}^\times\cap\Zhat\ps{\what{A}}'$$
the subgroup of \emph{special units}. In particular, we have
$$\Zhat\ps{\what{A}}^\times\cong\Zhat^\times\times\Zhat\ps{\what{A}}^{\times'}$$
\end{defn}

\begin{cor}\label{hard_cor} Let $x_1,x_2$ generate $\what{M}$, then $\what{M}'$ is a free $\Zhat\ps{\what{A}}$-module of rank 1, generated by $[x_1,x_2]$.
\end{cor}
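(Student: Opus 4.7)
My plan is to transfer the question from $\what{M}'$ to the $\Zhat\ps{\what{A}}$-module $\what{T}$ using the Magnus embedding $\mu$ of Proposition \ref{prop_universal}. Since $\mu$ is a morphism of extensions of $\what{A}$, its restriction to $\what{M}'$ lands in the kernel $\what{T}\times\{1\}\subset\what{T}\rtimes\what{A}$, giving an injection $\mu:\what{M}'\hookrightarrow\what{T}$. A short calculation using Proposition \ref{prop_basic}(a) shows that conjugation by a lift of $a\in\what{A}$ in $\what{M}$ corresponds under $\mu$ to multiplication by $a\in\Zhat\ps{\what{A}}$ on $\what{T}$; hence $\mu|_{\what{M}'}$ is a map of $\Zhat\ps{\what{A}}$-modules. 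It therefore suffices to show that $\mu(\what{M}')$ is a free $\Zhat\ps{\what{A}}$-submodule of $\what{T}$ of rank $1$ generated by $v := \mu([x_1,x_2]) = (1-a_2)t_1 + (a_1-1)t_2$, the formula for $v$ coming from Proposition \ref{prop_basic}(b).

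I would first verify that $v$ has trivial annihilator, which is the key freeness input. If $r\in\Zhat\ps{\what{A}}$ satisfies $rv=0$, then $r(1-a_2) = 0$ and $r(a_1-1) = 0$. By Proposition \ref{hard_prop}, $\Zhat\ps{\what{A}}$ decomposes as a direct product of regular local integral domains of the form $\ZZ_q\ps{s_1,s_2}$, in each factor of which the image of $a_1-1$ is nonzero. Consequently $a_1-1$ is a non-zero-divisor in $\Zhat\ps{\what{A}}$, forcing $r=0$, so the cyclic submodule $\Zhat\ps{\what{A}}\cdot v\subset\what{T}$ is free of rank $1$.

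Finally I would establish the equality $\mu(\what{M}') = \Zhat\ps{\what{A}}\cdot v$. By Proposition \ref{prop_basic}(c), $\mu(\what{M}')$ is the closed subgroup of $\what{T}$ topologically generated by $\{a\cdot v : a\in A\}$, which is certainly contained in $\Zhat\ps{\what{A}}\cdot v$. For the reverse inclusion, since $\Zhat\ps{\what{A}}$ is a compact profinite ring acting continuously on the Hausdorff space $\what{T}$, the image $\Zhat\ps{\what{A}}\cdot v$ is already closed; and as $\ZZ[A]$ is dense in $\Zhat\ps{\what{A}}$, every element of $\Zhat\ps{\what{A}}\cdot v$ is a limit of integral $A$-linear combinations of $v$, each of which lies in the closed subgroup topologically generated by the $A$-translates of $v$. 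The only real subtlety lies in the interplay of topology with the module structure --- namely recognizing that $\Zhat\ps{\what{A}}\cdot v$ is automatically closed in $\what{T}$ --- but this drops out immediately from compactness of $\Zhat\ps{\what{A}}$.
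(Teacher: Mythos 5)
Your proposal is correct and follows essentially the same route as the paper: the paper's (much terser) proof likewise combines the Magnus-embedding formula of Proposition \ref{prop_basic}(b), the non-zero-divisor property of $a_1-1,a_2-1$ from Proposition \ref{hard_prop}, and the topological generation of $\what{M}'$ by $A$-conjugates of $[x_1,x_2]$ from Proposition \ref{prop_basic}(c). Your write-up simply makes explicit the details the paper leaves implicit, in particular the compactness argument showing $\Zhat\ps{\what{A}}\cdot\mu([x_1,x_2])$ is closed, which is a correct and welcome clarification.
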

\begin{proof} The $\Zhat\ps{\what{A}}$-module structure was described in \S\ref{ss_general_setting}. The result then follows from Proposition \ref{prop_basic}(b) and the fact that the images of $a_1-1,a_2-1$ in $\Zhat\ps{\what{A}}$ are not zero divisors, and that $\what{M}'$ is topologically generated by conjugates of the commutator $[x_1,x_2]$.
\end{proof}

\begin{cor}\label{cor_Mhat_center_free} The center $Z(\what{M})$ of $\what{M}$ is trivial.
\end{cor}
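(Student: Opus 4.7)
My plan is to work through the Magnus embedding $\mu : \widehat{M} \hookrightarrow \widehat{T} \rtimes \widehat{A}$, reducing the centrality condition to a system of linear equations over $\widehat{\ZZ}\ps{\widehat{A}}$ which we can solve using the product-of-integral-domains decomposition from Proposition \ref{hard_prop}.

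\textbf{Step 1 (set-up).} Fix generators $x_1, x_2$ of $\widehat{M}$ with images $a_1, a_2 \in \widehat{A}$, and let $t_1, t_2$ be the corresponding $\widehat{\ZZ}\ps{\widehat{A}}$-basis of $\widehat{T}$ so that $\mu(x_i) = (t_i, a_i)$. Suppose $z \in Z(\widehat{M})$ and write $\mu(z) = (t,a)$ with $t = r_1 t_1 + r_2 t_2 \in \widehat{T}$ and $a \in \widehat{A}$.

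\textbf{Step 2 (centrality equations).} For any $(t',a') \in \widehat{T} \rtimes \widehat{A}$, a direct computation using $\widehat{A}$ abelian gives
$$(t,a)(t',a') = (t + at',\, aa'), \qquad (t',a')(t,a) = (t' + a't,\, aa'),$$
so commutativity is equivalent to $(a-1)t' = (a'-1)t$. Applying this with $(t',a') = (t_i,a_i)$ for $i=1,2$ yields
$$(a_1 - 1)(r_1 t_1 + r_2 t_2) = (a-1) t_1, \qquad (a_2 - 1)(r_1 t_1 + r_2 t_2) = (a-1) t_2.$$
Comparing coefficients in the free $\widehat{\ZZ}\ps{\widehat{A}}$-module $\widehat{T}$ gives
$(a_1 - 1) r_2 = 0$ and $(a_2 - 1) r_1 = 0$, along with $(a_1-1) r_1 = a-1 = (a_2-1) r_2$.

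\textbf{Step 3 (killing the $r_i$).} By Proposition \ref{hard_prop}, $\widehat{\ZZ}\ps{\widehat{A}}$ decomposes as a product of the integral domains $\ZZ_q\ps{s_1, s_2}$, and in \emph{each} coordinate both $a_1-1$ and $a_2-1$ are nonzero. Hence $a_i - 1$ is a non-zero-divisor in $\widehat{\ZZ}\ps{\widehat{A}}$, so the equations $(a_1-1)r_2 = 0$ and $(a_2-1)r_1 = 0$ force $r_1 = r_2 = 0$, i.e.\ $t = 0$. Then $a - 1 = (a_1-1) r_1 = 0$ as well, so $\mu(z) = (0,1)$ and $z = 1$ by injectivity of $\mu$.

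The only non-routine ingredient is Step 3, and this is precisely the content of Proposition \ref{hard_prop} which we have already proved; given that structural result, the centrality equations trivialize and there is no further obstacle.
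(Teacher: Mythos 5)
Your proof is correct and is essentially the paper's argument: both reduce centrality to linear equations over $\Zhat\ps{\what{A}}$ and kill them using the fact (Proposition \ref{hard_prop}) that $a_1-1$, $a_2-1$ are non-zero-divisors. The only difference is presentational: the paper argues intrinsically with the conjugation action on the free rank-one module $\what{M}'$ (Corollary \ref{hard_cor}), whereas you unwind the same computation in Magnus coordinates inside $\what{T}\rtimes\what{A}$.
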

\begin{proof} If $x\in Z(\what{M})$ with image $a\in\what{A}$, then it must act trivially on $\what{M}'$, which implies, by the freeness of the $\Zhat\ps{\what{A}}$-action, that $a = 1\in\what{A}$, so $x\in\what{M}'$, but again since it is central, we must have $a_ix = x$ for $i = 1,2$. Since $x$ is an element of the free $\Zhat\ps{\what{A}}$-module $\what{M}'$ and $a_i-1$ is not a zero-divisor, this can only happen if $x = 0$.
\end{proof}

The following corollary gives us a nice characterization of the image of $\mu$.
\begin{cor}\label{cor_image_of_mu} Let $\varphi : \what{T}\rightarrow\Zhat\ps{\what{A}}$ be the $\Zhat\ps{\what{A}}$-linear map given by $t_i\mapsto a_i-1$. From this, define
$$D : \what{T}\rtimes \what{A}\longrightarrow\Zhat\ps{\what{A}}\qquad (t,a)\mapsto a-1-\varphi(t)$$
Then, for $(t,a)\in\what{T}\rtimes\what{A}$. The following are equivalent:
\begin{itemize}
\item[(a)] The element $(t,a)$ lies in the image of $\mu : \what{M}\rightarrow\what{T}\rtimes\what{A}$.
\item[(b)] $D(t,a) = 0$.
\item[(c)] Writing $t = b_1t_1 + b_2t_2$, we have $b_1(a_1-1) + b_2(a_2-1) = a-1$.
\end{itemize}
\end{cor}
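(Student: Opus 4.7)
The plan is to prove the equivalences in the order (b)$\Leftrightarrow$(c), then (a)$\Rightarrow$(b), then (b)$\Rightarrow$(a). The equivalence of (b) and (c) is a tautology: by $\Zhat\ps{\what{A}}$-linearity of $\varphi$, $\varphi(b_1 t_1 + b_2 t_2) = b_1(a_1-1) + b_2(a_2-1)$, so $D(t,a) = 0$ is literally the equation in (c).

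For (a)$\Rightarrow$(b), I would show that $\ker D$ is a closed subgroup of $\what{T}\rtimes\what{A}$ containing the generators $\mu(x_i) = (t_i,a_i)$. Closedness is immediate from continuity of $\varphi$. For the subgroup property, using that $\varphi$ is $\Zhat\ps{\what{A}}$-linear so $\varphi(at') = a\varphi(t')$, a direct computation gives
\[
D\bigl((t,a)(t',a')\bigr) = aa' - 1 - \varphi(t) - a\varphi(t'),
\]
which vanishes whenever $D(t,a) = D(t',a') = 0$; closure under inverses is analogous using $(t,a)^{-1} = (-a^{-1}t,a^{-1})$. Since $D(t_i,a_i) = a_i-1-(a_i-1) = 0$, and $x_1,x_2$ topologically generate $\what{M}$, continuity of $\mu$ forces $\mu(\what{M})\subset\ker D$.

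For (b)$\Rightarrow$(a), I would argue $\mu(\what{M}) = \ker D$ by comparing the two subgroups through the projection to $\what{A}$. Both surject onto $\what{A}$ (for $\ker D$, because $(t_i,a_i)\in\ker D$), so it suffices to identify their kernels inside $\what{T}$. The kernel of $\ker D\to\what{A}$ is exactly $\ker\varphi$, while the kernel of $\mu(\what{M})\to\what{A}$ is $\mu(\what{M}')$. By Corollary \ref{hard_cor} combined with Proposition \ref{prop_basic}(b), $\mu(\what{M}')$ is the cyclic $\Zhat\ps{\what{A}}$-submodule generated by $e := (1-a_2)t_1 + (a_1-1)t_2$, and clearly $\varphi(e) = 0$, so $\Zhat\ps{\what{A}}\cdot e\subseteq\ker\varphi$. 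The real work is the reverse inclusion: given $(b_1,b_2)$ with $b_1(a_1-1)+b_2(a_2-1) = 0$, exhibit $r\in\Zhat\ps{\what{A}}$ with $b_1 = -r(a_2-1)$ and $b_2 = r(a_1-1)$.

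This last step is the main obstacle and I would attack it using the structural decomposition of Proposition \ref{hard_prop}. In each factor $\ZZ_q\ps{s_1,s_2}$ in which at least one of $a_i-1$ is a unit, the desired $r$ is produced directly by inverting that element. In the unique distinguished $p$-coordinate for each $p$, where $(a_1-1,a_2-1)$ becomes $(s_1,s_2)$ inside the regular local ring $\ZZ_p\ps{s_1,s_2}$, the two images form a regular sequence (equivalently, are coprime primes in this UFD), so the relation $s_1 b_1 + s_2 b_2 = 0$ forces $s_2\mid b_1$ and $s_1\mid b_2$ with matching cofactors, yielding the required $r$ coordinate-by-coordinate. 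Assembling these across the product decomposition produces the global $r$, completing the identification $\ker\varphi = \Zhat\ps{\what{A}}\cdot e = \mu(\what{M}')$, and hence $\ker D = \mu(\what{M})$.
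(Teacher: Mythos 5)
Your proposal is correct and follows essentially the same route as the paper: (a)$\Rightarrow$(b) by checking $D$ kills the generators $(t_i,a_i)$ and using density/closedness, and (b)$\Rightarrow$(a) by reducing to the fiber over $1\in\what{A}$ and using the coprimality of $a_1-1,a_2-1$ from Proposition \ref{hard_prop} to produce the cofactor $r$ with $t=r\cdot\mu([x_1,x_2])$. Your coordinate-by-coordinate verification (units in the non-distinguished factors, the prime pair $s_1,s_2$ in the distinguished $\ZZ_p\ps{s_1,s_2}$) simply makes explicit what the paper delegates to that proposition, and your subgroup-comparison framing of the reduction is an equivalent rephrasing of the paper's translation by an element $\mu(m)=(t',a^{-1})$.
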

\begin{proof} It's clear that (b)$\iff$(c). We wish to show that (a)$\iff$(b). One can check that $D$ satisfies
$$D((t,a)(t',a')) = D(t+at',aa') = aa'-1-\varphi(t+at') = \cdots = D(t,a) + aD(t',a')$$
and hence is a continuous crossed homomorphism where $\what{T}\rtimes\what{A}$ acts on $\Zhat\ps{\what{A}}$ through $\what{A}$. Furthermore, we have
$$D(t_i,a_i) = a_i-1-\varphi(t_i) = a_i-1-(a_i-1) = 0\qquad i = 1,2$$
Since $(t_1,a_1),(t_2,a_2)$ generate the dense subgroup $M\le\what{M}$ inside $\what{T}\rtimes\what{A}$, this shows that $D(\mu(\what{M})) = 0$, and hence (a)$\Rightarrow$(b). This direction is identical to the first part of the proof of Theorem 3 in \cite{Rem79}.

\sgap

For the other direction, assume (b). Note that one can always find $m\in\what{M}$ such that $\mu(m) = (t',a^{-1})$ for some $t'\in \what{T}$. Thus, $(t,a)\in\mu(\what{M})$ if and only if $(t+at',1)\in\mu(\what{M})$. Thus, it suffices to show that if $t\in\what{T}$ with $D(t,1) := -\varphi(t) = 0$, then $(t,1)\in\mu(\what{M}')$. To see this, let $t = b_1t_1 + b_2t_2$ where $b_i\in\Zhat\ps{\what{A}}$, then by hypothesis we have
$$b_1(a_1-1) + b_2(a_2-1) = 0,\quad\text{equivalently,}\quad b_1(a_1-1) = b_2(1-a_2)\quad\text{ in }\Zhat\ps{\what{A}}$$
By Proposition \ref{hard_prop}, we know that $a_1-1$ and $1-a_2$ are coprime in $\Zhat\ps{\what{A}}$. Thus, there is a $b\in\Zhat\ps{\what{A}}$ such that
$$b_1 = b(1-a_2)\qquad\text{and}\qquad b_2 = b(a_1-1)$$
As desired, we have
$$t := b_1t_1 + b_2t_2 = b(1-a_2)t_1 + b(a_1-1)t_2 = b\cdot\mu([x_1,x_2])\in\mu(\what{M}')$$
\end{proof}
\begin{remark} The linear map $\varphi : \what{T}\rightarrow\Zhat\ps{\what{A}}$ defined in the proof of \ref{cor_image_of_mu} gives rise to a Koszul complex:
$$0\rightarrow \what{T}\wedge\what{T}\rightarrow\what{T}\rightarrow\Zhat\ps{\what{A}}\rightarrow 0$$
where the first map sends $t_1\wedge t_2\mapsto (1-a_2)t_1 + (a_1-1)t_2\in\what{T}$. The fact we are able to find ``$b$'' as in the proof corresponds to the exactness of this Koszul complex, which is equivalent to $a_1-1,a_2-1$ forming a regular sequence in $\Zhat\ps{\what{A}}$, which follows \emph{a fortiori} from the coprimality of $a_1-1,a_2-1$ in $\Zhat\ps{\what{A}}$.
\end{remark}

\subsection{The profinite Bachmuth embedding $\beta$}\label{ss_bachmuth}
In this section we describe the profinite analog of the Bachmuth embedding first defined in the discrete setting in \cite{Bach65}. We will also identify $\what{M}$ as a subgroup of $\what{T}\rtimes\what{A}$ via $\mu$.

\sgap

Let ``IA'' stand for ``(I)dentity on (A)belianization'', then for a group $G$, define $\IAut(G)$ (resp. $\IAEnd(G)$) to be the group (resp. monoid) of automorphisms (resp. endomorphisms) which act as the identity on $G^\ab$. Similarly, we will use $\IOut(G) := \Ker(\Out(G)\rightarrow\Aut(G^\ab))$. Viewing $G$ as an extension of $G^\ab$, IA-endomorphisms are just endomorphisms of $G$, viewed as an extension of $G^\ab$.

\sgap

In the case of $\what{M}$, by universality of the embedding $\mu : \what{M}\hookrightarrow\what{T}\rtimes\what{A}$ (c.f. \S\ref{ss_profinite_magnus}), any IA-endomorphism of $\what{M}$ extends uniquely to a $\Zhat\ps{\what{A}}$-linear endomorphism of $\what{T}$ such that the induced endomorphism of $\what{T}\rtimes\what{A}$ leaves $\mu(\what{M})$ stable. Thus, we have an continuous injective monoid homomorphism
$$\beta : \IAEnd(\what{M})\hookrightarrow \End(\what{T}) \cong M_2(\Zhat\ps{\what{A}})$$
called the \emph{profinite Bachmuth embedding}. Explicitly, if $x_1,x_2$ generate $\what{M}$, with corresponding basis $t_1,t_2$ of $\what{T}$, then for $\gamma\in\IAEnd(\what{M})$, $\beta(\gamma)(t_i)$ is determined by the formula
$$\gamma(x_i) = \gamma(t_i,a_i) = (\beta(\gamma)(t_i),a_i)$$

\sgap

When there can be no confusion, we will often identify elements $\gamma\in\IAEnd(\what{M})$ with their matrices $\beta(\gamma)$, and will use ``the (Bachmuth) \emph{determinant} of $\gamma$'' to refer to $\det(\beta(\gamma))$. A key result, which will be used frequently, is that $\mu(\what{M}')$ is an 1-dimensional ``eigenspace'' for every element of $\beta(\IAEnd(\what{M}))\subset\End(\what{T})$, with eigenvalues given by the Bachmuth determinant (c.f. Proposition \ref{prop_alternative_def_of_det}).

\sgap


\sgap

\begin{example}\label{ex_conjugation} Let $x_1,x_2$ generate $\what{M}$ with images $a_1,a_2\in\what{A}$ and corresponding basis $t_1,t_2\in\what{T}$. Let $\gamma_i\in\IAut(\what{M})$ denote the inner automorphism $x\mapsto x_ixx_i^{-1}$, $i = 1,2$. We compute their matrices here:
$$\gamma_1(x_1) = x_1,\qquad \gamma_1(x_2) = (t_1,a_1)(t_2,a_2)(-a_1^{-1}t_1,a_1^{-1}) = (t_1+a_1t_2 - a_2t_1,a_2) = ((1-a_2)t_1 + a_1t_2,a_2)$$
$$\gamma_2(x_1) = (t_2,a_2)(t_1,a_1)(-a_2^{-1}t_2,a_2^{-1}) = (t_2+a_2t_1 - a_1t_2,a_1) = (a_2t_1 + (1-a_1)t_2,a_1),\qquad \gamma_2(x_2) = x_2$$
$$\gamma_1 = \ttmatrix{1}{1-a_2}{0}{a_1},\qquad \gamma_2 = \ttmatrix{a_2}{0}{1-a_1}{1}$$
In particular, $\det(\gamma_i) = a_i$, and hence inner automorphisms have determinant in $\what{A}$.
\end{example}

\subsection{IA-Endomorphisms of $\what{M}$}\label{ss_IAEnds}
As usual let $x_1,x_2$ generate $\what{M}$ with images $a_1,a_2\in\what{A}$ and corresponding basis $t_1,t_2\in\what{T}$. For any pair $r = (r_1,r_2)\in \Zhat\ps{\what{A}}^2$, define the endomorphism
$$\gamma_r\in\IAEnd(\what{M}) \quad\text{given by}\quad \gamma_r(x_i) = [x_1,x_2]^{r_i}\cdot x_i\quad i = 1,2$$
where the exponent $\cdot^{r_i}$ denotes the $\Zhat\ps{\what{A}}$-action on $\what{M}'$ (c.f. \S\ref{ss_profinite_magnus}). By corollary \ref{hard_cor}, every element of $\what{M}'$ can be written uniquely as $[x_1,x_2]^r$ for some $r\in\Zhat\ps{\what{A}}$, and hence by freeness of $\what{M}$, we have an explicit parametrization of $\IAEnd(\what{M})$ given by a bijection of sets (which depends on the choice of $x_1,x_2$)
\begin{equation}\label{eq_bijection}
\Zhat\ps{\what{A}}^2\cong_{\Sets} \IAEnd(\what{M}) \qquad r\mapsto \gamma_r
\end{equation}
In the notation of Example \ref{ex_conjugation}, we have $\gamma_1 = \gamma_{(0,1)}$ and $\gamma_2 = \gamma_{(-1,0)}$.

\sgap

We wish to determine the subset of $\Zhat\ps{\what{A}}^2$ which correspond to IA-automorphisms.

\sgap

By the Bachmuth embedding, for every $r = (r_1,r_2)\in\Zhat\ps{\what{A}}^2$, the corresponding endomorphism of $\what{M}$ can be represented as a $2\times 2$ matrix over $\Zhat\ps{\what{A}}$. Computing the matrix is straightforward - we have
$$x_i = (t_i,a_i)\mapsto [x_1,x_2]^{r_i}x_i = (r_i(1-a_2)t_1 + r_i(a_1-1)t_2 + t_i,a_i),\qquad i = 1,2$$
so the corresponding matrix is
\begin{equation}\label{eq_matrix_of_gamma_r}
\gamma_r = \ttmatrix{1+r_1(1-a_2)}{r_2(1-a_2)}{r_1(a_1-1)}{1+r_2(a_1-1)} = \ttmatrix{1}{0}{0}{1} + \ttmatrix{1-a_2}{1-a_2}{a_1-1}{a_1-1}\ttmatrix{r_1}{0}{0}{r_2}
\end{equation}
with determinant
$$\det(\gamma_r) = 1+r_1(1-a_2) + r_2(a_1-1).$$

\begin{example}
Using (\ref{eq_matrix_of_gamma_r}), we may compute the action of $\gamma_r$ on the commutator $[x_1,x_2]$:
$$\gamma_r([x_1,x_2]) = \gamma_r\cvector{1-a_2}{a_1-1} = \cvector{\det(\gamma_r)(1-a_2)}{\det(\gamma_r)(a_1-1)} = \det(\gamma_r)\cvector{1-a_2}{a_1-1} = [x_1,x_2]^{\det(\gamma_r)}$$
In particular, since $[x_1,x_2]$ generates a free $\Zhat\ps{\what{A}}$-module, the equality $\gamma_r([x_1,x_2]) = [x_1,x_2]^{\det(\gamma_r)}$ above determines $\det(\gamma_r)$ and can be used to define the determinant:
\end{example}

\begin{prop}\label{prop_alternative_def_of_det} Given any $\gamma\in\IAEnd(\what{M})$, the action of $\gamma$ on $\what{M}'$, viewed as a free $\Zhat\ps{\what{A}}$-module of rank 1, is given by multiplication by $\det(\gamma)$. By freeness, this property determines $\det(\gamma)$ - that is to say, $\det(\gamma)$ is the ``eigenvalue'' of $\gamma$ on $\what{M}'$.
\end{prop}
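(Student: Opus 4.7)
My plan is to deduce the proposition from the explicit calculation in the example immediately preceding it, together with a small linearity check. Fix generators $x_1, x_2$ of $\what{M}$ with images $a_1, a_2 \in \what{A}$ and corresponding basis $t_1, t_2 \in \what{T}$. By the bijection (\ref{eq_bijection}), every $\gamma \in \IAEnd(\what{M})$ equals $\gamma_r$ for a unique $r = (r_1, r_2) \in \Zhat\ps{\what{A}}^2$, so it suffices to prove the statement for such $\gamma_r$.

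The first step is to verify that $\gamma$ acts $\Zhat\ps{\what{A}}$-linearly on $\what{M}'$. This rests on the fact that $\gamma$ induces the identity on the abelianization $\what{A}$: for any $m \in \what{M}'$ and any lift $\tilde{a} \in \what{M}$ of $a \in \what{A}$, we have $\gamma(\tilde{a} m \tilde{a}^{-1}) = \gamma(\tilde{a})\,\gamma(m)\,\gamma(\tilde{a})^{-1}$, and since $\gamma(\tilde{a})$ has the same image $a \in \what{A}$ as $\tilde{a}$ and the conjugation action of $\what{M}$ on $\what{M}'$ factors through $\what{A}$, this equals $a \cdot \gamma(m)$. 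Continuity and density of $\Zhat[\what{A}]$ in $\Zhat\ps{\what{A}}$ then give full $\Zhat\ps{\what{A}}$-linearity.

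By Corollary \ref{hard_cor}, $\what{M}'$ is a free $\Zhat\ps{\what{A}}$-module of rank $1$ with basis $[x_1,x_2]$, so the linear endomorphism $\gamma|_{\what{M}'}$ is determined by its value on $[x_1,x_2]$, which is necessarily of the form $[x_1,x_2]^c$ for a unique $c \in \Zhat\ps{\what{A}}$. The example immediately preceding the proposition identifies this scalar via the Bachmuth embedding: using the matrix form (\ref{eq_matrix_of_gamma_r}) of $\gamma_r$ together with $\mu([x_1,x_2]) = ((1-a_2)t_1 + (a_1-1)t_2, 1)$, the direct computation yields $\gamma_r([x_1,x_2]) = [x_1,x_2]^{\det(\gamma_r)}$. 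Combining this with linearity gives that $\gamma$ acts on $\what{M}'$ as multiplication by $\det(\gamma)$, and the freeness statement in Corollary \ref{hard_cor} shows this property characterizes $\det(\gamma)$ uniquely.

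There is no real obstacle here — the statement essentially repackages the explicit matrix calculation as an intrinsic, basis-free description of $\det(\gamma)$. The only point requiring any care is the $\Zhat\ps{\what{A}}$-linearity of $\gamma|_{\what{M}'}$, whose proof I have sketched above.
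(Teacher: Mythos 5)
Your proof is correct and follows essentially the same route as the paper, whose proof is simply ``follows from the above example'': the explicit computation $\gamma_r([x_1,x_2]) = [x_1,x_2]^{\det(\gamma_r)}$ via the matrix (\ref{eq_matrix_of_gamma_r}), combined with the freeness of $\what{M}'$ from Corollary \ref{hard_cor}. The only difference is that you make explicit the $\Zhat\ps{\what{A}}$-linearity of $\gamma|_{\what{M}'}$ (which the paper leaves implicit, and which also follows directly from the fact that $\beta(\gamma)$ is $\Zhat\ps{\what{A}}$-linear on $\what{T}$ and restricts to $\mu(\what{M}')$); your verification of it is correct.
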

\begin{proof} Follows from the above example.
\end{proof}

\begin{remark} By the example above, we find that the bijection (\ref{eq_bijection}) endows $\Zhat\ps{\what{A}}^2$ with a monoid structure given by composition ``$\circ$'', where 
$$r\circ r' = r + \det(\gamma_r)r'$$
relative to which the bijection becomes a monoid homomorphism.
\end{remark}

\begin{example}\label{ex_automorphism} Let $\varphi : \what{T}\rightarrow\Zhat\ps{\what{A}}$ be the $\Zhat\ps{\what{A}}$-linear map given by $t_i\mapsto a_i-1$ (c.f. \ref{cor_image_of_mu}). Then $\beta(\gamma_r)\in\End(\what{T})$ be the induced endomorphism, then from (\ref{eq_matrix_of_gamma_r}), we have
$$\varphi\circ\beta(\gamma_r) = [a_1-1\quad a_2-1]\ttmatrix{1+r_1(1-a_2)}{r_2(1-a_2)}{r_1(a_1-1)}{1+r_2(a_1-1)} = [a_1-1\quad a_2-1] = \varphi$$
Thus, if $\beta(\gamma_r)$ is invertible, we also have $\varphi = \varphi\circ\beta(\gamma_r)^{-1}$. By \ref{cor_image_of_mu}, this implies that the automorphism of $\what{T}\rtimes\what{A}$ induced by $\beta(\gamma)$ restricts to an \emph{automorphism} of $\mu(\what{M})$, and thus $\gamma\in\IAEnd(\what{M})$ is an automorphism of $\what{M}$ if and only if $\beta(\gamma)\in\End(\what{T})$ is an automorphism of $\what{T}$.
\end{example}

\begin{prop}\label{prop_det} An element $r\in\Zhat\ps{\what{A}}^2$ corresponds to an automorphism in $\IAEnd(\what{M})$ if and only if
$$\det(\gamma_r) = 1+r_1(1-a_2) + r_2(a_1-1)\in\Zhat\ps{\what{A}}^{\times'}.$$
Furthermore, the image of the determinant map $\det : \Zhat\ps{\what{A}}^2\rightarrow\Zhat\ps{\what{A}}$ sending $r\mapsto\det(\gamma_r)$ is precisely the set of \emph{special elements} of $\Zhat\ps{\what{A}}'$ (those with augmentation 1).
\end{prop}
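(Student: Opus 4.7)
My plan is to handle the two claims separately, using Example~\ref{ex_automorphism} for invertibility and the decomposition from Proposition~\ref{hard_prop} to identify the augmentation ideal.

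\textbf{Step 1 (Invertibility criterion).} By Example~\ref{ex_automorphism}, $\gamma_r$ is an automorphism of $\what{M}$ if and only if $\beta(\gamma_r) \in \End(\what{T}) \cong M_2(\Zhat\ps{\what{A}})$ is an automorphism of $\what{T}$. Since $\Zhat\ps{\what{A}}$ is commutative, this happens exactly when $\det(\beta(\gamma_r)) \in \Zhat\ps{\what{A}}^\times$; indeed a matrix in $M_2$ over a commutative ring is invertible iff its determinant is, via the classical adjugate formula. So the first assertion reduces to showing that $\det(\gamma_r)$ automatically has augmentation $1$, whence being a unit is equivalent to being a \emph{special} unit. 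This is an immediate calculation: applying the augmentation $\epsilon$ to $1 + r_1(1-a_2) + r_2(a_1-1)$ and using $\epsilon(a_i)=1$ yields $1$.

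\textbf{Step 2 (Surjection onto special elements).} The above shows the image of $r \mapsto \det(\gamma_r)$ lies in $\Zhat\ps{\what{A}}'$. For the reverse inclusion, given $s \in \Zhat\ps{\what{A}}'$, I want to solve
\[ r_1(1 - a_2) + r_2(a_1 - 1) = s - 1. \]
Since $s-1 \in \ker(\epsilon)$, this reduces to showing $\ker(\epsilon) = (a_1-1,\,a_2-1)$ as an ideal of $\Zhat\ps{\what{A}}$: once this is established, writing $s-1 = c_1(a_1-1) + c_2(a_2-1)$ and setting $r_1 = -c_2$, $r_2 = c_1$ exhibits the required preimage.

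\textbf{Step 3 (Identifying the augmentation ideal).} Here I invoke the product decomposition from Proposition~\ref{hard_prop}:
\[ \Zhat\ps{\what{A}} \cong \Zhat\ps{s_1,s_2} \times \prod_q \ZZ_q\ps{s_1,s_2}, \]
where the first (distinguished) factor has $a_i - 1 \mapsto s_i$, and in every other factor at least one of $a_1-1,\,a_2-1$ maps to a unit. The augmentation $\epsilon$ factors as projection to the distinguished factor followed by evaluation $s_1 = s_2 = 0$, so componentwise $\ker(\epsilon)$ consists of elements whose distinguished component is in $(s_1,s_2)$ and whose other components are arbitrary. The ideal $(a_1-1,a_2-1)$ has the same description: in the distinguished factor it is $(s_1,s_2)$, and in each remaining factor it is the unit ideal. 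Hence the two ideals agree, completing the proof.

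\textbf{Main obstacle.} The only non-routine piece is identifying the augmentation ideal with $(a_1-1,a_2-1)$, and this is where the structural result of Proposition~\ref{hard_prop} does all the work; once in hand, both halves of the proposition fall out from $2\times 2$ matrix algebra and a one-line augmentation calculation.
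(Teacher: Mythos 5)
Your proof is correct and follows essentially the same route as the paper: both directions of the automorphism criterion come from Example~\ref{ex_automorphism} (plus the observation that $\det(\gamma_r)$ automatically has augmentation $1$), and the image computation rests on the coordinate decomposition of Proposition~\ref{hard_prop}, with units of $a_i-1$ in the non-distinguished coordinates and $\langle s_1,s_2\rangle$ in the distinguished ones. Your packaging of the surjectivity step as the identity $\ker(\epsilon) = (a_1-1,\,a_2-1)$ is a slightly cleaner global formulation of the same coordinatewise argument the paper gives, but it is not a genuinely different method.
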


\begin{proof} To understand the image of the determinant map, we write $\Zhat\ps{\what{A}}$ as a product $\prod_q\ZZ_q\ps{s_1,s_2}$. In each coordinate, if $a_i-1$ is a unit, then by choosing a suitable value of $r_i$ in that coordinate, we can attain any element of $\ZZ_q\ps{s_1,s_2}$. Thus, the only coordinates in which the determinant map is not surjective are the distinguished $p$-coordinates, where it's plain to see that the image is precisely $1+\langle s_1,s_2\rangle$.

\sgap

Since the Bachmuth embedding $\beta$ is a homomorphism, automorphisms have invertible determinant. The converse follows from \ref{ex_automorphism}.
\end{proof}

\begin{remark} In the proof above, note that there are infinitely many coordinates of $\Zhat\ps{\what{A}}$ in which both $a_1-1,a_2-1$ are units. This implies that $\det : \IAut(\what{M})\rightarrow\Zhat\ps{\what{A}}^\times$ is not injective. However, we will see below (Proposition \ref{prop_IA_is_inner}) that it is injective ``modulo inner automorphisms''.
\end{remark}

\sgap

\begin{prop}\label{prop_IA_is_inner} Let $\gamma\in\IAEnd(\what{M})$, then $\gamma = \gamma_r$ for some $r = (r_1,r_2)\in\Zhat\ps{\what{A}}^2$. Then $\gamma$ is an inner automorphism if and only if
$$\det(\gamma_r) = 1 + r_1(1-a_2) + r_2(a_1-1)\quad\text{lies in}\quad\what{A}$$
In particular, letting $\IOut(\what{M}) := \Ker(\Out(\what{M})\rightarrow\GL(\what{A}))$, the map $\det|_{\IAut(\what{M})}$ induces an isomorphism
$$\ol{\det} : \IOut(\what{M})\rightiso \Zhat\ps{\what{A}}^{\times'}/\what{A}$$
whence an exact sequence
$$1\longrightarrow\left(\Zhat\ps{\what{A}}^{\times'}/\what{A}\right)\stackrel{\ol{\det}^{-1}}{\longrightarrow}\Out(\what{M})\stackrel{\ab_*}{\longrightarrow}\GL_2(\Zhat)\rightarrow 1$$
Here we call $\ol{\det}$ the \emph{reduced determinant}.
\end{prop}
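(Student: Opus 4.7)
The plan is to first nail down the determinant of an inner automorphism, then characterize the IA-automorphisms whose determinant lies in $\what{A}$, and finally assemble the exact sequence.

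Step one is immediate from earlier machinery. Any inner automorphism is IA, and for $x \in \what{M}$ with abelian image $a \in \what{A}$, the conjugation action of $x$ on $\what{M}'$ is, by definition of the $\Zhat\ps{\what{A}}$-module structure, multiplication by $a \in \what{A} \subset \Zhat\ps{\what{A}}^\times$. By Proposition \ref{prop_alternative_def_of_det}, the Bachmuth determinant of this conjugation is exactly $a$. Together with $Z(\what{M}) = 1$ from Corollary \ref{cor_Mhat_center_free}, this shows the composition $\what{M} \rightiso \Inn(\what{M}) \hookrightarrow \IAut(\what{M}) \stackrel{\det}{\longrightarrow} \Zhat\ps{\what{A}}^{\times'}$ is the abelianization map, so its image is precisely $\what{A}$.

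The heart of the argument is the converse. I first treat $\det(\gamma) = 1$: writing $\gamma = \gamma_r$, the equation $1 + r_1(1-a_2) + r_2(a_1-1) = 1$ reduces to $r_2(a_1-1) = r_1(a_2-1)$. By Proposition \ref{hard_prop} (and the exactness of the Koszul complex remarked on after Corollary \ref{cor_image_of_mu}), the pair $(a_1-1, a_2-1)$ is a regular sequence in $\Zhat\ps{\what{A}}$, so there is a unique $c \in \Zhat\ps{\what{A}}$ with $r_1 = c(a_1-1)$ and $r_2 = c(a_2-1)$. A direct computation using the identity $[y, x_i] = y^{1-a_i}$ for $y \in \what{M}'$ (which follows from the $\Zhat\ps{\what{A}}$-action rule $x_i y x_i^{-1} = y^{a_i}$) shows that conjugation by $[x_1, x_2]^{-c}$ is precisely $\gamma_r$, so $\gamma$ is inner. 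For general $\det(\gamma) = a \in \what{A}$, lift $a$ to some $\tilde{x} \in \what{M}$; conjugation by $\tilde{x}$ has determinant $a$ by step one, so $\gamma$ composed with the inverse of conjugation by $\tilde{x}$ has determinant $1$ and hence is inner, whence so is $\gamma$.

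Assembling: by Proposition \ref{prop_det} the map $\det : \IAut(\what{M}) \to \Zhat\ps{\what{A}}^{\times'}$ is surjective, and combining the two directions above its kernel modulo $\Inn(\what{M})$ is trivial, so $\ol{\det}$ descends to an isomorphism $\IOut(\what{M}) \rightiso \Zhat\ps{\what{A}}^{\times'}/\what{A}$. The exact sequence then comes from the tautological identification $\IOut(\what{M}) = \ker(\Out(\what{M}) \stackrel{\ab_*}{\to} \GL(\what{A}))$; surjectivity of $\ab_*$ onto $\GL_2(\Zhat)$ follows because any basis of $\what{A}$ lifts to a pair of elements of $\what{M}$ that topologically generate (by the argument of Proposition \ref{prop_basic}(c)-(d) applied to the lifts) and hence defines an automorphism via the universal property of $\what{M}$ as a free pro-metabelian group, using that surjective continuous endomorphisms of topologically finitely generated profinite groups are automorphisms. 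The main obstacle is the $\det(\gamma_r) = 1$ step: everything hinges on the regularity of $(a_1-1, a_2-1)$ in $\Zhat\ps{\what{A}}$ to extract the single scalar $c$ from the syzygy and exhibit $\gamma_r$ as an honest inner automorphism.
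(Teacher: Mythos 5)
Your core argument is correct and is essentially the paper's proof: inner automorphisms have determinant equal to their image in $\what{A}$ (the conjugation action on $\what{M}'$ is multiplication by that image), and conversely a determinant-$1$ element $\gamma_r$ satisfies the syzygy $r_2(a_1-1)=r_1(a_2-1)$, from which the coprimality/regularity of $(a_1-1,a_2-1)$ (Proposition \ref{hard_prop}) produces the scalar $c$ with $r_i=c(a_i-1)$, and $\gamma_r$ is then conjugation by $[x_1,x_2]^{-c}$; the general case $\det\in\what{A}$ reduces to this by composing with an inner automorphism. Your sign (conjugation by $[x_1,x_2]^{-c}$) is the correct one.

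The one genuine flaw is your justification of the surjectivity of $\ab_*:\Out(\what{M})\to\GL(\what{A})$. It is not true that a pair of elements of $\what{M}$ lifting a basis of $\what{A}$ must topologically generate $\what{M}$, and the argument of Proposition \ref{prop_basic}(c)--(d) does not transfer to arbitrary lifts: it uses that $x_1,x_2$ is a free generating set to know that the conjugates of $[x_1,x_2]$ generate $\what{M}'$. For arbitrary lifts $y_1,y_2$, the closed subgroup they generate meets $\what{M}'$ only in the $\Zhat\ps{\what{A}}$-submodule generated by $[y_1,y_2]$, which is $\det\cdot\what{M}'$ and can be proper: take $y_i=\gamma_r(x_i)$ for any $r$ with $\det(\gamma_r)\in\Zhat\ps{\what{A}}'\setminus\Zhat\ps{\what{A}}^{\times}$ (such $r$ exist by Proposition \ref{prop_det}); these lift the basis $(a_1,a_2)$, yet if they generated $\what{M}$ then $\gamma_r$ would be a surjective endomorphism, hence an automorphism by Hopficity, contradicting Proposition \ref{prop_det}. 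Indeed, if your claim were true, Proposition \ref{prop_det} would be vacuous. The surjectivity itself is true and easy to repair: the image of $\ab_*$ is a closed subgroup of $\GL(\what{A})\cong\GL_2(\Zhat)$ (as $\Aut(\what{M})$ is profinite and $\ab_*$ is continuous), it contains $\GL_2(\ZZ)$ via $\Aut(F)\to\Aut(\what{M})$, and it contains the matrices $\spmatrix{1}{0}{0}{u}$ for all $u\in\Zhat^\times$, realized by the automorphisms $(x_1,x_2)\mapsto(x_1,x_2^u)$ (well defined by freeness, with inverse $(x_1,x_2)\mapsto(x_1,x_2^{u^{-1}})$, cf.\ Lemma \ref{lemma_diagonal_automorphisms}); these together generate a dense subgroup of $\GL_2(\Zhat)$, so the closed image is everything.
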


\begin{proof} By Example \ref{ex_conjugation}, inner automorphisms have determinant in $\what{A}$, and all determinants in $\what{A}$ are attainable. Conversely, if $\det(\gamma_r)\in\what{A}$, then $\gamma\in\IAut(\what{M})$, and by composing with suitable inner automorphisms, we may assume $\det(\gamma_r) = 1$, in which case we find:
\begin{eqnarray*}
\det(\gamma_r) = 1+r_1(1-a_2)+r_2(a_1-1) & = & 1 \\
r_1(1-a_2)+r_2(a_1-1) & = & 0 \\
r_2(a_1-1) & = & r_1(a_2-1)
\end{eqnarray*}
Using the fact that the $a_i-1\in\Zhat\ps{\what{A}}$ are coprime, as in the proof of Corollary \ref{cor_image_of_mu}, we may find an $\alpha\in\Zhat\ps{\what{A}}$ such that $r_1 = \alpha(a_1-1)$ and $r_2 = \alpha(a_2-1)$. Thus,
$$\gamma_r(x_1) = [x_1,x_2]^{\alpha(a_1-1)}x_1,\qquad \gamma_r(x_2) = [x_1,x_2]^{\alpha(a_2-1)}x_2$$
Now, recall that conjugation of elements in $\what{T}\subset\what{T}\rtimes\what{A}$ by $(t,a)\in\what{T}\rtimes\what{A}$ amounts to multiplication by $a$. Thus, we have $x_1[x_1,x_2]^\alpha x_1^{-1} = [x_1,x_2]^{\alpha a_1}$, whence $[x_1,x_2]^{-\alpha}x_1[x_1,x_2]^\alpha = [x_1,x_2]^{\alpha(a_1-1)}x_1$, and similarly for $x_2$. Thus, $\gamma_r$ is precisely conjugation by $[x_1,x_2]^\alpha$.

\end{proof}

\subsection{General endomorphisms of $\what{M}$}
\begin{defn}[Generalized determinant]\label{def_determinant} Let $c\in M'$ be a $\ZZ[A]$-basis of $M'$, and hence also a $\Zhat\ps{\Ahat}$-basis of $\what{M}'$. For $\gamma\in\End(\what{M})$, let $\det(\gamma)$ be the unique element $\alpha\in\Zhat\ps{\what{A}}^\times$ such that $\gamma(c) = c^\alpha$ (uniqueness follows from the freeness of the $\Zhat\ps{\what{A}}$-module $\what{M}'$, c.f. Corollary \ref{hard_cor}). We call $\det_c(\gamma)$ the \emph{(generalized) Bachmuth determinant} of $\gamma$ relative to the basis $c$ of $M'$. Naturally, $\det_c$ induces a map
$$\ol{\det_c} : \Out(\what{M})\rightarrow\Zhat\ps{\what{A}}^\times/\what{A}$$
which we call the \emph{reduced (generalized) determinant}.
\end{defn}

\begin{remark} While the Bachmuth determinant $\det : \IAEnd(\what{M})\rightarrow\Zhat\ps{\what{A}}$, is canonical, the generalized determinant depends on the choice of basis $c$ of $M'$. It's main usefulness stems from the fact that it is \emph{continuous}, and that the induced map $\ol{\det_c} : \Out(\what{M})\rightarrow\Zhat\ps{\Ahat}^\times/\Ahat$ does not depend on a choice of basis of $M'$.
\end{remark}

Note that any $\gamma\in\End(\what{M})$, inducing $\gamma^\ab\in\End(\what{A})$, acts by functoriality as a ring endomorphism of $\Zhat\ps{\what{A}}$.

\begin{prop}\label{prop_crossed_homomorphism} The generalized determinant is a continuous \emph{crossed} monoid homomorphism:
$$\det_c : \End(\what{M})\longrightarrow\Zhat\ps{\what{A}}\qquad \det_c(\gamma\circ\gamma') = \det_c(\gamma)\cdot\la{\gamma}\det_c(\gamma')$$
whose restriction to $\IAEnd(\what{M})$ agrees with the Bachmuth determinant, and hence is a homomorphism there.
\end{prop}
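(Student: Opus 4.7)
The plan is to reduce all three claims---the crossed monoid homomorphism property, agreement with the Bachmuth determinant on $\IAEnd(\what{M})$, and continuity---to a single functoriality identity for the $\Zhat\ps{\what{A}}$-action on $\what{M}'$: for any $\gamma \in \End(\what{M})$, any $m \in \what{M}'$, and any $r \in \Zhat\ps{\what{A}}$,
$$\gamma(m^r) \;=\; \gamma(m)^{\gamma^\ab(r)},$$
where $\gamma^\ab : \Zhat\ps{\what{A}} \to \Zhat\ps{\what{A}}$ is the ring endomorphism induced by functoriality from the endomorphism of $\what{A}$ induced by $\gamma$. To verify this, first take $r = a \in \what{A}$: picking any lift $\tilde{a} \in \what{M}$, the action is $m^a = \tilde{a} m \tilde{a}^{-1}$, and applying $\gamma$ gives $\gamma(\tilde{a})\gamma(m)\gamma(\tilde{a})^{-1}$, which equals $\gamma(m)^{\gamma^\ab(a)}$ because conjugation in the abelian group $\what{M}'$ by an element of $\what{M}$ depends only on its image in $\what{A}$. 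Linearity over $\ZZ$ extends the identity to $r \in \ZZ[\what{A}]$, and continuity of $\gamma$ and $\gamma^\ab$ (all our homomorphisms being continuous) then extends it to all of $\Zhat\ps{\what{A}}$.

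Granting the functoriality identity, the crossed monoid homomorphism property follows directly. Applying it with $m = c$ and $r = \det_c(\gamma')$,
$$(\gamma \circ \gamma')(c) \;=\; \gamma\bigl(c^{\det_c(\gamma')}\bigr) \;=\; \gamma(c)^{\gamma^\ab(\det_c(\gamma'))} \;=\; c^{\det_c(\gamma)\cdot \gamma^\ab(\det_c(\gamma'))}.$$
By the freeness of $\what{M}'$ as a rank-1 $\Zhat\ps{\what{A}}$-module on $c$ (Corollary \ref{hard_cor}), this identifies $\det_c(\gamma \circ \gamma') = \det_c(\gamma) \cdot \la{\gamma}\det_c(\gamma')$. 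For the IA-statement: if $\gamma \in \IAEnd(\what{M})$, Proposition \ref{prop_alternative_def_of_det} says $\gamma$ acts on $\what{M}'$ by multiplication by its Bachmuth determinant, so $\gamma(c) = c^{\det(\gamma)}$, giving $\det_c|_{\IAEnd(\what{M})} = \det$; since $\gamma^\ab = \id$ there, the crossed action is trivial and $\det_c$ restricts to an honest monoid homomorphism.

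For continuity, the evaluation map $\End(\what{M}) \to \what{M}'$, $\gamma \mapsto \gamma(c)$, is continuous, and by Corollary \ref{hard_cor} the map $\Zhat\ps{\what{A}} \to \what{M}'$, $r \mapsto c^r$, is a continuous bijection between compact Hausdorff profinite groups, hence a homeomorphism. Composing the former with the inverse of the latter realizes $\det_c$ as a continuous map $\End(\what{M}) \to \Zhat\ps{\what{A}}$. The only substantive step is the functoriality identity itself, and even that is essentially a direct computation from the definition of the $\what{A}$-action on $\what{M}'$, upgraded from $\ZZ[\what{A}]$ to $\Zhat\ps{\what{A}}$ by continuity; no genuine obstacle appears.
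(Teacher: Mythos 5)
Your proposal is correct and follows essentially the same route as the paper: verify the identity $\gamma(m^r)=\gamma(m)^{\la{\gamma}r}$ on the dense group algebra by the conjugation formula and extend by continuity, then apply it to $c$ with $r=\det_c(\gamma')$ and use freeness of $\what{M}'$ (Corollary \ref{hard_cor}), citing Proposition \ref{prop_alternative_def_of_det} for the IA case. Your phrasing of the density step at the level of the exponent $r$, and the compact-Hausdorff argument for continuity of $\det_c$, are just slightly more explicit versions of the paper's terse remarks.
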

\begin{proof} Continuity follows from the continuity and the transitivity of the $\Zhat\ps{\what{A}}$-action on $\what{M}'$.

\sgap

Now suppose $\det_c(\gamma') = \sum_{a\in\what{A}}n_a\cdot a$, where $n_a\in\Zhat$ and all but finitely many $n_a$'s are zero. Thus, $\det_c(\gamma')$ lies in the abstract group algebra $\Zhat[\what{A}]$. Then we have
$$\gamma(\gamma'(c)) = \gamma(c^{\sum_a n_a\cdot a}) = \gamma(\prod_a ac^{n_a}a^{-1}) = \prod_a\gamma(a)\gamma(c)^{n_a}\gamma(a)^{-1} = \gamma(c)^{\sum_a n_a\gamma(a)}$$
$$\cdots = \gamma(c)^{\la{\gamma}\left(\sum_a n_a\cdot a\right)} = c^{\det_c(\gamma)\cdot\la{\gamma}\left(\sum_an_a\cdot a\right)} = c^{\det_c(\gamma)\cdot\la{\gamma}\det_c(\gamma')}$$
Since $\Zhat[\what{A}]$ is dense inside $\Zhat\ps{\what{A}}$, and $\det_c$ is continuous, we see that $\det_c$ is indeed a crossed homomorphism. From example \ref{prop_alternative_def_of_det}, we see that it agrees with the Bachmuth determinant on $\IAEnd(\what{M})$, where again we can verify that it is a homomorphism there since the action of $\IAEnd(\what{M})$ on $\Zhat\ps{\what{A}}$ is trivial.
\end{proof}

\begin{lemma}\label{lemma_diagonal_automorphisms} For any $u\in\Zhat^\times$, and any basis $x_1,x_2$ of $\what{M}$, let $c := [x_1,x_2]$, and let $\gamma_u$ be the automorphism of $\what{M}$ sending $(x_1,x_2)\mapsto (x_1,x_2^u)$, then $\gamma_u$ satisfies $\det(\gamma_u^\ab) = \epsilon(\det_{c}(\gamma_u))$, where $\epsilon : \Zhat\ps{\Ahat}\rightarrow\Zhat$ is the augmentation.
\end{lemma}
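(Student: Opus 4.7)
The plan is to compute both sides directly using the Magnus embedding $\mu$ and a density argument, and check that they agree.

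\sgap

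First, on the abelianization $\what{A}\cong\Zhat^2$, the automorphism $\gamma_u^\ab$ sends $a_1\mapsto a_1$ and $a_2\mapsto a_2^u$, which in the $\Zhat$-basis $a_1,a_2$ is the diagonal matrix $\operatorname{diag}(1,u)$; hence $\det(\gamma_u^\ab)=u$. It remains to show that $\epsilon(\det_c(\gamma_u)) = u$ as well.

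\sgap

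To compute $\det_c(\gamma_u)$, I will work inside $\what{T}\rtimes\what{A}$ via $\mu$. The key preliminary step is to describe $\mu(x_2^u)=(t_2,a_2)^u$. For each positive integer $n$ one computes by induction that
$$(t_2,a_2)^n = \big((1+a_2+\cdots+a_2^{n-1})t_2,\,a_2^n\big).$$
Since both sides are continuous in the exponent on $\Zhat$, for any $u\in\Zhat$ we have $(t_2,a_2)^u=(g(u)t_2,a_2^u)$ for a unique continuous element $g(u)\in\Zhat\ps{\what{A}}$, which by construction satisfies $(a_2-1)g(u)=a_2^u-1$. This uniquely determines $g(u)$ because $a_2-1$ is a non–zero-divisor in $\Zhat\ps{\what{A}}$ by Proposition \ref{hard_prop}; informally, $g(u)=(a_2^u-1)/(a_2-1)$.

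\sgap

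Now apply $\gamma_u$ to $c=[x_1,x_2]$: we get $\gamma_u(c)=[x_1,x_2^u]$. Using \ref{prop_basic}(a) to multiply out the commutator of $(t_1,a_1)$ with $(g(u)t_2,a_2^u)$ in $\what{T}\rtimes\what{A}$, a short direct calculation — essentially the computation of \ref{prop_basic}(b) but with $a_2$ replaced by $a_2^u$ and the $t_2$-coefficient scaled by $g(u)$ — gives
$$\mu([x_1,x_2^u]) = \big((1-a_2^u)t_1 + (a_1-1)g(u)t_2,\,1\big).$$
Comparing this to $\mu(c)=((1-a_2)t_1+(a_1-1)t_2,1)$ and using that $\mu(c^\alpha)=\alpha\cdot\mu(c)$ together with freeness of $\what{M}'$ as a $\Zhat\ps{\what{A}}$-module of rank $1$ (Corollary \ref{hard_cor}), both coordinates force $\det_c(\gamma_u)=g(u)$; note the $t_1$ coordinate gives $\alpha(1-a_2)=1-a_2^u$, and the $t_2$ coordinate gives $\alpha(a_1-1)=(a_1-1)g(u)$, which agree by the defining property of $g(u)$.

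\sgap

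Finally, applying the augmentation: the map $u\mapsto\epsilon(g(u))$ is a continuous function $\Zhat\to\Zhat$ which, on positive integers $n$, evaluates to $\epsilon(1+a_2+\cdots+a_2^{n-1})=n$; by density of $\ZZ_{>0}$ in $\Zhat$ and continuity, $\epsilon(g(u))=u$ for all $u\in\Zhat$. This yields $\epsilon(\det_c(\gamma_u))=u=\det(\gamma_u^\ab)$, as required. The only subtle point is making rigorous sense of the ``quotient'' $(a_2^u-1)/(a_2-1)$ for non-integer $u$; this is handled by the uniqueness afforded by $a_2-1$ being a non–zero-divisor, and once that is in hand the rest is routine Magnus-embedding bookkeeping followed by a continuity argument.
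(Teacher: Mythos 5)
Your proof is correct and follows essentially the same route as the paper: both reduce, via continuity and density of the integers in $\Zhat$, to the computation $\det_c(\gamma_n)=1+a_2+\cdots+a_2^{n-1}$ for positive integer exponents, whose augmentation is $n$. The only cosmetic difference is that you carry out the commutator computation inside $\what{T}\rtimes\what{A}$ via the Magnus embedding (and name the limit element $g(u)$ with $(a_2-1)g(u)=a_2^u-1$), whereas the paper uses the identity $[x,yz]=[x,y]y[x,z]y^{-1}$ directly in $\what{M}$ and the continuity of $\det_c$.
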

\begin{proof} We need to show that $\epsilon(\det_{c}(\gamma_u)) = \det(\gamma_u^\ab)$. Clearly $\det(\gamma_u^\ab) = \det(\spmatrix{1}{0}{0}{u}) = u$, so we need to show $\epsilon(\det_{c}(\gamma_u)) = u$. Consider the map $\Zhat\rightarrow\End(\what{M})$ given by
$$n\quad\mapsto\quad \gamma_n:\left\{\begin{array}{lcl}x_1 & \mapsto & x_1 \\ x_2 & \mapsto & x_2^n\end{array}\right.\qquad\text{for any $n\in\Zhat$}$$
This is a continuous monoid homomorphism, so it suffices to show that for any $n\in\ZZ$, the composition
$$\begin{array}{ccccccl}
\Zhat & \longrightarrow & \End(\what{M}) & \stackrel{\det_{c}}{\longrightarrow} & \Zhat\ps{\what{A}} & \stackrel{\epsilon}{\longrightarrow} & \Zhat \\
n & \mapsto & \gamma_n & \mapsto & \det(\gamma_n) & \mapsto & \epsilon(\det(\gamma_n))
\end{array}$$
is the identity. This follows from a straightforward computation. In any group $A$, given $x,y,z\in A$, we have $[x,yz] = [x,y]y[x,z]y^{-1}$. Thus, in $\what{M}$, we have for any $n\ge 1$,
$$[x_1,x_2^n] = [x_1,x_2][x_1,x_2^{n-1}]^{a_2},\qquad\text{and iterating...}\qquad [x_1,x_2^n] = [x_1,x_2]^{1+a_2+a_2^2+ \cdots+a_2^{n-1}}$$
This shows that $\det_c(\gamma_n) = 1+a_2+a_2^2+\cdots+a_2^{n-1}$, which has augmentation $n$ since $\epsilon(a_2) = 1$.
\end{proof}

\begin{cor} If $\gamma\in\Aut(\what{M})$, then for any choice of generators $x,y$ of $\what{M}$, $\gamma([x,y]) = [x,y]^u$ for some unit $u\in\Zhat\ps{\what{A}}^\times$.
\end{cor}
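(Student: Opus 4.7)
The plan is to exploit the fact that $\what{M}'$ is a characteristic subgroup of $\what{M}$, so any $\gamma \in \Aut(\what{M})$ restricts to a bijection on $\what{M}'$. By Corollary \ref{hard_cor}, for any topological generators $x, y$ of $\what{M}$, the commutator $c := [x,y]$ is a $\Zhat\ps{\what{A}}$-basis of $\what{M}'$ as a free module of rank one. Thus we may uniquely write $\gamma(c) = c^u$ for a single $u \in \Zhat\ps{\what{A}}$, and the content of the corollary is that this $u$ is actually a unit.

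The key observation is that $\gamma$ acts semilinearly on $\what{M}'$ over the continuous ring automorphism $\sigma : \Zhat\ps{\what{A}} \to \Zhat\ps{\what{A}}$ induced by functoriality from $\gamma^{\ab} \in \GL(\Ahat)$ (an automorphism because $\gamma$ is invertible). Concretely, for $a \in \what{A}$ and $m \in \what{M}'$, one computes $\gamma(a \cdot m) = \gamma(\tilde{a} m \tilde{a}^{-1}) = \sigma(a) \cdot \gamma(m)$ for any lift $\tilde{a}$ of $a$ to $\what{M}$. Extending by continuity and the density of $\Zhat[\what{A}]$ in $\Zhat\ps{\what{A}}$ yields the identity $\gamma(c^r) = c^{u\, \sigma(r)}$ for every $r \in \Zhat\ps{\what{A}}$.

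Since $\gamma|_{\what{M}'}$ is a bijection and $c$ freely generates $\what{M}'$, the map $r \mapsto u\,\sigma(r)$ must be a bijection of $\Zhat\ps{\what{A}}$. Composing with the bijection $\sigma^{-1}$ shows that multiplication by $u$ is a bijection of $\Zhat\ps{\what{A}}$, which in a commutative ring forces $u \in \Zhat\ps{\what{A}}^\times$.

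No serious obstacle is anticipated, as the argument reduces to a routine application of semilinearity combined with the freeness result of Corollary \ref{hard_cor}. A slightly more slick variant would invoke Proposition \ref{prop_crossed_homomorphism} applied to the pair $(\gamma, \gamma^{-1})$: the crossed homomorphism identity $1 = \det_c(\id_{\what{M}}) = \det_c(\gamma) \cdot \la{\gamma}\det_c(\gamma^{-1})$ immediately exhibits an explicit two-sided inverse for $u = \det_c(\gamma)$ in $\Zhat\ps{\what{A}}$, provided one first remarks that the proof of Proposition \ref{prop_crossed_homomorphism} only uses the semilinearity and the rank-one freeness over $\Zhat\ps{\what{A}}$, and so applies equally well to the basis $c = [x,y]$ coming from any topological generating pair.
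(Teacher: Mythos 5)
Your argument is correct, but it takes a different route from the paper. The paper fixes the basis $c:=[x_1,x_2]\in M'$, writes $[x,y]=\sigma([x_1,x_2])$ for the automorphism $\sigma$ carrying $x_1,x_2$ to $x,y$, and then applies the crossed-homomorphism identity of Proposition \ref{prop_crossed_homomorphism} to $\gamma\circ\sigma$, obtaining the explicit unit $u=\det_c(\sigma)^{-1}\det_c(\gamma)\,\la{\gamma}\det_c(\sigma)$ (the fact that $\det_c$ of an automorphism is a unit being itself a consequence of the same identity applied to $\gamma\circ\gamma^{-1}=\id$). You instead work directly with the basis $c=[x,y]$ of $\what{M}'$ furnished by Corollary \ref{hard_cor}, observe that $\gamma$ restricts to a bijection of the characteristic subgroup $\what{M}'$ which is semilinear over the ring automorphism $\sigma$ of $\Zhat\ps{\what{A}}$ induced by $\gamma^{\ab}$, so that $\gamma(c^r)=c^{u\sigma(r)}$, and conclude from surjectivity and rank-one freeness that multiplication by $u$ is onto, hence $u$ is a unit. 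Both arguments rest on the same two inputs (freeness of $\what{M}'$ and the semilinear compatibility of $\gamma$ with the $\Zhat\ps{\what{A}}$-action, which is exactly the computation in the proof of \ref{prop_crossed_homomorphism}); what differs is the packaging. The paper's version keeps everything expressed through the canonical generalized determinant $\det_c$ and yields an explicit formula for $u$ in terms of it, which is what gets reused later (e.g.\ in \ref{prop_canonical_determinant} and \ref{cor_surjective_gdet}); your version is more self-contained, avoids the auxiliary change-of-generators automorphism, and makes clear that the statement is really a general fact about semilinear bijections of a free rank-one module. Your closing remark is also accurate: the definition of $\det_c$ and the proof of \ref{prop_crossed_homomorphism} use only that $c$ is a $\Zhat\ps{\what{A}}$-basis of $\what{M}'$ (as the paper itself notes in the remark following \ref{prop_canonical_determinant}), so the slick variant $1=\det_c(\id)=\det_c(\gamma)\cdot\la{\gamma}\det_c(\gamma^{-1})$ with $c=[x,y]$ is legitimate and is essentially the paper's proof with the change of basis stripped out.
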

\begin{proof} Let $\sigma$ be the automorphism sending $x_1,x_2$ to $x,y$, and $c := [x_1,x_2]$. Then, since $\det_c$ is a crossed homomorphism, $\det_c(\gamma),\det_c(\sigma)$ are units, and we have
$$\gamma([x,y]) = (\gamma\circ\sigma)([x_1,x_2]) = [x_1,x_2]^{\det_c(\gamma)\cdot\la{\gamma}\det_c(\sigma)} = [x,y]^{\det_c(\sigma)^{-1}\det_c(\gamma)\la{\gamma}\det_c(\sigma)}$$
but the action of $\gamma$ on $\Zhat\ps{\what{A}}$ sends units to units, so we can take $u = \det_c(\sigma)^{-1}\det_c(\gamma)\la{\gamma}\det_c(\sigma)$.
\end{proof}

\begin{cor}\label{cor_surjective_gdet} For any basis $c\in M'$, the generalized determinant $\det_c : \Aut(\what{M})\rightarrow\Zhat\ps{\what{A}}^\times$ is surjective, and every $\Zhat\ps{\Ahat}$-basis of $\what{M}'$ is given by the commutator of a generating pair of $\what{M}$.
\end{cor}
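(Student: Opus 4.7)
The plan is to prove surjectivity first, then deduce the statement about bases of $\what{M}'$ as an essentially formal corollary.

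For surjectivity of $\det_c : \Aut(\what{M}) \to \Zhat\ps{\what{A}}^\times$, the strategy is to combine two complementary families of automorphisms: the IA-automorphisms, which by Proposition \ref{prop_det} have determinants filling out exactly the special units $\Zhat\ps{\what{A}}^{\times'}$, and the ``diagonal'' automorphisms $\gamma_u : (x_1,x_2) \mapsto (x_1, x_2^u)$ from Lemma \ref{lemma_diagonal_automorphisms}, which via the augmentation $\epsilon$ detect the scalar part $\Zhat^\times$. Concretely, given a target unit $v \in \Zhat\ps{\what{A}}^\times$, I would use the decomposition $\Zhat\ps{\what{A}}^\times \cong \Zhat^\times \times \Zhat\ps{\what{A}}^{\times'}$ from Definition \ref{def_distinguished} to write $v = us$ with $u$ scalar and $s$ special. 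Choosing $\gamma_u$ gives $\det_c(\gamma_u) = u \cdot s_0$ for some special unit $s_0$ (by Lemma \ref{lemma_diagonal_automorphisms}, the augmentation of $\det_c(\gamma_u)$ is $u$). Then picking an IA-automorphism $\alpha$ with $\det_c(\alpha) = s/s_0 \in \Zhat\ps{\what{A}}^{\times'}$ (possible by \ref{prop_det}) and invoking the crossed-homomorphism identity from Proposition \ref{prop_crossed_homomorphism}—noting that $\la{\alpha}$ acts trivially on $\Zhat\ps{\what{A}}$ because $\alpha$ is IA—yields $\det_c(\alpha \circ \gamma_u) = \det_c(\alpha) \cdot \det_c(\gamma_u) = (s/s_0)\cdot(u s_0) = us = v$.

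For the second claim, let $c = [x_1, x_2]$ for some generating pair of $M$, which is simultaneously a $\ZZ[A]$-basis of $M'$ by Proposition \ref{prop_basic}(d) and a $\Zhat\ps{\what{A}}$-basis of $\what{M}'$ by Corollary \ref{hard_cor}. Any other $\Zhat\ps{\what{A}}$-basis $c'$ of $\what{M}'$ has the form $c' = c^v$ for a unique unit $v \in \Zhat\ps{\what{A}}^\times$. By the surjectivity just established, there is $\gamma \in \Aut(\what{M})$ with $\det_c(\gamma) = v$, and then by Definition \ref{def_determinant} $\gamma(c) = c^v = c'$. Since $\gamma$ is an automorphism of $\what{M}$ and commutators are preserved by homomorphisms, $(\gamma(x_1), \gamma(x_2))$ is a generating pair of $\what{M}$ with commutator equal to $c'$.

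The main obstacle is really already absorbed into the previous results: the hard content is the image computation of $\det$ on $\IAut(\what{M})$ (Proposition \ref{prop_det}, which depends on the coprimality statement of Proposition \ref{hard_prop}) and the augmentation computation for diagonal automorphisms (Lemma \ref{lemma_diagonal_automorphisms}). Granted these, the corollary amounts to verifying that the images of two explicit sources of automorphisms generate all of $\Zhat\ps{\what{A}}^\times$ under composition, which is essentially formal from the crossed-homomorphism property of $\det_c$.
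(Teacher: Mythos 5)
Your proposal is correct and follows essentially the same route as the paper: the paper's one-line proof cites Proposition \ref{prop_det} (IA-automorphisms realize all special units) and Lemma \ref{lemma_diagonal_automorphisms} (the diagonal automorphisms $\gamma_u$ supply the scalar part), combined via the crossed-homomorphism property, exactly as you spell out, with the second claim deduced from surjectivity just as you do.
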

\begin{proof} The surjectivity follows from \ref{prop_det} and \ref{lemma_diagonal_automorphisms}, and the second statement follows from the surjectivity.
\end{proof}


\begin{example}\label{ex_SL2Z} Recall that $F$ is the free group on the generators $x_1,x_2$, and let $M := F/F''$, both having abelianization $A := F/F' \cong \ZZ^2$. There is a canonical isomorphism $\Out(F)\rightiso\GL(A)$ (c.f. \cite{MKS04} \S3.5) which we will use to identify the two groups. Let $\SAut(F)$ be the preimage of $\SL(A)\cong\SL_2(\ZZ)$ under the map $\Aut(F)\rightarrow\Out(F) = \GL(A)$.  It can be computed that $\SAut(F)$ is generated by the following elements:
$$\gamma_E : \left\{\begin{array}{ll} x_1 \mapsto x_2 \\
x_2\mapsto x_1^{-1}
\end{array}\right.\qquad \gamma_T : \left\{\begin{array}{ll}
x_1\mapsto x_2x_1 \\
x_2\mapsto x_2
\end{array}\right.
$$
We will identify $\gamma_E,\gamma_T$ with their images in $\Aut(\what{M})$ by the natural map $\Aut(F)\rightarrow\Aut(\what{M})$. Note that these are \emph{not} IA-automorphisms. Let $c := [x_1,x_2]\in M'$. In $\what{M}$, we can compute:
$$\gamma_E([x_1,x_2]) = x_1^{-1}[x_1,x_2]x_1 = [x_1,x_2]^{a_1^{-1}}\qquad \gamma_T([x_1,x_2]) = x_2[x_1,x_2]x_2^{-1} = [x_1,x_2]^{a_2}$$
Thus, we have $\det_c(\gamma_E) = a_1^{-1},\det_c(\gamma_T) = a_2$, and since $\det_c$ is continuous and $\what{A}$ is a closed subset of $\Zhat\ps{\what{A}}^\times$, we find that $\det_c(\what{\SAut(F)})\subset\what{A}$.

\sgap

Similarly, identifying $\GL(A) = \Out(F)$, we have a natural map $\SL(A)\subset GL(A)=\Out(F)\rightarrow\Out(\what{M})$, which one sees is an injection since the composition
$$\SL(A)\rightarrow\Out(\what{M})\stackrel{\ab_*}{\rightarrow}\Out(\what{A}) = \GL(\what{A})$$
is just the natural inclusion $\SL(A)\subset\GL(\what{A})$. Thus, viewing $\what{\SL(A)}$ as a subgroup of $\Out(\what{M})$, the above shows that $\ol{\det_c}(\what{\SL(A)}) = \{1\}\subset\Zhat\ps{\what{A}}^\times/\what{A}$, and more generally, $\ol{\det_c}(\what{\GL(A)}) = \{1,-1\}\subset \Zhat\ps{\Ahat}^\times/\Ahat$.
\end{example}

\begin{prop}\label{prop_canonical_determinant} The reduced generalized determinant $\ol{\det_c} : \Out(\what{M})\rightarrow\Zhat\ps{\Ahat}^\times/\Ahat$ is independent of a choice of basis $c$ of $M'$, and hereafter we will sometimes refer to it simply as $\ol{det}$.
\end{prop}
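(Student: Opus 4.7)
The plan is to show that for any two $\ZZ[A]$-bases $c, c' \in M'$ and any $\gamma \in \Aut(\what{M})$, the determinants $\det_c(\gamma)$ and $\det_{c'}(\gamma)$ differ multiplicatively by an element of $\Ahat$, so that their classes in $\Zhat\ps{\what{A}}^\times/\what{A}$ coincide.

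The starting point is Proposition \ref{prop_basic}(d), according to which $M'$ is a free $\ZZ[A]$-module of rank $1$. Since $\ZZ[A] \cong \ZZ[a_1^{\pm 1}, a_2^{\pm 1}]$ has unit group $\{\pm 1\}\times A$, I would deduce that $c' = c^\lambda$ for some $\lambda \in \{\pm 1\}\times A$, which I view as a unit in $\Zhat\ps{\what{A}}^\times$. By Corollary \ref{hard_cor}, $\what{M}'$ is then also free of rank $1$ on $c$ as a $\Zhat\ps{\what{A}}$-module.

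Next I would compute $\gamma(c')$ in two different ways. On one hand, directly from the definition (and using commutativity of $\Zhat\ps{\what{A}}$),
$$\gamma(c') = (c')^{\det_{c'}(\gamma)} = c^{\lambda\cdot\det_{c'}(\gamma)}.$$
On the other hand, $\gamma$ acts semilinearly on $\what{M}'$ relative to the ring endomorphism $\la{\gamma}(-)$ of $\Zhat\ps{\what{A}}$ induced by $\gamma^\ab$: for $m\in\what{M}'$ and $r\in\Zhat\ps{\what{A}}$ one has $\gamma(m^r) = \gamma(m)^{\la{\gamma}r}$, because $\gamma$ carries conjugation by a lift of $a\in\Ahat$ to conjugation by a lift of $\gamma^\ab(a)$. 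Applied to $m=c$, $r=\lambda$, this yields
$$\gamma(c') = \gamma(c^\lambda) = \gamma(c)^{\la{\gamma}\lambda} = c^{\det_c(\gamma)\cdot\la{\gamma}\lambda}.$$
Comparing the two expressions and invoking freeness of $\what{M}'$ on $c$, I obtain
$$\det_{c'}(\gamma) = \lambda^{-1}\cdot \la{\gamma}\lambda \cdot \det_c(\gamma).$$

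Finally I would check that $\lambda^{-1}\la{\gamma}\lambda \in \Ahat$. Writing $\lambda = \epsilon a$ with $\epsilon\in\{\pm 1\}$ and $a\in A$, the action of $\gamma$ on $\Zhat\ps{\what{A}}$ factors through $\gamma^\ab \in \End(\Ahat)$; this fixes the scalar $\epsilon$ and sends $a\mapsto\gamma^\ab(a)\in\Ahat$. The signs cancel and $\lambda^{-1}\la{\gamma}\lambda = a^{-1}\gamma^\ab(a)\in\Ahat$, so $\det_{c'}(\gamma)$ and $\det_c(\gamma)$ agree modulo $\Ahat$, proving $\ol{\det_{c'}} = \ol{\det_c}$. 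The only delicate point is justifying the semilinearity identity $\gamma(m^r) = \gamma(m)^{\la{\gamma}r}$, which reduces to the functoriality of the conjugation action of $\Ahat$ on $\what{M}'$ under the endomorphism $\gamma$; once this is in hand, the rest of the argument is the short calculation above.
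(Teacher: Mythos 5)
Your proof is correct and is essentially the paper's argument: both come down to the fact that two bases of $M'$ differ by a unit $\lambda\in\ZZ[A]^\times=\{\pm1\}\times A$, together with the semilinearity $\gamma(c^\lambda)=\gamma(c)^{\la{\gamma}\lambda}$ and the observation that $\gamma$ fixes $\pm1$ and sends $\Ahat$ into $\Ahat$, so the two determinants differ by $a^{-1}\gamma^{\ab}(a)\in\Ahat$. The only cosmetic difference is that the paper writes $c'=\alpha(c)$ for $\alpha\in\Aut(M)$ and invokes the crossed-homomorphism identity of Proposition \ref{prop_crossed_homomorphism} (with $\det_c(\alpha)=\lambda$), whereas you apply the underlying semilinear computation directly to $\lambda$.
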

\begin{proof} Any other basis of $M'$ can be written as $\alpha(c)$ for some $\alpha\in\Aut(M)$, with image $\ol{\alpha}\in\Out(M) = \GL(A)$. Then, we have
$$\ol{\det_c}(\gamma\circ\alpha) = \ol{\det_c}(\gamma)\cdot\la{\gamma}\ol{\det_c}(\alpha) = \ol{\det_c}(\gamma)\cdot\ol{\det_c}(\alpha)$$
where the last equality follows from the fact that $-1,1\in\Zhat\ps{\Ahat}$ are fixed by the action of $\gamma$ (c.f. \ref{ex_SL2Z}). In particular, this shows that there is some $a\in\Ahat$ with
$$\gamma(\alpha(c)) = c^{a\det_c(\gamma)\det_c(\alpha)} = (c^{\det_c(\alpha)})^{a\det_c(\gamma)} = \alpha(c)^{a\det_c(\gamma)}$$
which shows that $\ol{\det_c} = \ol{\det_{\alpha(c)}}$, which is what we wanted to prove.
\end{proof}

\begin{remark} More generally, we could have defined the generalized determinant relative to any basis of $\what{M}'$ (not requiring that it be a basis of the subgroup $M'$). In this case, since any basis of $\what{M}'$ generates as a $\ZZ[A]$-module a subgroup isomorphic to $M'$, the proposition should be interpreted as saying that the generalized determinant only depends on the choice of a dense embedding $M\hookrightarrow\what{M}$. In our setting, since we defined $\what{M}$ as the profinite completion of $M$, such an embedding comes with the data of $\what{M}$, and hence we may view $\ol{\det}$ as giving a canonical crossed homomorphism $\ol{\det} : \Out(\what{M})\rightarrow \Zhat\ps{\Ahat}^\times/\Ahat$.
\end{remark}

\begin{cor}\label{good_cor} Two generating pairs $x_1',x_2'$ and $x_1'',x_2''$ of $\what{M}$ which agree in the abelianization $\what{A}$ are simultaneously conjugate in $\what{M}$ if and only if their commutators $[x_1',x_2'],[x_1'',x_2'']$ are conjugate.
\end{cor}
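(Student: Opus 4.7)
The ``only if'' direction is immediate: if $x_i'' = hx_i'h^{-1}$ for $i=1,2$, then $[x_1'',x_2''] = h[x_1',x_2']h^{-1}$. For the substantive ``if'' direction, my plan is to realize the passage from one generating pair to the other as an IA-automorphism of $\what{M}$, and then use the conjugacy of the commutators to force this IA-automorphism to be inner via Proposition~\ref{prop_IA_is_inner}.

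Concretely, since $x_i'$ and $x_i''$ have the same image in $\what{A}$, their ratio $x_i''(x_i')^{-1}$ lies in $\what{M}'$. By Corollary~\ref{hard_cor}, $\what{M}'$ is free of rank $1$ as a $\Zhat\ps{\what{A}}$-module on the basis $c := [x_1',x_2']$, so there exist unique $r_1,r_2 \in \Zhat\ps{\what{A}}$ with $x_i'' = c^{r_i}\cdot x_i'$. The parametrization of \S\ref{ss_IAEnds} then produces an IA-endomorphism $\gamma_r \in \IAEnd(\what{M})$ with $\gamma_r(x_i') = x_i''$. Since the pair $x_1'',x_2''$ is assumed to generate $\what{M}$ topologically, $\gamma_r$ is surjective; as $\what{M}$ is a finitely generated profinite group (hence Hopfian), $\gamma_r$ is an automorphism. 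Equivalently, by Proposition~\ref{prop_det}, $\det(\gamma_r) \in \Zhat\ps{\what{A}}^{\times'}$.

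Now I will use the hypothesis. By Proposition~\ref{prop_alternative_def_of_det}, $\gamma_r$ acts on the rank-$1$ module $\what{M}'$ as multiplication by $\det(\gamma_r)$, so
\[
[x_1'',x_2''] = [\gamma_r(x_1'),\gamma_r(x_2')] = \gamma_r([x_1',x_2']) = c^{\det(\gamma_r)}.
\]
On the other hand, if $[x_1'',x_2''] = g[x_1',x_2']g^{-1}$ for some $g \in \what{M}$ with image $\bar g \in \what{A}$, then by the description of the $\what{A}$-action (conjugation on $\what{M}'$ factors through $\what{A}$, cf.\ Proposition~\ref{prop_basic}(a)), we have $g c g^{-1} = c^{\bar g}$. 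Freeness of $\what{M}'$ over $\Zhat\ps{\what{A}}$ forces $\det(\gamma_r) = \bar g \in \what{A}$.

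The conclusion is then a direct application of Proposition~\ref{prop_IA_is_inner}: since $\det(\gamma_r)\in\what{A}$, the IA-automorphism $\gamma_r$ is inner, say conjugation by some $h \in \what{M}$, and this gives $x_i'' = \gamma_r(x_i') = hx_i'h^{-1}$ for $i = 1, 2$, which is the desired simultaneous conjugacy. The only step requiring any genuine input beyond bookkeeping is the identification $\det(\gamma_r) = \bar g$, which rests on the freeness of $\what{M}'$ established in Corollary~\ref{hard_cor}; everything else is a routine application of results already proved.
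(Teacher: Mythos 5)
Your proof is correct and follows essentially the same route as the paper: both construct the IA-automorphism carrying $(x_1',x_2')$ to $(x_1'',x_2'')$, identify its action on $[x_1',x_2']$ with multiplication by its Bachmuth determinant, use freeness of $\what{M}'$ to conclude that conjugacy of the commutators is equivalent to the determinant lying in $\what{A}$, and finish with Proposition~\ref{prop_IA_is_inner}. The only difference is that you spell out the existence of the automorphism via the parametrization of \S\ref{ss_IAEnds} and Hopfian-ness, which the paper takes for granted.
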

\begin{proof} Since the generating pairs agree in $\what{A}$, the unique automorphism $\gamma\in\Aut(\what{M})$ sending $(x_1',x_2')\mapsto (x_1'',x_2'')$ belongs to $\IAut(\what{M})$. The two pairs are conjugate if and only if $\gamma$ is inner, which by Proposition \ref{prop_IA_is_inner} is true if and only if $\det(\gamma)\in\what{A}$, but by definition $[x_1'',x_2''] = \gamma([x_1',x_2']) = [x_1',x_2']^{\det(\gamma)}$, so $\det(\gamma)\in\what{A}$ if and only if $[x_1',x_2']$ is conjugate to $[x_1'',x_2'']$.
\end{proof}


\section{Main Results}\label{section_main_results}

\subsection{Metabelian groups are congruence}\label{ss_metabelian_congruence}
Recall that $F$ is a (discrete) free group of rank 2 with abelianization $A$, and as usual we use the canonical isomorphism $\Out(F)\rightiso\GL(A)$ to identify $\Out(F) = \GL(A)$ (c.f. \cite{MKS04} \S3.5). By a result of Asada \cite{Asa01}, the natural map $\SL(A)\subset\GL(A)=\Out(F)\hookrightarrow\Out(\what{F})$ induces an \emph{injection} $\what{\SL(A)}\hookrightarrow\Out(\what{F})$. This is called the \emph{congruence subgroup property} for $\Aut(F)$. Composing with the natural map $\Out(\what{F})\rightarrow\Out(\what{M})$, we obtain a representation:
$$\what{\SL(A)}\rightarrow\Out(\what{M})$$

\begin{thm}[Metabelian groups are congruence]\label{thm_metabelian_congruence} The image of the outer representation
$$\what{\SL_2(\ZZ)} \cong \widehat{\SL(A)}\rightarrow\Out(\what{M})$$
is precisely $\SL(\what{A})\cong \SL_2(\Zhat)$. In what follows we will often view $\SL(\Ahat)$ as a subgroup of $\Out(\what{M})$.
\end{thm}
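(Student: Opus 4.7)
The plan is to leverage the short exact sequence
\[
1\longrightarrow\Zhat\ps{\what{A}}^{\times'}/\what{A}\stackrel{\ol{\det}^{-1}}{\longrightarrow}\Out(\what{M})\stackrel{\ab_*}{\longrightarrow}\GL_2(\Zhat)\rightarrow 1
\]
from Proposition~\ref{prop_IA_is_inner}, together with the computation from Example~\ref{ex_SL2Z} that the standard generators $\gamma_E,\gamma_T$ of $\SAut(F)$ satisfy $\det_c(\gamma_E)=a_1^{-1}$ and $\det_c(\gamma_T)=a_2$, both lying in $\what{A}$. The first step is to observe that composing $\what{\SL(A)}\to\Out(\what{M})$ with $\ab_*$ gives the natural continuous map $\what{\SL(A)}\to\SL(\what{A})\subset\GL(\what{A})$, which is surjective because $\SL(A)$ is dense in $\SL(\what{A})$. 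It therefore suffices to show that the image of $\what{\SL(A)}$ meets $\ker(\ab_*)=\IOut(\what{M})$ trivially, or equivalently (using the isomorphism $\ol{\det}:\IOut(\what{M})\rightiso\Zhat\ps{\what{A}}^{\times'}/\what{A}$ from Proposition~\ref{prop_IA_is_inner}) that the image of the Asada congruence kernel of $\what{\SL(A)}$ in $\Out(\what{M})$ is trivial.

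To carry this out I would actually show the stronger statement that $\ol{\det}$ vanishes on the entire image of $\what{\SL(A)}$. The subset $\what{A}\subset\Zhat\ps{\what{A}}^\times$ is closed, closed under multiplication, and stable under the $\Aut(\what{M})$-action (which factors through $\GL(\what{A})$ acting on $\what{A}$). Since $\det_c$ is a continuous crossed homomorphism (Proposition~\ref{prop_crossed_homomorphism}), the preimage $\det_c^{-1}(\what{A})$ is a closed subgroup of $\Aut(\what{M})$; by Example~\ref{ex_SL2Z} (together with Example~\ref{ex_conjugation} for inner automorphisms) it contains $\gamma_E,\gamma_T$ and all of $\Inn(F)$, hence all of $\SAut(F)$. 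Passing to $\Out(\what{M})$, the closed subset $\ol{\det}^{-1}(1)$ contains the image of $\SL(A)$. Because $\SL(A)$ is dense in $\what{\SL(A)}$ and the composed map to $\Out(\what{M})$ is continuous, the image of $\what{\SL(A)}$ is contained in the closure of the image of $\SL(A)$, hence in $\ol{\det}^{-1}(1)$. The congruence kernel therefore maps to the identity element of $\IOut(\what{M})$, so $\what{\SL(A)}\to\Out(\what{M})$ descends to an injection $\SL(\what{A})\hookrightarrow\Out(\what{M})$ whose image is precisely $\SL(\what{A})$.

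The theorem is essentially a clean consequence of machinery already in place, so the logical obstacle is modest. The subtle step is the passage from the discrete computation of Bachmuth determinants on the generators $\gamma_E,\gamma_T$ to the full profinite group $\what{\SL(A)}$. This relies on three ingredients that are already available: the continuity of $\det_c$, the closedness and $\Aut(\what{M})$-invariance of $\what{A}$ inside $\Zhat\ps{\what{A}}^\times$, and the characterization of $\IOut(\what{M})$ via $\ol{\det}$ in Proposition~\ref{prop_IA_is_inner}. Asada's congruence subgroup property for $\Aut(F)$ is invoked only insofar as it ensures the map $\what{\SL(A)}\to\Out(\what{M})$ is well-defined; it is notably not used to control the congruence kernel directly, which is instead killed by the determinant criterion above.
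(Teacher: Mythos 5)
Your proposal is correct and follows essentially the same route as the paper: it uses the computation of Example~\ref{ex_SL2Z} (determinants of $\gamma_E,\gamma_T$ lie in $\what{A}$) together with continuity of the crossed homomorphism $\det_c$ to conclude that the image of $\what{\SL(A)}$ has reduced determinant trivial, and then kills the congruence kernel via Proposition~\ref{prop_IA_is_inner}. The only difference is cosmetic: the paper lifts to $\what{\SAut(F)}$ and the subgroup $U(\infty)$ inside $\Aut(\what{M})$, while you phrase the same argument directly in $\Out(\what{M})$ using $\ol{\det}$ and the isomorphism $\IOut(\what{M})\cong\Zhat\ps{\what{A}}^{\times'}/\what{A}$.
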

\begin{proof} Since $\what{\SL(A)}$ surjects onto $\SL(\what{A})$, it suffices to show that the representation factors through $\SL(A)$. Let $\SAut(F)$ be the preimage of $\SL(A)$ under the map $\Aut(F)\stackrel{\ab}{\longrightarrow}\Out(F) = \GL(A)$. From \ref{ex_SL2Z}, we see that the image of $\what{\SAut(F)}$ also has determinant in $\what{A}$. In particular, for $n\ge 1$, let $U(n)\le\SAut(F)$ be the preimage of the principal congruence subgroup $\Gamma(n) := \Ker(\SL(A)\rightarrow \SL(A/nA))$ in $\SAut(F)$, and let $U(\infty) := \bigcap_{n\ge 1}\ol{U(n)}\le\what{\SAut(F)}$. Then $U(\infty)$ is the closed subgroup of $\what{\SAut(F)}$ which fits into the commutative diagram
\[\begin{tikzcd}
1\arrow[r] & U(\infty)\arrow[r]\arrow[d] & \what{\SAut(F)}\arrow[r,"\ab_*"]\arrow[d] & \SAut(\what{A})\arrow[r]\arrow[d,equal] & 1\\
1\arrow[r] & \bigcap_{n\ge1}\ol{\Gamma(n)}\arrow[r] & \widehat{\SL(A)}\arrow[r] & \SL(\what{A})\arrow[r] & 1
\end{tikzcd}\]
where the vertical maps are given by modding out by inner automorphisms. Thus, $U(\infty)$ is the preimage of the ``congruence kernel'' $\bigcap_{n\ge 1}\ol{\Gamma(n)}$ of $\SL(\what{A})$. Since the image of $U(\infty)$ in $\Aut(\what{M})$ lies in $\IAut(\what{M})$ and have determinants in $\what{A}$, by Proposition \ref{prop_IA_is_inner}, we find that the image of $U(\infty)$ in $\Aut(\what{M})$ consists only of inner automorphisms. In particular, the outer action of $\what{\SL(A)}$ on $\what{M}$ factors through $\SL(\Ahat)$.
\end{proof}

\begin{cor}\label{cor_metabelian_congruence} For any finite 2-generated metabelian group $G$, the set
$$\Epi^\ext(F,G) := \{\text{Surjective homomorphisms $\varphi : F\rightarrow G$ modulo conjugation}\}$$
admits a left action by $\SL_2(\ZZ)\cong\SL(A)\subset\Out(F)$ acting on the domain, and for any surjection $\varphi : F\rightarrow G$, the stabilizer $\Gamma_\varphi\subset\SL_2(\ZZ)$ of its conjugacy class is a congruence subgroup.
\end{cor}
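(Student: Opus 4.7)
The corollary will follow from Theorem \ref{thm_metabelian_congruence} together with two observations: (i) $\Epi^\ext(F,G) = \Epi^\ext(\what{M},G)$ as $\Out(F)$-sets (via the natural factorization through $\Out(\what{M})$), and (ii) the stabilizer of any class in $\Epi^\ext(\what{M},G)$ is an open subgroup of $\Out(\what{M})$. Once these are in hand, $\Gamma_\varphi$ is the preimage in $\SL(A) = \SL_2(\ZZ)$ of an open subgroup of $\SL(\Ahat) \cong \SL_2(\Zhat)$, and hence contains $\Gamma(n)$ for some $n$.

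First I would establish (i). Since $G$ is metabelian, any $\varphi : F \twoheadrightarrow G$ kills $F''$ and factors through $M = F/F''$; since $G$ is finite, this in turn extends uniquely to a continuous surjection $\hat\varphi : \what{M} \twoheadrightarrow G$. Conjugation classes match because $M$ is dense in $\what{M}$, and $\Out(F)$-equivariance is immediate from the fact that both actions are by precomposition with (outer) automorphisms, compatibly with the canonical map $\Out(F) \to \Out(M) \to \Out(\what{M})$.

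For (ii), I would note that $\Epi^\ext(\what{M},G)$ is a finite set, since there are only finitely many open normal subgroups of $\what{M}$ of index dividing $|G|$; let $N$ be their intersection, a characteristic open normal subgroup of $\what{M}$. Then every surjection $\what{M} \twoheadrightarrow G$ factors through the finite group $\what{M}/N$, and the entire action of $\Aut(\what{M})$ on $\Epi^\ext(\what{M},G)$ factors through the finite quotient $\Aut(\what{M}/N)$. In particular, stabilizers of elements in $\Out(\what{M})$ are open.

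Combining: by Theorem \ref{thm_metabelian_congruence}, the composite $\SL(A) \hookrightarrow \widehat{\SL(A)} \to \Out(\what{M})$ extends continuously and factors through $\SL(\Ahat) \cong \SL_2(\Zhat)$. Thus $\Gamma_\varphi$ is the preimage in $\SL_2(\ZZ)$ of an open subgroup $H \subset \SL_2(\Zhat)$; since $H$ contains the kernel of $\SL_2(\Zhat) \twoheadrightarrow \SL_2(\ZZ/n)$ for some $n$, we conclude $\Gamma_\varphi \supset \Gamma(n)$. There is no serious obstacle: all the real group-theoretic work is already done in Theorem \ref{thm_metabelian_congruence}, and the corollary is essentially bookkeeping. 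The only point to watch is the equivariance of the bijection in (i) and the finiteness argument in (ii), both of which are routine.
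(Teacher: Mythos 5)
Your proposal is correct and follows essentially the same route as the paper: identify $\Epi^\ext(F,G)$ with $\Epi^\ext(\what{M},G)$, invoke Theorem \ref{thm_metabelian_congruence} to see the $\what{\SL_2(\ZZ)}$-action factors through $\SL_2(\Zhat)$, and use finiteness of the set to conclude stabilizers are restrictions to $\SL_2(\ZZ)$ of open subgroups of $\SL_2(\Zhat)$, hence congruence. Your steps (i) and (ii) just spell out details the paper treats as immediate.
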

\begin{proof} Since the set $\Epi^\ext(F,G)$ is finite, this $\SL(\ZZ)$-action induces a (continuous) action of $\what{\SL_2(\ZZ)}$. Since $G$ is finite metabelian, we may identify $\Epi^\ext(F,G) = \Epi^\ext(\what{M},G)$, and hence this action factors through the image of $\what{\SL_2(\ZZ)}$ in $\Out(\what{M})$, which by the theorem is $\SL_2(\Zhat)$. Since $\Epi^\ext(\what{M},G)$ is finite, the stabilizers $\Gamma_\varphi$ are the restrictions to $\SL_2(\ZZ)$ of open subgroups of $\SL_2(\Zhat)$, and thus must be congruence.
\end{proof}

\begin{defn} Let $B$ be any group, then for any finite index characteristic subgroup $K\le B$, let $\Gamma[K] := \Ker(\Aut(B)\rightarrow\Aut(B/K))$. Then we say that $\Aut(B)$ has the \emph{congruence subgroup property} if every finite index subgroup of $\Aut(B)$ contains $\Gamma[K]$ for some $K$.
\end{defn}

Theorem \ref{thm_metabelian_congruence} also gives another proof of the following result, first due to Ben-Ezra \cite{BenEzra16}.

\begin{cor} $\Aut(M)$ does not have the congruence subgroup property.
\end{cor}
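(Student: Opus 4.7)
The plan is to assume $\Aut(M)$ has CSP and derive a contradiction from the classical failure of CSP for $\SL_2(\ZZ)$. Fix a noncongruence finite-index subgroup $\Lambda \le \SL_2(\ZZ) = \SL(A)$. A direct computation, mimicking the proof of Proposition \ref{prop_IA_is_inner} in the discrete setting (the determinant of any $\gamma_r \in \IAEnd(M)$ is $1 + r_1(1-a_2) + r_2(a_1-1)$ and hence has augmentation $1$, and the units of $\ZZ[A]$ of augmentation $1$ are exactly $A$), gives $\IAut(M) = \Inn(M)$, and hence $\Out(M) = \GL(A) = \GL_2(\ZZ)$. Let $\pi : \Aut(M) \twoheadrightarrow \Out(M)$ and put $\tilde\Lambda := \pi^{-1}(\Lambda)$, a finite-index subgroup of $\Aut(M)$.

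If CSP held, $\tilde\Lambda$ would contain $\Gamma[K]$ for some finite-index characteristic $K \le M$, and so $\pi(\Gamma[K]) \subseteq \Lambda$. I would identify $\pi(\Gamma[K]) = \Ker(\Out(M) \to \Out(M/K))$: the forward inclusion is immediate since $\Gamma[K]$ acts trivially on $M/K$, and for the reverse, any $[\alpha]$ in the kernel lifts to some $\alpha \in \Aut(M)$ whose restriction to $M/K$ is conjugation by some $\bar m \in M/K$; composing $\alpha$ with conjugation by $m^{-1}$ for any lift $m \in M$ of $\bar m$ produces an element of $\Gamma[K]$ having the same image in $\Out(M)$. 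Intersecting with $\SL(A)$ yields $\Ker(\SL(A) \to \Out(M/K)) \subseteq \Lambda$, so it suffices to show this kernel is a congruence subgroup of $\SL_2(\ZZ)$.

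This is where Theorem \ref{thm_metabelian_congruence} enters. The map $\SL(A) \to \Out(M/K)$ factors as $\SL(A) \to \Out(\what M) \to \Out(\what M / \what K) = \Out(M/K)$, where the second map is the continuous projection to a finite group. Extending by continuity to the profinite completion, the induced map $\widehat{\SL(A)} \to \Out(\what M)$ has image $\SL(\what A) \cong \SL_2(\Zhat)$ by the theorem, so $\SL(A) \to \Out(M/K)$ factors through a continuous map $\SL_2(\Zhat) \to \Out(M/K)$. Its kernel is open in $\SL_2(\Zhat)$, i.e.\ a congruence open subgroup, and its pullback to $\SL_2(\ZZ)$ is a congruence subgroup contained in $\Lambda$, contradicting the noncongruence of $\Lambda$. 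The main obstacle is the initial identification $\Out(M) = \GL_2(\ZZ)$; once this is in hand, the corollary reduces to a short chase through the factorization supplied by Theorem \ref{thm_metabelian_congruence}.
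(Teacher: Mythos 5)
Your overall route is the paper's: identify $\Out(M)\cong\GL_2(\ZZ)$ (this is Bachmuth's classical theorem, which the paper simply cites; your discrete rerun of Proposition \ref{prop_IA_is_inner} works because $\ZZ[A]^\times=\pm A$ and $a_1-1,a_2-1$ are coprime in the Laurent polynomial ring), reduce the statement from $\Aut(M)$ to $\Out(M)$ (you do this by hand via $\pi(\Gamma[K])=\Ker(\Out(M)\to\Out(M/K))$, where the paper cites \cite{BER11}; your verification of that identification is correct), and then derive the contradiction from the fact that these kernels meet $\SL_2(\ZZ)$ in congruence subgroups, which ultimately rests on Theorem \ref{thm_metabelian_congruence}.

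The one genuine gap is the asserted factorization $\SL(A)\to\Out(\what{M})\to\Out(\what{M}/\what{K})$. For the second map to be ``the continuous projection'' you need the closure $\ol{K}$ of $K$ to be stable under \emph{every} continuous automorphism of $\what{M}$, and this does not follow from $K$ being characteristic in $M$: automorphisms of $\what{M}$ need not preserve the dense subgroup $M$. In fact, since $\Aut(\what{M})$ acts transitively on the surjections $\what{M}\twoheadrightarrow M/K$ up to conjugacy (\cite{RZ10} Theorem 3.5.8, as invoked in \ref{cor_isomorphic_GL2_orbits}), $\ol{K}$ is characteristic in $\what{M}$ if and only if \emph{every} surjection $M\twoheadrightarrow M/K$ has kernel exactly $K$ --- a much stronger condition than $K$ being characteristic; by Theorem \ref{thm_IAEnds_are_normal} and the decomposition of $\Out(\what{M})$, what is really at stake is stability of $\ol{K}$ under braid-like automorphisms such as $(x_1,x_2)\mapsto(x_1,x_2^u)$ for $u\in\Zhat^\times$, which do not come from $\Aut(M)$. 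Fortunately you never need the map on all of $\Out(\what{M})$: the subgroup $\Ker(\SL(A)\to\Out(M/K))$ that you must show is congruence is precisely the $\SL_2(\ZZ)$-stabilizer of the class of the natural projection $\varphi_0:\what{M}\to M/K$ in $\Epi^\ext(\what{M},M/K)$, so Corollary \ref{cor_metabelian_congruence} gives the congruence property directly (this is the paper's own step). Alternatively, replace $\ol{K}$ by the intersection of its finitely many $\Aut(\what{M})$-translates, which is open and characteristic in $\what{M}$; or observe that any automorphism of $\what{M}$ whose outer class lies in $\SL(\what{A})$ is an inner automorphism times an element of the closure of the image of $\SAut(F)$, and that stabilizing $\ol{K}$ is a closed condition. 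With any of these repairs your argument is correct and coincides in substance with the paper's proof.
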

\begin{proof} Since $\what{M}$ is center-free (\ref{cor_Mhat_center_free}), by \cite{BER11} Lemma 3.1, it suffices to show the result for $\Out(M)$. It is a classical result that $\Out(M) \cong \GL_2(\ZZ)$ (c.f. \cite{Bach65} Theorem 2). Let $\Gamma\le\SL_2(\ZZ)$ be a noncongruence subgroup (of finite index), then $\Gamma$ contains no congruence subgroups, and hence by Corollary \ref{cor_metabelian_congruence}, does not contain the $\SL_2(\ZZ)$-stabilizers of any $\varphi : \what{M}\rightarrow G$, but when $G$ is a characteristic quotient $M/K$ of $M$, these stabilizers are precisely the subgroups $\Gamma[K]$, which proves the result.
\end{proof}


\subsection{Structure of $\Out(\what{M}$)}

In this section we will give an explicit semi-direct product decomposition of $\Out(\what{M})$. First we need some lemmas.

\begin{lemma}\label{lemma_dc_is_detab} Let $\epsilon : \Zhat\ps{\what{A}}\rightarrow\Zhat$ be the augmentation, and $c\in M'$ a $\ZZ[A]$-basis. Then the following diagram is commutative:
\[\begin{tikzcd}
1\arrow[r] & \IAut(\what{M})\arrow[r]\arrow[d] & \Aut(\what{M})\arrow[r,"\ab_*"]\arrow[d,"\det_c"] &\GL_2(\Zhat)\arrow[r]\arrow[d,"\det"] & 1\\
1\arrow[r] & \Zhat\ps{\what{A}}^{\times'}\arrow[r] & \Zhat\ps{\what{A}}^\times\arrow[r,"\epsilon"] & \Zhat^\times\arrow[r] & 1
\end{tikzcd}\]
where both rows are exact.
\end{lemma}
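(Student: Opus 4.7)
The plan is to establish the three ingredients separately: exactness of each row, and commutativity of each square. The bottom row is essentially formal: $\epsilon$ is split by the structure map $\Zhat\hookrightarrow\Zhat\ps{\what{A}}$, and $\Zhat\ps{\what{A}}^{\times'}$ is defined as $\epsilon^{-1}(1)\cap\Zhat\ps{\what{A}}^\times$, so the sequence is exact by Definition \ref{def_distinguished}. For the top row, the identification of $\IAut(\what{M})$ as the kernel of $\ab_*$ is the definition of $\IAut$. Surjectivity of $\ab_*$ is the first real step: given $M\in\GL_2(\Zhat)$ with $u:=\det(M)$, write $M = M_0\cdot\spmatrix{1}{0}{0}{u}$ with $M_0\in\SL_2(\Zhat)$; apply Theorem \ref{thm_metabelian_congruence} to lift $M_0$ to some $\tilde{M}_0\in\Aut(\what{M})$, then take $\tilde{M}_0\cdot\gamma_u$, whose abelianization is exactly $M$ by Lemma \ref{lemma_diagonal_automorphisms}.

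For the left square: the vertical map $\IAut(\what{M})\to\Zhat\ps{\what{A}}^{\times'}$ is just the restriction of $\det_c$, and it lands in $\Zhat\ps{\what{A}}^{\times'}$ by Proposition \ref{prop_det}. The square then commutes by inspection since both horizontal arrows on the left are inclusions.

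The main content is the commutativity of the right square: one must show $\epsilon(\det_c(\gamma))=\det(\ab_*(\gamma))$ for every $\gamma\in\Aut(\what{M})$. The approach is to observe that both composites $\Aut(\what{M})\to\Zhat^\times$ are continuous (genuine) group homomorphisms. For $\det\circ\ab_*$ this is clear. For $\epsilon\circ\det_c$: $\det_c$ is a continuous crossed homomorphism by Proposition \ref{prop_crossed_homomorphism}, and the action of $\Aut(\what{M})$ on $\Zhat\ps{\what{A}}$ factors through $\Aut(\what{A})$ and simply permutes the group elements $a\in\what{A}\subset\Zhat\ps{\what{A}}$; since each such $a$ has $\epsilon(a)=1$, the map $\epsilon$ is $\Aut(\what{M})$-invariant, and so $\epsilon\circ\det_c$ becomes a genuine homomorphism into the abelian group $\Zhat^\times$. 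Using the surjectivity decomposition above, any $\gamma\in\Aut(\what{M})$ can be written as $\gamma = \iota\cdot\tilde{M}_0\cdot\gamma_u$ with $\iota\in\IAut(\what{M})$, $\tilde{M}_0$ a lift of an element of $\SL_2(\Zhat)$ coming from $\what{\SAut(F)}$, and $\gamma_u$ diagonal, so it suffices to verify the identity on each of these three factors. For $\iota$, both sides are $1$ (by Proposition \ref{prop_det} and the definition of $\IAut$). For $\tilde{M}_0$, on generators $\gamma_E,\gamma_T$ of $\SAut(F)$ Example \ref{ex_SL2Z} gives $\det_c \in\{a_1^{\pm 1},a_2^{\pm 1}\}\subset\what{A}$, whose augmentation is $1$, matching $\det\in\SL_2=1$; by continuity this extends to $\what{\SAut(F)}$. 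For $\gamma_u$, this is exactly Lemma \ref{lemma_diagonal_automorphisms}.

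The main obstacle is ensuring the generation/decomposition argument is rigorous — in particular, that every $\gamma\in\Aut(\what{M})$ genuinely factors as $\iota\cdot\tilde{M}_0\cdot\gamma_u$. This is cleanly handled by the surjectivity argument for $\ab_*$ above: given $\gamma$, set $u=\det(\ab_*(\gamma))$ and $M_0 = \ab_*(\gamma)\cdot\spmatrix{1}{0}{0}{u^{-1}}\in\SL_2(\Zhat)$, lift $M_0$ to $\tilde{M}_0\in\Aut(\what{M})$ via Theorem \ref{thm_metabelian_congruence}, and then $\iota := \gamma\cdot\gamma_u^{-1}\cdot\tilde{M}_0^{-1}$ has trivial abelianization, hence lies in $\IAut(\what{M})$. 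This completes the verification on the three distinguished classes of automorphisms and, via the homomorphism property of both composites, yields the desired equality on all of $\Aut(\what{M})$.
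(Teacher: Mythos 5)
Your proof is correct and follows essentially the same route as the paper: both verify the right square on $\IAut(\what{M})$ via Proposition \ref{prop_det}, on (lifts of) $\SL_2$ via Example \ref{ex_SL2Z} and continuity, and on the diagonal automorphisms $\gamma_u$ via Lemma \ref{lemma_diagonal_automorphisms}, then conclude using that $\det\circ\ab_*$ and $\epsilon\circ\det_c$ are genuine homomorphisms. Your version is merely more explicit about the factorization $\gamma=\iota\cdot\tilde{M}_0\cdot\gamma_u$ and about why $\epsilon\circ\det_c$ is a homomorphism (invariance of $\epsilon$ under the $\Aut(\what{A})$-action), which the paper leaves implicit.
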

\begin{proof} The exactness is clear. By Proposition \ref{prop_det}, we know that the left square commutes. It remains to check that the right square commutes. By Proposition \ref{prop_det} and Example \ref{ex_SL2Z}, the right square commutes when restricted to $\IAut(\what{M})\cdot\SAut(F)$, and hence by continuity commutes when restricted to $\Ker(\det\circ\,\ab_*)$. Thus, since $\det\circ\,\ab_*$ and $\epsilon\circ\det$ are both homomorphisms,  it suffices to check that for every $u\in\Zhat^\times$, there exists a $\sigma\in\Aut(\what{M})$ satisfying $\det(\ab_*(\sigma)) = \epsilon(\det(\sigma)) = u$. This follows directly from Lemma \ref{lemma_diagonal_automorphisms}. 
\end{proof}

\begin{defn}\label{def_braid_like_automorphisms} Let $\Out^b(\what{M})$ denote the subset
$$\Out^b(\what{M}) := \{\gamma\in\Out(\what{M}) : \ol{\det}(\gamma) = \det(\gamma^\ab)\}$$
of \emph{braid-like outer automorphisms}. Here, the equality of determinants takes place in $\Zhat\ps{\Ahat}^\times/\Ahat$. By \ref{prop_canonical_determinant}, the generalized determinant $\ol{\det}$, and hence $\Out^b(\what{M})$, is independent of a choice of basis of $M'$.
\end{defn}

\begin{prop}\label{prop_braid_likes_give_splitting} The set $\Out^b(\what{M})$ is a subgroup of $\Out(\what{M})$ containing $\SL(\Ahat)$ (c.f. \ref{thm_metabelian_congruence}), and abelianization $\ab_* : \Out(\what{M})\rightarrow\GL(\Ahat)$ sends $\Out^b(\what{M})$ isomorphically onto $\GL(\Ahat)$.
\end{prop}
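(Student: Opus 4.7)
The plan is to verify the assertions by exploiting three ingredients: the crossed‐homomorphism property of $\ol{\det}$ (Proposition \ref{prop_crossed_homomorphism} passed to the reduced determinant \ref{prop_canonical_determinant}), the elementary observation that the action of $\Aut(\what{M})$ on $\Zhat\ps{\what{A}}$ through $\Aut(\Ahat)$ fixes the subring $\Zhat$, and the identification $\IOut(\what{M})\cong\Zhat\ps{\what{A}}^{\times'}/\what{A}$ via $\ol{\det}$ from Proposition \ref{prop_IA_is_inner}, together with the commutative diagram of Lemma \ref{lemma_dc_is_detab}.

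To show $\Out^b(\what{M})$ is closed under the group operation, note that for $\gamma,\gamma'\in\Out^b(\what{M})$, $\ol{\det}(\gamma') = \det(\gamma'^\ab)$ lies in $\Zhat^\times$, which is pointwise fixed by the $\Aut(\Ahat)$-action on $\Zhat\ps{\what{A}}$, so $\,^{\gamma}\ol{\det}(\gamma') = \ol{\det}(\gamma')$. Therefore
$$\ol{\det}(\gamma\gamma') = \ol{\det}(\gamma)\cdot\,^{\gamma}\ol{\det}(\gamma') = \det(\gamma^\ab)\det(\gamma'^\ab) = \det((\gamma\gamma')^\ab),$$
and a parallel computation handles inverses. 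For $\SL(\Ahat)\subset\Out^b(\what{M})$, recall from Example \ref{ex_SL2Z} that every element in the image of $\widehat{\SL(A)}\to\Out(\what{M})$ has $\ol{\det}=1\in\Zhat\ps{\what{A}}^\times/\Ahat$, and an $\SL$-element has determinant $1$ by definition, so both sides of the braid-like condition vanish.

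For injectivity of $\ab_*|_{\Out^b(\what{M})}$, any $\gamma\in\Out^b(\what{M})$ with $\gamma^\ab=1$ lies in $\IOut(\what{M})$ and has $\ol{\det}(\gamma) = \det(1) = 1$, so $\gamma$ is trivial by the isomorphism in Proposition \ref{prop_IA_is_inner}. For surjectivity, given $\sigma\in\GL(\Ahat)$, lift it to any $\tilde\sigma\in\Out(\what{M})$. Set $u:=\det(\sigma)\in\Zhat^\times$. By Lemma \ref{lemma_dc_is_detab} the augmentation of $\ol{\det}(\tilde\sigma)$ equals $u$, so $\ol{\det}(\tilde\sigma) = u\cdot s$ modulo $\Ahat$ for a unique $s\in\Zhat\ps{\what{A}}^{\times'}/\Ahat$. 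Using Proposition \ref{prop_IA_is_inner}, choose $\iota\in\IOut(\what{M})$ with $\ol{\det}(\iota) = s^{-1}$; then $(\iota\tilde\sigma)^\ab = \sigma$ and
$$\ol{\det}(\iota\circ\tilde\sigma) = \ol{\det}(\iota)\cdot\,^{\iota}\ol{\det}(\tilde\sigma) = s^{-1}\cdot(u\cdot s) = u = \det(\sigma),$$
since $\iota\in\IAut(\what{M})$ acts trivially on $\Zhat\ps{\what{A}}$. Hence $\iota\circ\tilde\sigma\in\Out^b(\what{M})$ is the required preimage.

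No step here is genuinely hard; the whole proposition is a bookkeeping exercise. The only subtlety worth highlighting is that the twisted cocycle formula for $\ol{\det}$ forces us to compose on the \emph{left} with $\iota$ rather than on the right, so that the action on the second factor is by an IA-automorphism and becomes trivial — composing in the opposite order would introduce the undesired $\tilde\sigma$-action on $s^{-1}$.
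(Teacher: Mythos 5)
Your proof is correct and follows essentially the same route as the paper: closure under composition via the crossed-homomorphism property of $\ol{\det}$ and the fact that scalar units are fixed by the $\GL(\Ahat)$-action, containment of $\SL(\Ahat)$ via Example \ref{ex_SL2Z}, and bijectivity onto $\GL(\Ahat)$ by correcting an arbitrary lift on the \emph{left} with the unique IA-outer element supplied by Proposition \ref{prop_IA_is_inner} and Lemma \ref{lemma_dc_is_detab} (your $\iota$ with $\ol{\det}(\iota)=s^{-1}$ is exactly the paper's $\sigma$ with $\ol{\det}(\sigma)=\det(\gamma^\ab)\ol{\det}(\gamma)^{-1}$). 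The only cosmetic difference is that you separate injectivity and surjectivity where the paper phrases it as uniqueness of the braid-like element in each fiber of $\ab_*$.
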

\begin{proof} For $\gamma,\gamma'\in\Out^b(\what{M})$, $\ol{\det}(\gamma')\in\Zhat^\times$, so we have $\la{\gamma}\ol{\det}(\gamma') = \ol{\det}(\gamma')$, but this implies
$$\ol{\det}(\gamma\circ\gamma') = \ol{\det}(\gamma)\la{\gamma}\ol{\det(\gamma')} = \det(\gamma^\ab)\cdot\la{\gamma}\det(\gamma'^\ab) = \det(\gamma^\ab)\det(\gamma'^\ab) = \det(\gamma^\ab\circ\gamma'^\ab)$$
so $\Out^b(\what{M})$ is indeed a subgroup. It contains $\SL(\Ahat)$ since $\ol{\det}(\SL(A)) = \{1\}\subset\Zhat\ps{\Ahat}^\times/\Ahat$ (c.f. \ref{ex_SL2Z}).

\sgap

For any $\gamma\in\Out(\what{M})$, by \ref{lemma_dc_is_detab} we know that $u_\gamma := \det(\gamma^\ab)\ol{\det}(\gamma)^{-1}$ is an element of $\Zhat\ps{\Ahat}^{\times'}/\what{A}$. By \ref{prop_IA_is_inner}, there is a unique $\sigma\in\IOut(\what{M})$ such that $\ol{\det}(\sigma) = u_\gamma$. Then,
$$\ol{\det}(\sigma\circ\gamma) = \ol{\det}(\sigma)\la{\sigma}\ol{\det}(\gamma) = u_\gamma\ol{\det}(\gamma) = \det(\gamma^\ab) = \det((\sigma\circ\gamma)^\ab)$$
so $\sigma\circ\gamma$ is the unique element of $\ab_*^{-1}(\gamma^\ab)$ which lies in $\Out^b(\what{M})$. This proves that $\Out^b(\what{M})$ is (the image of) a section of the map $\Out(\what{M})\stackrel{\ab_*}{\rightarrow}\GL(\Ahat)$, as desired.
\end{proof}

As a consequence of the discussion above, we obtain an explicit description of $\Out(\what{M})$.

\begin{thm}[Structure of $\Out(\what{M}$)]\label{thm_split} The map $\GL(\Ahat)\rightiso\Out^b(\what{M})\subset\Out(\what{M})$ splits the exact sequence:
$$1\longrightarrow\left(\Zhat\ps{\what{A}}^{\times'}/\what{A}\right)\stackrel{\ol{\det}^{-1}}{\longrightarrow}\Out(\what{M})\stackrel{\ab_*}{\longrightarrow}\GL(\Ahat)\rightarrow 1$$
(c.f. Proposition \ref{prop_IA_is_inner} for the definition of $\ol{\det}^{-1}$). In particular, we have
$$\Out(\what{M}) \cong \left(\Zhat\ps{\what{A}}^{\times'}/\what{A}\right)\rtimes\GL(\Ahat)$$
where the semidirect product action is induced by the natural action of $\GL(\Ahat)$ on $\what{A}$, and in this presentation, the subgroup $1\rtimes\GL(\Ahat)$ corresponds to $\Out^b(\what{M})$. 

\end{thm}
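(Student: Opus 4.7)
The plan is to obtain the splitting by assembling Propositions \ref{prop_IA_is_inner} and \ref{prop_braid_likes_give_splitting}, and then to identify the resulting semidirect-product action on the abelian kernel $\Zhat\ps{\what{A}}^{\times'}/\what{A}$ via a direct computation with the crossed homomorphism $\ol{\det}$.

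Proposition \ref{prop_IA_is_inner} already provides the short exact sequence, with the kernel identified through $\ol{\det}^{-1}$. Proposition \ref{prop_braid_likes_give_splitting} shows that $\Out^b(\what{M})$ is a subgroup of $\Out(\what{M})$ and that $\ab_*$ restricts to a bijection $\Out^b(\what{M})\rightiso\GL(\Ahat)$; inverting this restriction yields the desired section $s\colon\GL(\Ahat)\rightiso\Out^b(\what{M})\subset\Out(\what{M})$. Since the kernel is abelian, the existence of a group-theoretic section is equivalent to an internal semidirect product decomposition $\Out(\what{M})\cong(\Zhat\ps{\what{A}}^{\times'}/\what{A})\rtimes\GL(\Ahat)$, with $1\rtimes\GL(\Ahat)$ corresponding to $\Out^b(\what{M})$ by construction.

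It remains to identify the conjugation action of $\GL(\Ahat)$ on the kernel. Fix $\gamma\in\Out^b(\what{M})$ with image $g=\gamma^\ab\in\GL(\Ahat)$, and $\sigma\in\IOut(\what{M})$. Since $\ol{\det}$ is a continuous crossed homomorphism on $\Out(\what{M})$ (Proposition \ref{prop_crossed_homomorphism} together with Proposition \ref{prop_canonical_determinant}), applying the crossed identity twice, together with $\ol{\det}(\gamma^{-1})=\gamma^{-1}(\ol{\det}(\gamma)^{-1})$, yields
$$\ol{\det}(\gamma\sigma\gamma^{-1})=\ol{\det}(\gamma)\cdot\la{\gamma}\ol{\det}(\sigma)\cdot\la{\gamma\sigma}\ol{\det}(\gamma^{-1}).$$
Because $\sigma$ acts trivially on $\Ahat$ and therefore trivially on $\Zhat\ps{\what{A}}$, the last factor simplifies to $\la{\gamma}\ol{\det}(\gamma^{-1})=\ol{\det}(\gamma)^{-1}$, and commutativity of $\Zhat\ps{\what{A}}^\times/\what{A}$ collapses the remaining product to $\la{\gamma}\ol{\det}(\sigma)=g(\ol{\det}(\sigma))$. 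This is exactly the action of $g\in\GL(\Ahat)=\Aut(\Ahat)$ on $\Zhat\ps{\what{A}}^{\times'}/\what{A}$ induced by functoriality of the completed group algebra from the natural action on $\Ahat$, as the theorem claims.

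There is no real obstacle: the theorem is essentially a bookkeeping assembly of the preceding propositions, and the only piece of genuine content beyond them is the short calculation above identifying the semidirect product action with the natural functorial one.
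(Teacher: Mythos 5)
Your proposal is correct and follows essentially the same route as the paper: the exact sequence comes from Proposition \ref{prop_IA_is_inner} and the section from the inverse of $\ab_*|_{\Out^b(\what{M})}$ via Proposition \ref{prop_braid_likes_give_splitting}, which is exactly what the paper's (one-line) proof invokes. Your added crossed-homomorphism computation showing that conjugation by $\Out^b(\what{M})$ acts on the kernel as $\la{\gamma}\ol{\det}(\sigma)$, i.e.\ as the natural functorial $\GL(\Ahat)$-action on $\Zhat\ps{\what{A}}^{\times'}/\what{A}$, is a correct verification of the action statement that the paper asserts without explicit proof.
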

\begin{proof} The splitting follows from \ref{prop_braid_likes_give_splitting}. 
\end{proof}

\begin{cor}\label{cor_unique_decomposition} Every $\gamma\in\Out(\what{M})$ admits a unique decomposition as the composition of an IA-outer automorphism and a braid-like outer automorphism:
$$\gamma = \gamma_{\IA}\circ\gamma_b\qquad \gamma_{\IA}\in\IOut(\what{M}), \gamma_b\in\Out^b(\what{M})$$
If we choose a section $s$ of the determinant map $\Out^b(\what{M})\rightiso\GL(\what{A})\stackrel{\det}{\rightarrow}\Zhat^\times$, we may further decompose $\gamma$ as
$$\gamma = \gamma_{\IA}\circ\gamma_{\SL}\circ \gamma_s \qquad \gamma_s := s(\det(\gamma^\ab)), \;\gamma_{\SL}\in\SL(\Ahat)$$
Moreover, in this case we have
$$\ol{\det}(\gamma) = \ol{\det}(\gamma_{\IA})\det(\gamma^\ab)\in\Zhat\ps{\Ahat}^\times/\Ahat$$
where $\ol{\det}(\gamma_{\IA})\det(\gamma^\ab)$ is the unique decomposition of $\ol{\det}(\gamma)$ into the product of a ``special'' unit (mod $\Ahat$) and a ``scalar'' unit (c.f. \ref{def_distinguished}). Thus, by \ref{prop_IA_is_inner}, $\gamma_{\IA}$ is determined by the ``special part'' of $\ol{\det}(\gamma)$.
\end{cor}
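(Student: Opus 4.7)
The first decomposition is essentially a restatement of Theorem \ref{thm_split}. Since that theorem exhibits $\Out(\what{M})$ as a semidirect product $\IOut(\what{M}) \rtimes \Out^b(\what{M})$, with $\IOut(\what{M})$ identified with $\Zhat\ps{\what{A}}^{\times'}/\what{A}$ via $\ol{\det}^{-1}$ and $\Out^b(\what{M})$ the image of the splitting constructed in Proposition \ref{prop_braid_likes_give_splitting}, every $\gamma \in \Out(\what{M})$ can be written in exactly one way as $\gamma = \gamma_{\IA}\circ\gamma_b$ with the specified factors. Concretely, the unique $\gamma_b$ is the section applied to $\gamma^\ab$, and then $\gamma_{\IA} := \gamma\circ\gamma_b^{-1}$ lies in $\IOut(\what{M})$ since abelianization kills it. The second decomposition is then immediate: any section $s$ of $\det : \Out^b(\what{M}) \cong \GL(\Ahat) \to \Zhat^\times$ splits the exact sequence $1 \to \SL(\Ahat) \to \GL(\Ahat) \to \Zhat^\times \to 1$, and we set $\gamma_s := s(\det(\gamma^\ab))$ and $\gamma_{\SL} := \gamma_b \circ \gamma_s^{-1} \in \SL(\Ahat)$.

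For the determinant formula, I would use that $\ol{\det}$ is a continuous crossed homomorphism (Proposition \ref{prop_crossed_homomorphism}) where $\Out(\what{M})$ acts on $\Zhat\ps{\Ahat}^\times/\Ahat$ through $\ab_*$. Writing $\gamma = \gamma_{\IA}\circ\gamma_b$, we obtain
\[
\ol{\det}(\gamma) \;=\; \ol{\det}(\gamma_{\IA})\cdot \leftexp{\gamma_{\IA}}{\ol{\det}(\gamma_b)}.
\]
Since $\gamma_{\IA}$ is IA, it acts trivially on $\Ahat$ and hence on $\Zhat\ps{\Ahat}$, so the twisted factor coincides with $\ol{\det}(\gamma_b)$. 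Because $\gamma_b$ is braid-like, by definition $\ol{\det}(\gamma_b) = \det(\gamma_b^\ab) = \det(\gamma^\ab)$, giving the claimed identity $\ol{\det}(\gamma) = \ol{\det}(\gamma_{\IA})\det(\gamma^\ab)$.

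Finally, I would observe that this is precisely the canonical ``special times scalar'' decomposition of $\ol{\det}(\gamma)$. By Proposition \ref{prop_IA_is_inner}, $\ol{\det}(\gamma_{\IA})$ lies in $\Zhat\ps{\what{A}}^{\times'}/\what{A}$, i.e.\ is a special unit mod $\Ahat$, while $\det(\gamma^\ab) \in \Zhat^\times$ is a scalar unit. Uniqueness of this factorization follows from the decomposition $\Zhat\ps{\what{A}}^\times \cong \Zhat^\times \times \Zhat\ps{\what{A}}^{\times'}$ recorded in Definition \ref{def_distinguished} (noting that $\Ahat \subset \Zhat\ps{\what{A}}^{\times'}$ since every $a \in \Ahat$ has augmentation $1$), so that $\Zhat\ps{\Ahat}^\times/\Ahat \cong \Zhat^\times \times (\Zhat\ps{\what{A}}^{\times'}/\Ahat)$. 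Hence $\gamma_{\IA}$ is determined by the special part of $\ol{\det}(\gamma)$, as asserted. There is no genuine obstacle here; the whole statement is an unpacking of Theorem \ref{thm_split} combined with the crossed-homomorphism identity for $\ol{\det}$.
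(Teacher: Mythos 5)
Your proposal is correct and follows the same route the paper intends: the paper's proof is simply ``follows from the above,'' i.e.\ unpacking the splitting of Theorem \ref{thm_split} (via \ref{prop_braid_likes_give_splitting} and \ref{prop_IA_is_inner}) together with the crossed-homomorphism identity for $\ol{\det}$ from Proposition \ref{prop_crossed_homomorphism}, exactly as you do. Your explicit verification that $\Ahat\subset\Zhat\ps{\what{A}}^{\times'}$ gives the uniqueness of the special-times-scalar factorization is a welcome detail the paper leaves implicit.
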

\begin{proof} Follows from the above.
\end{proof}


\begin{cor} The reduced determinant $\ol{\det} : \Out(\what{M})\rightarrow\Zhat\ps{\what{A}}^\times/\what{A}$ is surjective with kernel precisely the image $\SL(\what{A})$ of $\what{\SL(A)}$ (c.f. \S\ref{ss_metabelian_congruence}). In particular, the orbits of $\what{\SL(A)}\cong\what{\SL_2(\ZZ)}$ acting on the domain in $\Epi^\ext(\what{M},\what{M}) = \Out(\what{M})$ are classified by the reduced determinant $\ol{\det}$, or equivalently the group $\Zhat\ps{\what{A}}^\times/\what{A}$ of conjugacy classes of commutators of generating pairs of $\what{M}$.
\end{cor}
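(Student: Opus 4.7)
The plan is to harvest this corollary from the structural results already established, principally Theorem~\ref{thm_split}, Corollary~\ref{cor_unique_decomposition}, and Corollary~\ref{cor_surjective_gdet}. Surjectivity of $\ol{\det}$ is immediate from Corollary~\ref{cor_surjective_gdet}: the generalized determinant $\det_c : \Aut(\what{M}) \rightarrow \Zhat\ps{\what{A}}^\times$ is already known to be surjective, and $\ol{\det}$ is obtained by reducing mod $\what{A}$. The inclusion $\SL(\what{A}) \subset \ker(\ol{\det})$ is equally formal: by Proposition~\ref{prop_braid_likes_give_splitting} we have $\SL(\what{A}) \subset \Out^b(\what{M})$, and for any $\gamma \in \Out^b(\what{M})$ the defining condition gives $\ol{\det}(\gamma) = \det(\gamma^\ab)$ in $\Zhat\ps{\what{A}}^\times/\what{A}$, which vanishes whenever $\gamma \in \SL(\what{A})$.

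The main step is the reverse inclusion $\ker(\ol{\det}) \subset \SL(\what{A})$. For $\gamma \in \ker(\ol{\det})$, I would write $\gamma = \gamma_\IA \circ \gamma_b$ using the unique decomposition of Corollary~\ref{cor_unique_decomposition}. Since $\gamma_\IA$ acts trivially on $\what{A}$, and hence on $\Zhat\ps{\what{A}}$, the crossed-homomorphism identity for $\ol{\det}$ collapses to
$$\ol{\det}(\gamma) = \ol{\det}(\gamma_\IA)\cdot\det(\gamma^\ab).$$
Here $\ol{\det}(\gamma_\IA)$ lies in $\Zhat\ps{\what{A}}^{\times'}/\what{A}$ by Proposition~\ref{prop_IA_is_inner}, while $\det(\gamma^\ab) \in \Zhat^\times$. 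Under the product decomposition $\Zhat\ps{\what{A}}^\times/\what{A} \cong \Zhat^\times \times (\Zhat\ps{\what{A}}^{\times'}/\what{A})$ of Definition~\ref{def_distinguished}, these two factors are independent, so $\ol{\det}(\gamma)=1$ forces both to be trivial. The vanishing of $\ol{\det}(\gamma_\IA)$ renders $\gamma_\IA$ trivial by Proposition~\ref{prop_IA_is_inner}, while $\det(\gamma^\ab)=1$ places $\gamma^\ab$ in $\SL(\what{A})$; together these show $\gamma = \gamma_b \in \Out^b(\what{M})$ with $\gamma^\ab \in \SL(\what{A})$, hence $\gamma \in \SL(\what{A})$ under the identification $\Out^b(\what{M}) \cong \GL(\what{A})$ of Proposition~\ref{prop_braid_likes_give_splitting}.

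For the orbit statement, the precomposition action of $\what{\SL(A)}$ on $\Epi^\ext(\what{M},\what{M}) = \Out(\what{M})$ factors through $\SL(\what{A}) \subset \Out(\what{M})$ by Theorem~\ref{thm_metabelian_congruence}, so the orbits are exactly the right cosets $\gamma \cdot \SL(\what{A})$. The crossed-homomorphism identity from Proposition~\ref{prop_crossed_homomorphism}, together with the kernel computation above, shows that $\ol{\det}$ is constant on each such coset (since $\ol{\det}(\sigma)=1$ for $\sigma \in \SL(\what{A})$) and separates distinct cosets (since $\ol{\det}(\gamma) = \ol{\det}(\gamma')$ forces $\ol{\det}(\gamma'^{-1}\gamma)=1$ after a brief computation with the crossed identity, hence $\gamma'^{-1}\gamma \in \SL(\what{A})$). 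The reinterpretation of the value group as conjugacy classes of commutators of generating pairs of $\what{M}$ then follows by combining Corollary~\ref{cor_surjective_gdet} with Corollary~\ref{good_cor}: every value of $\det_c$ is realized by the commutator of some generating pair, and two such commutators represent the same element of $\Zhat\ps{\what{A}}^\times/\what{A}$ precisely when they are conjugate.

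All serious content has been deposited in the earlier sections, so I expect no real obstacle; the only point that deserves care is the independence of the two factors $\ol{\det}(\gamma_\IA)$ and $\det(\gamma^\ab)$ in the decomposition of $\Zhat\ps{\what{A}}^\times/\what{A}$, which relies on the fact that $\what{A}$ lies entirely inside $\Zhat\ps{\what{A}}^{\times'}$ so that quotienting by $\what{A}$ preserves the scalar/special splitting. The corollary is then a direct harvest.
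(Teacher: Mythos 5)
Your proposal is correct and takes essentially the same route as the paper: surjectivity from Corollary \ref{cor_surjective_gdet}, the kernel identified via the splitting $\Out(\what{M})\cong(\Zhat\ps{\what{A}}^{\times'}/\what{A})\rtimes\GL(\what{A})$ together with the scalar/special factorization of $\Zhat\ps{\what{A}}^\times/\what{A}$, and the orbit statement by showing $\ol{\det}$ is constant on and separates right $\SL(\what{A})$-cosets, which is the same computation the paper packages through the unique decomposition of Corollary \ref{cor_unique_decomposition}. The only step you pass over silently is the identification $\Epi^\ext(\what{M},\what{M})=\Out(\what{M})$, which the paper justifies in one line by noting that $\what{M}$ is Hopfian, so every surjective endomorphism is an automorphism.
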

\begin{remark} Recall that the determinant on $\Out(\what{M})$ is only a crossed homomorphism, and its kernel is only a subgroup, which is not normal.
\end{remark}
\begin{proof} Surjectivity is precisely \ref{cor_surjective_gdet}. By example \ref{ex_SL2Z}, $\SL(\what{A})$ is certainly contained in the kernel. It follows from \ref{thm_split} that it is precisely the kernel.

\sgap

For the second statement, note that $\what{M}$ is a \emph{Hopfian} group (c.f. \cite{RZ10} Proposition 2.5.2), so any surjective endomorphism $\what{M}\twoheadrightarrow\what{M}$ is in fact an automorphism. It follows from the unique decompositions of \ref{cor_unique_decomposition} that right-multiplication by $\SL_2(\Zhat)$ preserves the reduced determinant of elements in $\Out(\what{M})$. Now suppose we have $\varphi,\varphi'\in\Out(\what{M})$ with $\ol{\det}(\varphi) = \ol{\det}(\varphi')$, then in particular the special parts of their determinants are the same, and hence $\varphi_{\IA} = \varphi'_{\IA}$, and also $\varphi_b = \varphi_b'$ (c.f. \ref{cor_unique_decomposition}). But then since $\det(\varphi_b^\ab) = \det(\varphi_b'^\ab)$ we must have $\varphi_b' = \varphi_b\circ\gamma$ for some $\gamma\in\SL(\Ahat)\subset\Out^b(\what{M})$.
\end{proof}

\begin{remark} If $G$ is a finite 2-generated metabelian group, the $\SL_2(\Zhat)$ orbits on $\Epi^\ext(\what{M},G)$ in general are not completely classified by the conjugacy classes of commutators of generating pairs in $G$. However, in all computed examples, they do seem to be classified by conjugacy classes of lifts of the commutator to a Schur cover of $G$.
\end{remark}

\subsection{IA-Endomorphisms of $\what{M}$ are normal}\label{ss_IAEnds_are_normal}
Let $G$ be a finite 2-generated metabelian group. 
Let $\varphi_0 : \what{M}\rightarrow G$ be a surjection, from which we get a commutative diagram of exact sequences:
\begin{equation}\label{eq_level_G}
\begin{tikzcd}
1\arrow[r] & \what{M}'\arrow[r]\arrow[d,twoheadrightarrow,"\varphi_0'"] & \what{M}\arrow[r,"\ab"]\arrow[d,twoheadrightarrow,"\varphi_0"] &\what{A}\arrow[d,twoheadrightarrow,"\varphi_0^\ab"]\arrow[r] & 1\\
1\arrow[r] & G'\arrow[r] & G\arrow[r,"\ab"] & G^\ab\arrow[r] & 1
\end{tikzcd}
\end{equation}

\begin{thm}[IA-Endomorphisms are normal]\label{thm_IAEnds_are_normal} If $\gamma\in\IAEnd(\what{M})$, then $\gamma$ stabilizes $\Ker(\varphi_0)$, and hence descends to an endomorphism of $G$.
\end{thm}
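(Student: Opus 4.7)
Parametrize $\gamma = \gamma_r$ with $r = (r_1, r_2) \in \Zhat\ps{\what{A}}^2$ as in \S\ref{ss_IAEnds}, and for $x \in \what{M}$ with $\mu(x) = (b_1(x) t_1 + b_2(x) t_2, a_x)$, set $\alpha(x) := b_1(x) r_1 + b_2(x) r_2$. Using the matrix (\ref{eq_matrix_of_gamma_r}) together with $\mu([x_1, x_2]) = ((1-a_2) t_1 + (a_1-1) t_2, 1)$, a short computation in $\what{T} \rtimes \what{A}$ yields the displacement formula
$$\gamma_r(x) = [x_1, x_2]^{\alpha(x)} \cdot x,$$
and the multiplication rule for $\mu$ shows $\alpha$ is a continuous $1$-cocycle $\alpha(xy) = \alpha(x) + a_x \alpha(y)$.

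Let $K := \Ker(\varphi_0)$, $c := \varphi_0([x_1, x_2]) \in G'$, and let $\jJ \subset \Zhat\ps{\what{A}}$ be the preimage of $\Ann_{\ZZ[G^{ab}]}(c)$ under the natural surjection $\Zhat\ps{\what{A}} \twoheadrightarrow \ZZ[G^{ab}]$ induced by $\varphi_0^{ab}$. Since $\what{M}'$ is free of rank $1$ on $[x_1, x_2]$ (Corollary \ref{hard_cor}), the kernel of $\varphi_0|_{\what{M}'}$ equals $[x_1, x_2]^\jJ$, so the displacement formula reduces the theorem to showing $\alpha(K) \subset \jJ$. The cocycle relation and the ideal structure of $\jJ$ imply that $N := K \cap \alpha^{-1}(\jJ)$ is closed under multiplication and inversion. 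For $x \in K$, one has $a_x \in L := \Ker(\varphi_0^{ab})$, hence $1 - a_x \in \Ker(\Zhat\ps{\what{A}} \to \ZZ[G^{ab}]) \subset \jJ$; the conjugation identity $\alpha(yxy^{-1}) = (1 - a_x)\alpha(y) + a_y\alpha(x)$ then shows $N$ is stable under conjugation by all of $\what{M}$. Thus $N$ is a closed normal subgroup of $\what{M}$ contained in $K$, and it suffices to verify $\alpha(s) \in \jJ$ for $s$ in a set normally generating $K$.

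Take $S = S_1 \cup S_2$, where $S_1$ topologically generates $J := K \cap \what{M}'$, and $S_2 = \{m_a x_1^{n_1} x_2^{n_2} : a = a_1^{n_1} a_2^{n_2} \text{ in a generating set of } L\}$, with each $m_a \in \what{M}'$ chosen so that $m_a x_1^{n_1} x_2^{n_2} \in K$. For $s = [x_1, x_2]^u \in S_1$ with $u \in \jJ$, immediately $\alpha(s) = u(\det\gamma_r - 1) \in \jJ$. For $s_a := m_a x_1^{n_1} x_2^{n_2} \in S_2$, write $m_a = [x_1, x_2]^{u_a}$ and use $\mu(x_1^{n_1} x_2^{n_2}) = ([n_1]_{a_1} t_1 + a_1^{n_1} [n_2]_{a_2} t_2, a_1^{n_1} a_2^{n_2})$, with $[k]_b := 1 + b + \cdots + b^{k-1}$; a direct computation gives
$$\bar\alpha(s_a) \equiv \bar{r}_1 \bigl([n_1]_{\bar{a}_1} - (1 - \bar{a}_2) p(a)\bigr) + \bar{r}_2 \bigl(\bar{a}_1^{n_1} [n_2]_{\bar{a}_2} - (\bar{a}_1 - 1) p(a)\bigr) \pmod{\bar\jJ},$$
where $p(a) \in \ZZ[G^{ab}]/\bar\jJ$ is defined by $c^{p(a)} = g_1^{n_1} g_2^{n_2}$ in $G'$. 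Both bracketed quantities vanish modulo $\bar\jJ$ by the group-theoretic identity $g_j(g_1^{n_1} g_2^{n_2})g_j^{-1} = \bar{a}_j \cdot (g_1^{n_1} g_2^{n_2})$ in $G'$ for $j = 1, 2$, after expansion via the metabelian commutations $g_2 g_1 g_2^{-1} = c^{-1} g_1$, $g_1 g_2 g_1^{-1} = c\, g_2$, and the telescoping identity $(c^{\pm 1} g_i)^n = c^{\pm [n]_{\bar{a}_i}} g_i^n$.

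The bulk of the work is this last verification: the two congruences encode precisely the compatibility between the $\ZZ[G^{ab}]$-module structure on $G'$ and the defining group-theoretic relations of $G$ inherited from $\what{M}$. Once they are established, the cocycle reduction of the middle paragraph makes the conclusion immediate.
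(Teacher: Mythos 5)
Your proof is correct, and while it rests on the same underlying mechanism as the paper's — the parametrization $\gamma_r$, the freeness of $\what{M}'$ over $\Zhat\ps{\what{A}}$ (Corollary \ref{hard_cor}), and the fact that the congruences forced by the relations of $G$ (equivalently, by normality of $\Ker(\varphi_0)$) cut out an ideal and hence survive multiplication by $r_1,r_2$ — the packaging is genuinely different. The paper first puts $\Ker(\varphi_0)$ into adapted form: it diagonalizes $\Ker(\varphi_0^{\ab})\le\what{A}$, chooses generators so that $\Ker(\varphi_0)=\langle I_{\varphi_0},[x_1,x_2]^{-s_1}x_1^{d_1},[x_1,x_2]^{-s_2}x_2^{d_2}\rangle$, and proves the sharp criterion of Proposition \ref{prop_stability} ($\gamma_r$ stabilizes $K$ iff $r_2s_1(a_1-1)$ and $r_1s_2(a_2-1)$ lie in $I$), from which the theorem follows by specializing to $r=(0,1),(-1,0)$, i.e.\ to conjugation by $x_1,x_2$. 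You avoid the elementary-divisor step entirely: the displacement cocycle $\alpha$ with $\gamma_r(x)=[x_1,x_2]^{\alpha(x)}x$, together with the observation that $K\cap\alpha^{-1}(\jJ)$ is a closed conjugation-stable subgroup, reduces the statement to a normally generating set built from $J=K\cap\what{M}'$ and arbitrary lifts of generators of $L=\Ker(\varphi_0^{\ab})$; the two bracket congruences you must then check are exactly identities of the type of Lemma \ref{lemma_deligne_trick}, which you rederive by conjugating $g_1^{n_1}g_2^{n_2}$ by $g_1,g_2$ inside $G$ — the same commutation computation, carried out downstairs in $G$ rather than upstairs in $\what{M}$. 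The paper's route buys a clean stability criterion for arbitrary ideals $I$; yours buys independence from any special choice of generators and a transparent ``check on normal generators'' scheme. Two cosmetic repairs: the surjection induced by $\varphi_0^{\ab}$ lands in $\Zhat[G^{\ab}]$ (or the finite quotient through which the action on $G'$ factors), not $\ZZ[G^{\ab}]$, so it is cleaner to define $\jJ:=\{u\in\Zhat\ps{\what{A}}:\varphi_0([x_1,x_2]^u)=1\}$; and you should either note that a topological generating set of $L$ can be chosen with nonnegative integer exponents $n_1,n_2$ (possible since $L$ is open in $\what{A}$, hence generated by $L\cap\ZZ_{\ge 0}^2$) or interpret $[n]_{a}$ for $n\in\Zhat$ by continuity, so that the telescoping identity applies as stated. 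Neither affects the correctness of the argument.
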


We will need a few preliminary observations. Our first goal is to describe $\Ker(\varphi_0)$. Recall that $\what{M}'$ is a free $\Zhat\ps{\what{A}}$-module of rank 1. The surjection $\varphi_0' : \what{M}'\rightarrow G'$ is then a surjection of cyclic $\Zhat\ps{\what{A}}$-modules. Thus, $\Ker(\varphi_0')$ is a $\Zhat\ps{\what{A}}$-submodule, and since $\what{M}'$ is cyclic, $\Ker(\varphi_0') = I_{\varphi_0}\what{M}'$ for some closed ideal $I_{\varphi_0}\subset\Zhat\ps{\what{A}}$, and $G'$ is a free $\Zhat\ps{\what{A}}/I_{\varphi_0}$-module of rank 1. We will sometimes abuse notation and identify $\Ker(\varphi_0')$ with $I_{\varphi_0}$.

\sgap

We will need the following observations:
\begin{lemma}\label{lemma_commutation_relation} Let $x,y\in\what{M}$ be any two elements. For any $s\in\Zhat\ps{\what{A}}$ and $z\in\what{M}$ with image $a\in\what{A}$, we have the commutation relation
$$z[x,y]^s = [x,y]^{as}z$$
In particular, for any $k\ge 1$, we have $([x,y]^s z)^k = [x,y]^{s(1+a + a^2 + \cdots + a^{k-1})}z^k$.
\end{lemma}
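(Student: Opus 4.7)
The plan is to prove both identities directly from the definition of the $\Zhat\ps{\what{A}}$-module structure on $\what{M}'$ given in Section \ref{ss_profinite_magnus}, where an element $a \in \what{A}$ acts on $w \in \what{M}'$ by $a \cdot w = \tilde{a} w \tilde{a}^{-1}$ for any lift $\tilde{a}$ of $a$ to $\what{M}$. There is essentially no creative content here; the lemma is a bookkeeping fact.

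For the first identity, I would first observe that since $[x,y]^s$ lives in $\what{M}'$ and $z$ projects to $a$, the conjugation $z [x,y]^s z^{-1}$ is by definition the action of $a$ on $[x,y]^s$, which equals $[x,y]^{as}$ by $\Zhat\ps{\what{A}}$-linearity of the module action. This is immediate on elements of the form $a_1 \in \what{A}$, extends by $\Zhat$-linearity to the integral group ring $\Zhat[\what{A}]$, and then by the continuity of the action (and density of $\Zhat[\what{A}]$ in $\Zhat\ps{\what{A}}$) to arbitrary $s$. Multiplying on the right by $z$ yields $z[x,y]^s = [x,y]^{as} z$.

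For the second identity, I would proceed by induction on $k$, the case $k=1$ being trivial. Assuming $([x,y]^s z)^{k-1} = [x,y]^{s(1+a+\cdots+a^{k-2})} z^{k-1}$, I would write
\[
([x,y]^s z)^k \;=\; [x,y]^s \, z \cdot [x,y]^{s(1+a+\cdots+a^{k-2})} z^{k-1},
\]
apply the first identity to move $z$ past $[x,y]^{s(1+a+\cdots+a^{k-2})}$ (picking up a factor of $a$ in the exponent), and combine the two commutator powers. The resulting exponent is $s + s(a + a^2 + \cdots + a^{k-1}) = s(1 + a + \cdots + a^{k-1})$, as desired. There is no serious obstacle; the only points deserving any care are justifying the passage from elements of $\Zhat[\what{A}]$ to $\Zhat\ps{\what{A}}$ via continuity, and keeping track of the geometric-series pattern in the inductive step.
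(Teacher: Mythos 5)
Your argument is correct, but it takes a somewhat different (and in fact slightly more economical) route than the paper. You work entirely intrinsically in $\what{M}$: conjugation by $z$ on an element of $\what{M}'$ is, by the very definition of the $\Zhat\ps{\what{A}}$-module structure, the action of its image $a\in\what{A}$, so $z[x,y]^sz^{-1} = a\cdot\bigl(s\cdot[x,y]\bigr) = (as)\cdot[x,y] = [x,y]^{as}$ by commutativity of the ring action; your extra care about passing from $\Zhat[\what{A}]$ to $\Zhat\ps{\what{A}}$ by continuity is harmless but not really needed, since the continuous $\Zhat\ps{\what{A}}$-action was already constructed in \S\ref{ss_profinite_magnus} and $a$ is a ring element commuting with $s$. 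The paper instead computes inside the Magnus embedding: it writes $[x,y]=[x_1,x_2]^r$ (using the freeness of $\what{M}'$ from Corollary \ref{hard_cor}), represents $z=(t,a)$ in $\what{T}\rtimes\what{A}$, and verifies the identity by explicit semidirect-product multiplication, with the second statement obtained by ``expanding and repeatedly using the commutation relation'' exactly as in your induction. Your version avoids both the embedding and the cyclicity of $\what{M}'$, so it is marginally more self-contained; the paper's version has the advantage of being a one-line matrix-style computation in the ambient group $\what{T}\rtimes\what{A}$, which is the setting used throughout the surrounding sections. Either proof is complete, and your inductive treatment of the geometric-series exponent is exactly what the paper leaves implicit.
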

\begin{proof} We can write $[x,y] = [x_1,x_2]^r$ for some $r\in\Zhat\ps{\what{A}}$. In $\what{T}\rtimes\what{A}$ we may write $x = (t,a)$, and $[x,y]^s = [x_1,x_2]^{rs} = rs[x_1,x_2]_{\what{T}}$. Then we have
$$z[x,y]^s = (t,a) (rs[x_1,x_2]_{\what{T}},1) = (t+ars[x_1,x_2]_{\what{T}},a) = (ars[x_1,x_2]_{\what{T}},1)(t,a) = [x_1,x_2]^{ars}z = [x,y]^{as}z$$
We obtain the second result by expanding and repeatedly using the commutation relation.
\end{proof}

\begin{lemma}\label{lemma_deligne_trick} Suppose $x,y\in\what{M}$ are generators with images $a,b\in\what{A}$, and suppose $\varphi_0(x^d) = \varphi_0([x,y]^s)$ for some $s\in\Zhat\ps{\what{A}}$ (ie $\ab(\varphi_0(x))$ has order dividing $d$). Then 
$$1 + a+a^2+\cdots+a^{d-1}\equiv s(1-b) \mod I_{\varphi_0}$$
\end{lemma}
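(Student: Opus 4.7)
The plan is to prove the congruence by first establishing the commutator identity
\[
[x^d, y] \;=\; [x, y]^{1 + a + a^2 + \cdots + a^{d-1}} \qquad \text{in } \what{M}',
\]
which is the ``$x^d$-analog'' of the identity $[x, y^n] = [x,y]^{1 + b + \cdots + b^{n-1}}$ established in the proof of Lemma~\ref{lemma_diagonal_automorphisms}. I would verify it either by induction on $d$ via the standard commutator formula $[uv, w] = [v, w]^u \cdot [u, w]$, or by a direct computation in the Magnus embedding: writing $\alpha := 1 + a + \cdots + a^{d-1}$, one has $\mu(x^d) = (\alpha t_x, a^d)$, and expanding $\mu([x^d, y]) = \mu(x^d)\mu(y)\mu(x^d)^{-1}\mu(y)^{-1}$, the Koszul-type identity $\alpha(a-1) = a^d - 1$ ensures the $t_y$-coefficients match so that $\mu([x^d, y]) = \alpha \cdot \mu([x, y])$.

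With this identity in hand, the remainder is a short manipulation. Set $c := \varphi_0([x, y])$, which generates $G'$ as a cyclic $\Zhat\ps{\what{A}}$-module isomorphic to $\Zhat\ps{\what{A}}/I_{\varphi_0}$ via $c \leftrightarrow 1$. Applying $\varphi_0$ to the commutator identity and using the hypothesis $\varphi_0(x^d) = \varphi_0([x,y]^s) = c^s$, I obtain
\[
c^\alpha \;=\; \varphi_0([x^d, y]) \;=\; [\varphi_0(x^d), \varphi_0(y)] \;=\; [c^s, \varphi_0(y)]
\]
in $G'$. Since $G'$ is abelian and conjugation by $\varphi_0(y)$ on $G'$ is by definition multiplication by $b$ in the $\Zhat\ps{\what{A}}$-module structure, the right-hand side simplifies:
\[
[c^s, \varphi_0(y)] \;=\; c^s \cdot \bigl(\varphi_0(y)\, c^{-s}\, \varphi_0(y)^{-1}\bigr) \;=\; c^s \cdot c^{-sb} \;=\; c^{s(1-b)}.
\]
Thus $c^\alpha = c^{s(1-b)}$, and the identification $G' \cong \Zhat\ps{\what{A}}/I_{\varphi_0}$ yields the claimed congruence $1 + a + \cdots + a^{d-1} \equiv s(1-b) \pmod{I_{\varphi_0}}$.

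The main obstacle is spotting the bridge identity $[x^d, y] = [x, y]^\alpha$. The hypothesis only constrains $\varphi_0(x^d)$, while the conclusion features the asymmetric factor $(1-b)$; this identity is precisely what converts a statement about the $d$-th power $x^d$ into a commutator statement whose commutator with $\varphi_0(y)$ produces the $(1-b)$ for free from the abelian structure of $G'$. Once this is noticed, everything else is essentially forced.
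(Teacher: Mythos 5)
Your proof is correct and is essentially the paper's own argument: the paper obtains the same bridge identity $x^dyx^{-d}y^{-1}=[x,y]^{1+a+\cdots+a^{d-1}}$ by iterating the commutation relation of Lemma \ref{lemma_commutation_relation}, then computes $[x,y]^sy[x,y]^{-s}y^{-1}=[x,y]^{s(1-b)}$ and concludes, exactly as you do, from the freeness of $G'$ as a $\Zhat\ps{\what{A}}/I_{\varphi_0}$-module on $\varphi_0([x,y])$. Your two suggested verifications of the bridge identity (induction via $[uv,w]=u[v,w]u^{-1}[u,w]$, or the Magnus-embedding computation) are both valid and amount to the same iteration the paper performs.
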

\begin{remark} Note that the statement of the lemma is \emph{not} symmetric in $x$ and $y$.
\end{remark}
\begin{proof} 
Note that $xyx^{-1} = [x,y]y$, so applying the commutation relation \ref{lemma_commutation_relation}, we have
$$x^iyx^{-i} = x^{i-1}[x,y]yx^{-(i-1)} = [x,y]^{a^{i-1}}x^{i-1}yx^{-(i-1)}$$
Iterating, we have
$$x^{d}yx^{-d} = [x,y]^{1 + a+a^2+\cdots+a^{d-1}}y$$
Right-multiplying both sides by $y^{-1}$ and again applying the commutation relation, we have, ``modulo $I_{\varphi_0}$'',
$$[x,y]^{s(1-b)} = [x,y]^{s}y[x,y]^{-s}y^{-1} \equiv x^dyx^{-d}y^{-1} = [x,y]^{1+a+\cdots+a^{d-1}}\qquad\text{in $G'$}$$
Since $x,y$ generate $\what{M}$, $\what{M}'$ a free $\Zhat\ps{\what{A}}$-module generated by $[x,y]$, so $G'$ is a free $\Zhat\ps{\what{A}}/I_{\varphi_0}$-module generated by $\varphi_0([x,y])$, so we must have
$$1+a+a^2+\cdots+a^{d-1}\equiv s(1-b)\mod I_{\varphi_0}$$
as desired. 
\end{proof}

Since $I_{\varphi_0} = \Ker(\varphi_0') = \Ker(\varphi_0)\cap\what{M}'$, $I_{\varphi_0}$ is a (closed) normal subgroup of $\what{M}$. Since IA-endomorphisms act on $\what{M}'$ by multiplication by their (Bachmuth) determinants, and $\Ker(\varphi_0')$ is a $\Zhat\ps{\what{A}}$-submodule, every IA-endomorphism of $\what{M}$ stabilizes $\Ker(\varphi_0')$. In particular, we have

\begin{lemma} Every IA-endomorphism of $\what{M}$ descends to an IA-endomorphism of $\what{M}/I_{\varphi_0}$.
\end{lemma}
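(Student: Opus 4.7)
The plan is to verify two things: (i) that $\gamma$ stabilizes the normal subgroup $\Ker(\varphi_0') = I_{\varphi_0}\what{M}' \trianglelefteq \what{M}$, so that it descends to an endomorphism $\bar\gamma$ of $\what{M}/I_{\varphi_0}\what{M}'$; and (ii) that the induced $\bar\gamma$ still acts as the identity on the abelianization of the quotient.

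For (i), I would invoke Proposition \ref{prop_alternative_def_of_det}, which states that $\gamma$ acts on $\what{M}'$, viewed as a free $\Zhat\ps{\what{A}}$-module of rank $1$, by multiplication by $\det(\gamma) \in \Zhat\ps{\what{A}}$. This action is $\Zhat\ps{\what{A}}$-linear, so it preserves every $\Zhat\ps{\what{A}}$-submodule of $\what{M}'$. Since $\Ker(\varphi_0') = I_{\varphi_0}\what{M}'$ is by construction such a submodule, we immediately get $\gamma(\Ker(\varphi_0')) \subseteq \Ker(\varphi_0')$. For (ii), observe that $I_{\varphi_0}\what{M}' \subseteq \what{M}'$, so the projection $\what{M} \twoheadrightarrow \what{M}/I_{\varphi_0}\what{M}'$ identifies the abelianization of the quotient canonically with $\what{A}$. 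The map $\bar\gamma$ induced on this abelianization agrees with the one induced by $\gamma$, namely $\id_{\what{A}}$, so $\bar\gamma \in \IAEnd(\what{M}/I_{\varphi_0}\what{M}')$.

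There is no genuine obstacle here; the statement is essentially forced by the $\Zhat\ps{\what{A}}$-module description of the action of $\IAEnd(\what{M})$ on $\what{M}'$ together with the fact that $I_{\varphi_0}$ was defined precisely as the annihilator-type ideal making $\Ker(\varphi_0')$ into a $\Zhat\ps{\what{A}}$-submodule. The only point worth flagging is the notational identification between the ideal $I_{\varphi_0} \subseteq \Zhat\ps{\what{A}}$ and the normal subgroup $I_{\varphi_0}\what{M}' \subseteq \what{M}$, which is already explicitly made in the paragraph preceding the lemma.
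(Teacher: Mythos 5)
Your proposal is correct and follows the same route as the paper: the paper justifies the lemma in the sentence immediately preceding it, by noting that IA-endomorphisms act on $\what{M}'$ by multiplication by their Bachmuth determinants (Proposition \ref{prop_alternative_def_of_det}) and that $\Ker(\varphi_0') = I_{\varphi_0}\what{M}'$ is a $\Zhat\ps{\what{A}}$-submodule, hence stable. Your additional check (ii), that the induced endomorphism is still IA because the quotient is by a subgroup of $\what{M}'$ so the abelianization remains $\what{A}$, is a small point the paper leaves implicit but is correctly handled.
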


\begin{remark} In general, for a closed ideal $I\subset\Zhat\ps{\what{A}}$, the subgroup $I\what{M}'\subset \what{M}'$ is stabilized by an $\gamma\in\Aut(\what{M})$ if and only if $\gamma(I) \subset I$, where here $\gamma$ acts on $\Zhat\ps{\what{A}}$ via functoriality, through its action on $\what{A}$ (c.f. Proposition \ref{prop_crossed_homomorphism}). From this, the lemma is clear, since IA-automorphisms by definition act trivially on $\what{A}$.
\end{remark}

\sgap

Reducing the top row of diagram (\ref{eq_level_G}) modulo $I_{\varphi_0}$, we get

\begin{equation*}
\begin{tikzcd}
1\arrow[r] & \what{M}'/I_{\varphi_0}\arrow[r]\arrow[d,"\cong"] & \what{M}/I_{\varphi_0}\arrow[r,"\ab"]\arrow[d,twoheadrightarrow,"\ol{\varphi_0}"] &\what{A}\arrow[d,twoheadrightarrow,"\varphi_0^\ab"]\arrow[r] & 1\\
1\arrow[r] & G'\arrow[r] & G\arrow[r,"\ab"] & G^\ab\arrow[r] & 1
\end{tikzcd}
\end{equation*}
and hence we have an isomorphism $\Ker(\ol{\varphi_0})\cong\Ker(\varphi_0^\ab)$. Let us choose a pair of generators $x_1,x_2$ of $\what{M}$ such that for suitable choices $s_1,s_2\in\Zhat\ps{\what{A}}/I_{\varphi_0}$, we have
$$\Ker(\ol{\varphi_0}) = \langle [x_1,x_2]^{-s_1}x_1^{d_1}\mod I_{\varphi_0}, \quad[x_1,x_2]^{-s_2}x_2^{d_2}\mod I_{\varphi_0}\rangle,\qquad\text{and}$$
$$\Ker(\varphi_0) = \langle I_{\varphi_0} ,[x_1,x_2]^{-s_1}x_1^{d_1},[x_1,x_2]^{-s_2}x_2^{d_2}\rangle$$
In particular, in $G$, we have $x_i^{d_i} = [x_1,x_2]^{s_i}$. Theorem \ref{thm_IAEnds_are_normal} will follow from the following proposition:
\begin{prop}\label{prop_stability} Let $I\subset\Zhat\ps{\what{A}}$ be a closed ideal, viewed as the normal subgroup $I\what{M}'\lhd \what{M}$ contained in $\what{M}'$. Let $x_1,x_2$ generate $\what{M}$. Let $s_1,s_2\in\Zhat\ps{\what{A}}$, and $d_1,d_2\ge 1$ be such that the subgroup
$$K := \langle I,[x_1,x_2]^{-s_1}x_1^{d_1},[x_1,x_2]^{-s_2}x_2^{d_2}\rangle \le\what{M}$$
has the property that $\ab : \what{M}/I\rightarrow \what{A}$ restricts to an injection on $K/I$. Then given $r = (r_1,r_2)\in\Zhat\ps{\what{A}}^2$, $K$ is stabilized by the IA-endomorphism $\gamma_r$ (c.f. \S\ref{ss_IAEnds}) if and only if 
$$r_2s_1(a_1-1)\in I\qquad\text{and}\qquad r_1s_2(a_2-1)\in I$$
Setting $\gamma_r = \gamma_{(0,1)}$ and $\gamma_{(-1,0)}$, we see that $K$ is normal if and only if $s_1(a_1-1)\in I$ and $s_2(a_2-1)\in I$. In particular, if $K$ is normal, then it is stabilized by all IA-endomorphisms.
\end{prop}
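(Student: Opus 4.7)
The approach is a direct computation via the Magnus embedding $\mu:\what{M}\hookrightarrow\what{T}\rtimes\what{A}$. Write $k_i := [x_1,x_2]^{-s_i}x_i^{d_i}$ and $\pi_i := 1 + a_i + \cdots + a_i^{d_i-1}$. First I would reduce $\gamma_r(K)\subseteq K$ to the two scalar conditions $\gamma_r(k_i)k_i^{-1}\in I\what{M}'$. Indeed, Proposition \ref{prop_alternative_def_of_det} shows that $\gamma_r$ acts on $\what{M}'$ by multiplication by $\det(\gamma_r)\in\Zhat\ps{\what{A}}$, and since $I\what{M}'$ is a $\Zhat\ps{\what{A}}$-submodule, it is automatically $\gamma_r$-stable, so $\gamma_r(K)\subseteq K$ iff $\gamma_r(k_1),\gamma_r(k_2)\in K$. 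The injectivity hypothesis identifies $K/I\what{M}'$ with $\langle a_1^{d_1},a_2^{d_2}\rangle\subseteq\what{A}$, which is freely generated over $\Zhat$ by independence of $a_1,a_2$; so every element of $K$ uniquely decomposes as $\iota k_1^{m_1}k_2^{m_2}$ with $\iota\in I\what{M}'$ and $m_i\in\Zhat$. Since $\gamma_r$ is IA, $\gamma_r(k_i)$ has image $a_i^{d_i}$ in $\what{A}$ and $d_i\in\NN$ is a non-zero-divisor in $\Zhat$, so necessarily $\gamma_r(k_i)=\iota_i k_i$ for some $\iota_i\in I\what{M}'$, as desired.

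Next comes the explicit computation. Applying Lemma \ref{lemma_commutation_relation} with $z=x_i$ gives $\gamma_r(x_i)^{d_i}=([x_1,x_2]^{r_i}x_i)^{d_i}=[x_1,x_2]^{r_i\pi_i}x_i^{d_i}$, and combining with $\gamma_r([x_1,x_2]^{-s_i})=[x_1,x_2]^{-s_i\det(\gamma_r)}$ (again Prop.~\ref{prop_alternative_def_of_det}), a short manipulation in $\what{T}\rtimes\what{A}$ yields
\begin{equation*}
\gamma_r(k_i)k_i^{-1}=[x_1,x_2]^{r_i\pi_i+s_i(1-\det(\gamma_r))}.
\end{equation*}
Substituting $1-\det(\gamma_r)=-r_1(1-a_2)-r_2(a_1-1)$ and collecting terms,
\begin{equation*}
\gamma_r(k_1)k_1^{-1}=[x_1,x_2]^{r_1\alpha_1-r_2s_1(a_1-1)},\qquad \gamma_r(k_2)k_2^{-1}=[x_1,x_2]^{r_2\alpha_2+r_1s_2(a_2-1)},
\end{equation*}
where $\alpha_1:=\pi_1-s_1(1-a_2)$ and $\alpha_2:=\pi_2-s_2(a_1-1)$. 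By the freeness of $\what{M}'$ as a rank-one $\Zhat\ps{\what{A}}$-module (Corollary \ref{hard_cor}), each of these elements lies in $I\what{M}'$ iff the corresponding exponent lies in $I$.

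The final ingredient is the pair of Deligne-type identities $\alpha_1,\alpha_2\in I$, which make the two conditions collapse exactly to the stated $r_2s_1(a_1-1)\in I$ and $r_1s_2(a_2-1)\in I$. These identities are precisely the conclusion of Lemma \ref{lemma_deligne_trick} applied to the relations $x_i^{d_i}\equiv[x_1,x_2]^{s_i}\pmod K$ (for $i=1,2$), and they hold in particular when $K$ is normal, as in the application to Theorem \ref{thm_IAEnds_are_normal} where $K=\ker\varphi_0$. The injectivity hypothesis itself already forces $[k_1,k_2]\in K\cap\what{M}'=I\what{M}'$, which unpacks to the single constraint $\pi_2\alpha_1-s_2(a_1^{d_1}-1)\in I$; extracting the individual identities $\alpha_i\in I$ from this is the main obstacle, and requires adapting the commutator argument of Lemma \ref{lemma_deligne_trick} to the (possibly non-normal) subgroup $K$. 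Once $\alpha_1,\alpha_2\in I$ is in hand the iff statement is immediate. For the last assertion, specializing $r=(0,1)$ and $r=(-1,0)$ --- which by Example \ref{ex_conjugation} are conjugation by $x_1$ and $x_2$ respectively --- yields the conditions $s_1(a_1-1)\in I$ and $s_2(a_2-1)\in I$; since conjugation by the generators $x_1,x_2$ controls the whole inner action, these characterize normality of $K$, and the ``in particular'' statement follows because once $s_1(a_1-1),s_2(a_2-1)\in I$ the conditions $r_2s_1(a_1-1),r_1s_2(a_2-1)\in I$ hold for every $r$.
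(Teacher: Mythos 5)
Your reduction and your computations are correct and coincide with the paper's own proof: the paper computes exactly the same exponents, uses the injectivity hypothesis in the same way to reduce stability of $K$ to congruence of the exponents of $[x_1,x_2]$ modulo $I$, and then collapses $r_1\alpha_1-r_2s_1(a_1-1)$ and $r_2\alpha_2+r_1s_2(a_2-1)$ to the stated conditions by substituting the congruences $\alpha_1=\pi_1-s_1(1-a_2)\in I$ and $\alpha_2=\pi_2-s_2(a_1-1)\in I$. The one difference is decisive, however: the paper closes that step by invoking Lemma \ref{lemma_deligne_trick} (applied to $x_1^{d_1}\equiv[x_1,x_2]^{s_1}$ and $x_2^{d_2}\equiv[x_2,x_1]^{-s_2}$), whereas you stop and declare it ``the main obstacle''. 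So as written your proposal is the paper's argument with its single nontrivial input left unproven; everything after the phrase ``once $\alpha_1,\alpha_2\in I$ is in hand'' is conditional on a statement you have not established. That is a genuine gap in the write-up.

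Your hesitation is nevertheless substantive, and you have correctly located where the subtlety lives. Lemma \ref{lemma_deligne_trick} is proved by pushing the identity $x^dyx^{-d}y^{-1}=[x,y]^{1+a+\cdots+a^{d-1}}$ through the homomorphism $\varphi_0$, so it really uses that the subgroup in question is a normal kernel; and, as you observe, the injectivity hypothesis by itself only yields the single relation coming from $[k_1,k_2]\in I\what{M}'$, namely $\pi_2\alpha_1+s_2\pi_1(1-a_1)\in I$, which does not separate into $\alpha_1,\alpha_2\in I$. In fact the ``if'' direction genuinely needs the extra congruences: taking $d_1=d_2=1$, $s_2=0$, and $s_1$ supported in a single coordinate $\ZZ_q\ps{s_1,s_2}$ of $\Zhat\ps{\what{A}}$ in which $1-a_2$ is a unit and $a_1-1$ is prime, chosen so that $e:=1-s_1(1-a_2)$ equals that prime, and setting $I=(e)$, one checks that the hypothesis of the proposition and the displayed conditions for $r=(0,1)$ hold, yet conjugation by $x_1$ does not stabilize $K$, since $x_2\in K$ and stability would force $[x_1,x_2]\in K\cap\what{M}'=I\what{M}'$, i.e.\ $1\in I$. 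So the congruences $\alpha_1,\alpha_2\in I$ are an extra input, valid in the situation the proposition is actually used for (the normal subgroups $K=\Ker\varphi_0$ of Theorem \ref{thm_IAEnds_are_normal}, where Lemma \ref{lemma_deligne_trick} applies verbatim and your argument then becomes complete), but not a consequence of the stated hypotheses alone. To finish, either restrict to that situation (or add $\alpha_1,\alpha_2\in I$, equivalently normality, to the hypotheses) and cite Lemma \ref{lemma_deligne_trick} as the paper does, or supply the adaptation of the commutator argument you allude to; as it stands that step is missing.
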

\begin{proof} By definition, we have
$$\gamma_r([x_1,x_2]^{-s_i}x_i^{d_i}) = [x_1,x_2]^{-s_i\det(\gamma_r)}([x_1,x_2]^{r_i}x_i)^{d_i} = [x_1,x_2]^{-s_i\det(\gamma_r) + r_i(1+a_i+\cdots + a_i^{d_i-1})}x_i^{d_i}\qquad \text{for }i = 1,2$$
Since $\gamma_r$ is IA and $K\subset\what{M}$ is sent isomorphically onto its image in $\what{A}$, $K$ is stabilized by $\gamma_r$ if and only if
$$[x_1,x_2]^{-s_i\det(\gamma_r) + r_i(1+a_i+\cdots+a_i^{d_i-1})}x_i^{d_i} = [x_1,x_2]^{-s_i}x_i^{d_i}\quad\text{in $\what{M}/I$ for $i = 1,2$}$$
or equivalently, using \ref{lemma_deligne_trick} and the formula $\det(\gamma_r) = 1+r_1(1-a_2) + r_2(a_1-1)$ (c.f. \ref{prop_det}), if and only if the exponents of $[x_1,x_2]$ agree modulo $I$. That is to say, if and only if
\begin{eqnarray}
-s_1(1 + r_1(1-a_2) + r_2(a_1-1)) + r_1s_1(1-a_2) - (-s_1) = -s_1r_2(a_1-1) & \text{lies in} & I\qquad\text{and} \label{cond1}\\
-s_2(1 + r_1(1-a_2) + r_2(a_1-1)) - r_2s_2(1-a_1) - (-s_2)= -s_2r_1(1-a_2) & \text{lies in} & I \label{cond2}
\end{eqnarray}
Then, $K$ is normal if and only if it is stabilized by $\gamma_{(0,1)}$ (conjugation by $x_1$) and $\gamma_{(-1,0)}$ (conjugation by $x_2$). This translates into the conditions
$$-s_1(a_1-1)\in I\qquad\text{and}\qquad s_2(1-a_2)\in I$$
which implies the two conditions (\ref{cond1}),(\ref{cond2}) since $I$ is an ideal.
\end{proof}

\begin{cor}\label{cor_isomorphic_GL2_orbits} The action of $\Out(G)$ on $\Epi^\ext(\what{M},G)$ permutes transitively the $\Out^b(\what{M})$ $(\cong\GL_2(\Zhat))$-orbits, and hence they are all isomorphic as sets with $\Out^b(\what{M})$-action.
\end{cor}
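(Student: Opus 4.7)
The plan is to combine three ingredients that are already in hand: Gaschütz-style transitivity of $\Aut(\what{M})$ on $\Epi^\ext(\what{M},G)$; the unique decomposition $\gamma = \gamma_{\IA}\circ\gamma_b$ of Corollary \ref{cor_unique_decomposition}; and the descent of IA-automorphisms from Theorem \ref{thm_IAEnds_are_normal}. Throughout, $\Out^b(\what{M})$ acts on $\Epi^\ext(\what{M},G)$ by pre-composition, and $\Out(G)$ acts by post-composition; these two actions manifestly commute, so the set of $\Out^b(\what{M})$-orbits carries a natural $\Out(G)$-action, and it suffices to show this action is transitive.

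First I would verify that $\Aut(\what{M})$ (and hence $\Out(\what{M})$) acts transitively on $\Epi^\ext(\what{M},G)$. Since $G$ is a finite $2$-generated group and $\what{F}$ is the free profinite group of rank $2\ge d(G)$, Gaschütz's lemma says that $\Aut(\what{F})$ acts transitively on $\Epi(\what{F},G)$. Because $G$ is metabelian, every such surjection annihilates the characteristic subgroup $\overline{F''}\le\what{F}$ and therefore factors uniquely through $\what{M}$; the transitive action descends along $\Aut(\what{F})\to\Aut(\what{M})$.

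Second I would unpack Theorem \ref{thm_IAEnds_are_normal}. For any $\gamma\in\IAut(\what{M})$ and any surjection $\varphi:\what{M}\twoheadrightarrow G$, the theorem gives $\gamma(\Ker\varphi)\subseteq\Ker\varphi$, so $\varphi\circ\gamma$ descends along $\varphi$ to an endomorphism of $G$, which is an automorphism $\bar\gamma_{\varphi}\in\Aut(G)$ since $G$ is finite and hence Hopfian. By construction $\varphi\circ\gamma = \bar\gamma_{\varphi}\circ\varphi$, so pre-composition by $\gamma$ and post-composition by $\bar\gamma_{\varphi}$ yield the same element of $\Epi^\ext(\what{M},G)$. (The dependence of $\bar\gamma_{\varphi}$ on $\varphi$ is irrelevant — only the existence of such a $\bar\gamma_\varphi$ matters.)

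Finally I would combine these. Given $\varphi_0,\varphi_1\in\Epi^\ext(\what{M},G)$, transitivity yields $\gamma\in\Out(\what{M})$ with $\varphi_1 = \varphi_0\circ\gamma$; by \ref{cor_unique_decomposition} write $\gamma = \gamma_{\IA}\circ\gamma_b$ with $\gamma_{\IA}\in\IOut(\what{M})$ and $\gamma_b\in\Out^b(\what{M})$. Then $\varphi_1 = (\varphi_0\circ\gamma_{\IA})\circ\gamma_b = (\bar\gamma_{\IA}\circ\varphi_0)\circ\gamma_b$, so the $\Out^b(\what{M})$-orbit of $\varphi_1$ is the image of the $\Out^b(\what{M})$-orbit of $\varphi_0$ under the $\Out(G)$-translation by the class of $\bar\gamma_{\IA}$. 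Hence $\Out(G)$ permutes the $\Out^b(\what{M})$-orbits transitively, and because the two actions commute, the bijections between them are $\Out^b(\what{M})$-equivariant, proving that all orbits are isomorphic as $\Out^b(\what{M})$-sets. There is no real obstacle: all the substance is already in Theorem \ref{thm_IAEnds_are_normal} and Corollary \ref{cor_unique_decomposition}; the only step requiring a moment's care is the Gaschütz descent to $\what{M}$.
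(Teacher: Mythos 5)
Your proof is correct and follows essentially the same route as the paper: transitivity of $\Out(\what{M})$ on $\Epi^\ext(\what{M},G)$, the decomposition of Corollary \ref{cor_unique_decomposition}, and Theorem \ref{thm_IAEnds_are_normal} to convert pre-composition by IA-automorphisms into post-composition by elements of $\Out(G)$. The only cosmetic difference is that you derive the transitivity via Gasch\"utz's lemma for $\what{F}$ and descend to $\what{M}$, where the paper cites the profinite Gasch\"utz-type result (\cite{RZ10} Theorem 3.5.8) directly.
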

\begin{proof} Since $\Out(\what{M})$ acts transitively on $\Epi^\ext(\what{M},G)$ (c.f. \cite{RZ10} Theorem 3.5.8), using \ref{cor_unique_decomposition}, any $\varphi\in\Epi^\ext(\what{M},G)$ can be written as $\varphi_0\circ u\circ\gamma$ for some $u\in\IOut(\what{M}),\gamma\in\Out^b(\what{M})$. By theorem \ref{thm_IAEnds_are_normal}, we have a homomorphism:
$$(\varphi_0)_* : \IOut(\what{M})\rightarrow\Out(G) : u\mapsto u_{\varphi_0}\qquad\text{satisfying}\quad \varphi_0\circ u = u_{\varphi_0}\circ\varphi_0$$
so every element of $\Epi^\ext(\what{M},G)$ can be written as
$$u_{\varphi_0}\circ\varphi_0\circ\gamma\qquad\text{for some $u\in\IOut(\what{M}),\gamma\in\Out^b(\what{M})$}$$
Now $\Epi^\ext(\what{M},G)$ has a natural left action by $\IOut(\what{M})$ via $(\varphi_0)_*$, and a right action by $\Out^b(\what{M})$, which commute with each other. On the other hand, the above proves that together they act transitively, which proves the result.
\end{proof}



\subsection{Geometric setup}\label{ss_geometric_setup}
Here we recall the geometric picture (c.f. \S\ref{section_introduction}) which we will use to relate our above group theoretic results to Galois actions on pro-metabelian fundamental groups. Let $K$ be a subfield of $\Qbar$ with absolute Galois group $\Gamma_K := \Gal(\Qbar/K)$. Let $E$ be an elliptic curve over $K$ with origin $O$, and let $\ol{x} : \Spec\ol{K}\rightarrow\mM(1)_K$ be the corresponding geometric point. Let $E^\circ := E - O$ and let $H_1E := H_1(E(\CC),\ZZ)$, then there is a canonical isomorphism $\pi_1(\mM(1)_{\ol{K}},\ol{x})\cong\what{\SL(H_1E)}$ which we will use to identify the two groups. A choice of such an $E$ induces a semi-direct product decomposition
$$\pi_1(\mM(1)_K,\ol{x}) = \pi_1(\mM(1)_{\ol{K}},\ol{x})\rtimes\Gamma_K = \what{\SL(H_1E)}\rtimes\Gamma_K $$
Passing to homology and composing with $H_1(E^\circ(\CC)\hookrightarrow E(\CC))$ yields the abelianization map
$$\underbrace{\pi_1^\tp(E^\circ(\CC))}_{\cong F}\stackrel{}{\rightarrow} H_1(E^\circ(\CC),\ZZ)\rightiso \underbrace{H_1(E(\CC),\ZZ)}_{\cong\ZZ^2}$$
Relative to this map, by Nielsen's theorem (c.f. \S\ref{ss_problem}), any $\gamma\in\pi_1^\tp(\mM(1)(\CC)) = \SL(H_1E)$ lifts uniquely to an element $\tilde{\gamma}\in\Out(\pi_1^\tp(E^\circ(\CC)))$, from which we obtain a homomorphism $\SL(H_1E)\rightarrow\Out(\pi_1^\tp(E^\circ(\CC)))$. Taking profinite completions, we get a homomorphism
$$\rho_{E^\circ/\ol{K}} : \what{\SL(H_1E)}\rightarrow \Out(\pi_1(E^\circ_{\ol{K}}))$$
which is just the restriction of the representation $\rho_{E^\circ/K}$ (c.f. \S\ref{ss_overview}) to the subgroup $\what{\SL(H_1E)}$. The situation is summarized in the following diagram.
\[\begin{tikzcd}
& & & \Out(\pi_1(E^\circ_{\ol{K}}))\arrow[d] \\
\what{\SL_2(\ZZ)}\rtimes\Gamma_K\cong \what{\SL(H_1E)}\rtimes\Gamma_K\arrow[r,equals] & \pi_1(\mM(1)_K,\ol{x})\arrow[rru,bend left = 25,"\rho_{E^\circ/K}"]\arrow[rr,"\rho^\meta_{E^\circ/K}"]\arrow[rrd,bend right=25,"\rho_{E/K}"'] & & \Out(\pi_1(E^\circ_{\ol{K}})^\meta)\arrow[d,"\ab_*"] \\
& & & \Out(\pi_1(E_{\ol{K}}))\arrow[r,equals] & \GL(\what{H_1E})\cong\GL_2(\Zhat)
\end{tikzcd}\]
By choosing a tangential base point, one can always lift the outer action of $\Gamma_K$ on $\pi_1(E^\circ_{\ol{K}})$ to a true action. In general this action is mysterious, but one thing we know is that if $c\in\pi_1(E^\circ_{\ol{K}})$ generates an inertia subgroup, then for any lifting of the outer action and any $\sigma\in\Gamma_K$ we must have $\sigma(c) = a_\sigma c^{\chi(\sigma)} a_\sigma^{-1}$ for some $a_\sigma\in\pi_1(E^\circ_{\ol{K}})$ where $\chi : \Gamma_K\rightarrow\Zhat^\times$ the cyclotomic character. It follows from the topological picture that for any generating pair $x_1,x_2$ of $\pi_1^\tp(E^\circ(\CC))$, $[x_1,x_2]$ generates an inertia subgroup of $\pi_1(E^\circ_{\ol{K}})$, and hence we have
$$\sigma([x_1,x_2]) = a_\sigma [x_1,x_2]^{\chi(\sigma)}a_\sigma^{-1} \qquad\text{for all $\sigma\in\Gamma_K$}$$
On the other hand, by the Weil pairing, if $\sigma^\ab$ denotes the induced action of $\sigma$ on $\pi_1(E^\circ_{\ol{K}})^\ab \cong \pi_1(E_{\ol{K}})$, then viewing $\sigma^\ab\in\GL(\what{H_1E})\cong\GL_2(\Zhat)$, we must have $\det(\sigma^\ab) = \chi(\sigma)$.

\sgap

To apply our group-theoretic results in this geometric setting, we make the following compatible identifications/definitions:
$$F := \pi_1^\tp(E^\circ(\CC)),\qquad M := F/F'', \qquad A := H_1E = H_1(E(\CC),\ZZ) = F^\ab$$
$$\what{F} := \pi_1(E^\circ_{\ol{K}}),\qquad\what{M} := \pi_1(E^\circ_{\ol{K}})^\meta,\qquad \Ahat := \pi_1(E^\circ_{\ol{K}})^\ab = \pi_1(E_{\ol{K}})$$
Thus, we have the notion of the ``reduced generalized determinant'' $\ol{\det} : \Out(\pi_1(E^\circ_{\ol{K}})^\meta)\rightarrow\Zhat\ps{\what{A}}^\times/\what{A}$. The above discussion proves:

\begin{prop}\label{prop_braid_like} For any $\sigma\in\Gamma_K$, we have
$$\ol{\det}(\rho_{E^\circ/K}(\sigma)) = \det(\rho_{E^\circ/K}(\sigma)^\ab) = \chi(\sigma) \qquad\text{in $\Zhat\ps{\Ahat}^\times/\Ahat$}$$
where the second determinant is the usual $\det : \Out(\pi_1(E_{\ol{K}})) = \GL(\Ahat)\rightarrow\Zhat^\times\subset\Zhat\ps{\Ahat}^\times/\Ahat$.
\end{prop}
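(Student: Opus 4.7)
The plan is to deduce the two equalities directly from the branch cycle argument and the Weil pairing, both of which are assembled in the paragraph immediately preceding the statement.

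For the first equality $\ol{\det}(\rho_{E^\circ/K}(\sigma)) = \det(\rho_{E^\circ/K}(\sigma)^\ab)$, I would choose a generating pair $x_1, x_2$ of $F = \pi_1^\tp(E^\circ(\CC))$ and set $c := [x_1, x_2]$, which by Proposition \ref{prop_basic}(d) is a $\ZZ[A]$-basis of $M'$ and hence determines the generalized determinant $\det_c$. Using a tangential base point at the origin $O$ to lift the outer action of $\Gamma_K$ to an honest action on $\what{M}$, the branch cycle identity recalled in \S\ref{ss_geometric_setup} gives $\sigma(c) = a_\sigma c^{\chi(\sigma)} a_\sigma^{-1}$ for some $a_\sigma \in \what{M}$. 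Recalling from \S\ref{ss_profinite_magnus} that conjugation of elements of $\what{M}'$ by an element of $\what{M}$ translates into multiplication by its image in $\Ahat$ under the $\Zhat\ps{\Ahat}$-module structure, we find $\det_c(\sigma) = \bar{a}_\sigma \chi(\sigma)$, where $\bar{a}_\sigma \in \Ahat$ denotes the image of $a_\sigma$. Passing to the quotient by $\Ahat$ yields $\ol{\det}(\rho_{E^\circ/K}(\sigma)) = \chi(\sigma)$ in $\Zhat\ps{\Ahat}^\times/\Ahat$.

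For the second equality, the abelianization $\rho_{E^\circ/K}(\sigma)^\ab$ is identified, via the commutative diagram in \S\ref{ss_geometric_setup}, with $\rho_{E/K}(\sigma) \in \GL(\what{H_1E}) = \GL(\Ahat)$, the standard Galois representation on the Tate module of $E$. The Weil pairing then supplies $\det(\rho_{E/K}(\sigma)) = \chi(\sigma)$, as already noted in the setup. Composing the two identities proves the proposition.

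The proof is largely bookkeeping: the substantive geometric input — namely that $[x_1, x_2]$ generates an inertia subgroup at $O$ on which Galois acts via the cyclotomic character up to conjugation, together with the Weil pairing — is already assembled in \S\ref{ss_geometric_setup}. The only step requiring any care is verifying that the conjugation factor $a_\sigma$, which a priori depends on the chosen lift of the outer action, becomes invisible modulo $\Ahat$; this is a direct consequence of Proposition \ref{prop_canonical_determinant}, which ensures that $\ol{\det}$ is well-defined on all of $\Out(\what{M})$.
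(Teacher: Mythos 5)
Your proof is correct and follows essentially the same route as the paper, which deduces the proposition directly from the branch cycle identity $\sigma([x_1,x_2]) = a_\sigma[x_1,x_2]^{\chi(\sigma)}a_\sigma^{-1}$ together with the Weil pairing, exactly as you do; your version just spells out the bookkeeping with $\det_c$ and the module structure on $\what{M}'$. (One small nitpick: the irrelevance of the lift and of $a_\sigma$ modulo $\Ahat$ is really the well-definedness of $\ol{\det}$ on $\Out(\what{M})$ built into Definition \ref{def_determinant} via Example \ref{ex_conjugation}, rather than Proposition \ref{prop_canonical_determinant}, which concerns independence of the basis $c$ — but this does not affect the argument.)
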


\subsection{Geometric consequences at infinite level}\label{ss_geometric_consequences_at_infinite_level}


Recall the setup and identifications of the previous section \S\ref{ss_geometric_setup}, where we now set $K = \QQ$, and set $E$ to be an elliptic curve over $\QQ$. Let $G$ be a finite 2-generated group. In \cite{Chen17} \S3, the first author defined the stack $\mM(G)_\QQ$, which is a (generally disconnected) finite \'{e}tale cover of the stack $\mM(1)_\QQ$ of elliptic curves (c.f. \S\ref{ss_motivation}).

\sgap

The geometric fiber of $\mM(G)_{\QQ}\rightarrow\mM(1)_{\QQ}$ above $E_{\Qbar}$ can be identified with $\Epi^\ext(\pi_1(E^\circ_{\Qbar}),G)$, the elements of which are called $G$-structures on $E_{\Qbar}$. The representation $\rho_{E^\circ/\QQ}$ gives an action of $\pi_1(\mM(1)_\QQ)$ on this set by acting on the domain, whose restriction to the subgroup $\what{\SL(H_1E)} = \pi_1(\mM(1)_{\Qbar},\ol{x})$ corresponds to the map
$$\what{\SL(A)}\hookrightarrow\Out(\what{F})$$
described in \S\ref{ss_metabelian_congruence}. Given $\varphi\in\Epi^\ext(\pi_1(E^\circ_{\Qbar}),G)$, the connected component of $\mM(G)_{\Qbar}$ containing the pair $(E,\varphi)$ is a connected finite \'{e}tale cover of $\mM(1)_{\Qbar}$, which under the Galois correspondence corresponds to the $\what{\SL(A)}\cong\what{\SL_2(\ZZ)}$-orbit of $\varphi$, or equivalently to the subgroup $\Gamma_\varphi := \Stab_{\what{\SL(A)}}\varphi\le\what{\SL(A)}$, and over $\CC$, to the modular curve $\hH/\Gamma_\varphi$.

\sgap

If $G$ is metabelian, then we may identify the geometric fiber of $\mM(G)_\QQ\rightarrow\mM(1)_\QQ$ with the set
$$\Epi^\ext(\pi_1(E^\circ_{\Qbar}),G) \cong \Epi^\ext(\pi_1(E^\circ_{\Qbar})^{\meta},G) = \Epi^\ext(\what{M},G)$$
Thus, one may view elements of $\Epi^\ext(\what{M},G)$ as $G$-structures on the elliptic curve $E_{\Qbar}$. Similarly, elements of $\Epi^\ext(\what{M},\what{M})$ should be viewed as $\what{M}$-structures, or ``metabelian level structures of infinite level'' on $E_{\Qbar}$, or equivalently compatible inverse systems of metabelian structures on $E_{\Qbar}$. The set of such $\what{M}$-structures can be identified with the geometric fiber of the pro-finite \'{e}tale Galois covering
$$\mM(\what{M})_\QQ := \varprojlim_{G}\mM(G)_\QQ$$
where $G$ ranges over all finite 2-generated metabelian groups. Its $\widehat{\SL_2(\ZZ)}$-orbits then correspond to ``modular curves of infinite level $\what{M}$''. The abelianization map $\ab : \what{M}\rightarrow\Ahat$ induces a profinite-\'{e}tale Galois covering $\mM(\what{M})_\QQ\rightarrow\mM(\Ahat)_\QQ$, where $\mM(\Ahat)_\QQ$ is defined to be
$$\mM(\Ahat)_\QQ := \varprojlim_{n\ge 1}\mM(\Ahat/n\Ahat)_\QQ$$

\sgap
Under our identifications, \ref{prop_braid_like} tells us that the image of $\Gamma_\QQ$ under $\rho_{E^\circ/\QQ}^\meta$ is contained in the subgroup of braid-like outer automorphisms $\Out^b(\pi_1(E^\circ_{\Qbar})^\meta)$, and by \ref{prop_braid_likes_give_splitting}, the same is true of $\what{\SL(A)} = \pi_1(\mM(1)_{\Qbar},\ol{x})$. The surjectivity of the cyclotomic character $\chi : \Gamma_\QQ\rightarrow \Zhat^\times$ implies:

\begin{cor}\label{cor_M1_monodromy} The image of the monodromy representation
$$\rho_{E^\circ/\QQ}^\meta : \pi_1(\mM(1)_\QQ)\cong\widehat{\SL(H_1E)}\rtimes\Gamma_\QQ\rightarrow \Out(\pi_1(E^\circ_{\Qbar})^\meta)$$
is precisely the subgroup of braid-like outer automorphisms $\Out^b(\pi_1(E^\circ_{\Qbar})^\meta)$. In particular,
$$\mM(\what{M})_\QQ\cong\bigsqcup_u\mM(\Ahat)_\QQ$$
where $u$ ranges over elements of $\Zhat\ps{\what{A}}^{\times'}/\what{A}\cong\IOut(\what{M})$.
\end{cor}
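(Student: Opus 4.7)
The plan is to decompose the argument into two parts: first, identifying the image of $\rho^\meta_{E^\circ/\QQ}$ as $\Out^b(\pi_1(E^\circ_{\Qbar})^\meta)$, and then reading off the decomposition of $\mM(\what M)_\QQ$ from this via the Galois correspondence.

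For the containment of the image inside $\Out^b(\what M)$, I would combine two results already in hand: Theorem \ref{thm_metabelian_congruence} shows the geometric part $\widehat{\SL(H_1E)}$ maps onto $\SL(\Ahat)$, which lies in $\Out^b(\what M)$ by Proposition \ref{prop_braid_likes_give_splitting} (or directly from Example \ref{ex_SL2Z}), while Proposition \ref{prop_braid_like} shows $\rho^\meta_{E^\circ/\QQ}(\Gamma_\QQ) \subseteq \Out^b(\what M)$. For the reverse containment, the isomorphism $\ab_* : \Out^b(\what M) \stackrel{\sim}{\to} \GL(\Ahat)$ of Proposition \ref{prop_braid_likes_give_splitting} reduces the problem to showing the composite $\ab_* \circ \rho^\meta_{E^\circ/\QQ} = \rho_{E/\QQ}$ surjects onto $\GL(\Ahat)$. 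The geometric part already contributes $\SL(\Ahat)$, and on $\Gamma_\QQ$ the composition $\det \circ \rho_{E/\QQ}$ equals the cyclotomic character $\chi$, which is surjective onto $\Zhat^\times$ because $K = \QQ$. Given any $g \in \GL(\Ahat)$, choose $\sigma \in \Gamma_\QQ$ with $\chi(\sigma) = \det g$; then $g \cdot \rho_{E/\QQ}(\sigma)^{-1}$ lies in $\SL(\Ahat)$ and is hit by the geometric part, so $g$ itself is in the image.

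For the decomposition, the geometric fiber of $\mM(\what M)_\QQ \to \mM(1)_\QQ$ at $E_{\Qbar}$ is $\Epi^\ext(\pi_1(E^\circ_{\Qbar})^\meta, \what M) = \Epi^\ext(\what M, \what M) = \Out(\what M)$, where the last equality uses that $\what M$ is Hopfian (surjective self-endomorphisms are automorphisms). The action of $\pi_1(\mM(1)_\QQ)$ is by composition on the domain and factors through the image $\Out^b(\what M)$, so its orbits are cosets of $\Out^b(\what M)$ in $\Out(\what M)$. By the semidirect decomposition $\Out(\what M) = \IOut(\what M) \rtimes \Out^b(\what M)$ of Theorem \ref{thm_split}, these cosets are in canonical bijection with $\IOut(\what M) \cong \Zhat\ps{\what A}^{\times'}/\what A$ via the reduced determinant. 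The stabilizer of any point in any such orbit is $\ker(\rho^\meta_{E^\circ/\QQ})$, which equals $\ker(\rho_{E/\QQ})$ since $\ab_*$ is injective on $\Out^b$. As this latter kernel also equals the stabilizer describing $\mM(\Ahat)_\QQ \to \mM(1)_\QQ$ (which is connected by the same surjectivity argument), the profinite Galois correspondence identifies each connected component of $\mM(\what M)_\QQ$ with $\mM(\Ahat)_\QQ$ as a pro-\'etale cover of $\mM(1)_\QQ$.

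The main obstacle is the surjectivity argument for the reverse inclusion, which is genuinely specific to $K = \QQ$: the geometric containment into $\Out^b$ works over any base field, but the full equality requires the image of $\chi$ to be all of $\Zhat^\times$, and over a general number field one would only obtain a finite-index subgroup of $\Out^b(\what M)$. Once the image is identified, the product decomposition is a formal consequence of the semidirect product structure of $\Out(\what M)$ established in Theorem \ref{thm_split}.
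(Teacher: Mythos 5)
Your proposal is correct and follows essentially the same route as the paper: containment of the image in $\Out^b$ via Proposition \ref{prop_braid_like} together with Theorem \ref{thm_metabelian_congruence} and Proposition \ref{prop_braid_likes_give_splitting}, equality via the surjectivity of the cyclotomic character over $\QQ$ and the isomorphism $\ab_*:\Out^b(\what{M})\rightiso\GL(\Ahat)$, and the disjoint-union decomposition by identifying the fiber with $\Out(\what{M})$ (Hopfian), classifying the $\Out^b$-orbits by $\IOut(\what{M})\cong\Zhat\ps{\what{A}}^{\times'}/\what{A}$ via Theorem \ref{thm_split}, and applying the Galois correspondence. Your coset/stabilizer bookkeeping just makes explicit what the paper leaves implicit.
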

In the next section we will show that an analogous decomposition holds at all finite levels as well (Theorem \ref{thm_metabelian_is_e_congruence}). For elliptic curves over general subfields $K\subset\Qbar$, the following is a direct consequence of \ref{prop_braid_like} and \ref{prop_braid_likes_give_splitting}.
\begin{cor} For any elliptic curve $E$ over a number field $K$ with associated metabelian Galois representation $\rho_{E^\circ/K}^\meta : \Gamma_K\rightarrow\Out(\pi_1(E^\circ_{\Qbar})^\meta)$, the Galois image $\rho_{E^\circ/K}^\meta(\Gamma_K)\le Out(\pi_1(E^\circ_{\Qbar})^\meta)$ is carried isomorphically onto its image in $\Aut(\pi_1(E^\circ_{\Qbar})^\ab) = \Aut(\pi_1(E_{\Qbar}))\cong\GL_2(\Zhat)$ via $\ab_* : \Out(\pi_1(E^\circ_{\Qbar})^\meta)\rightarrow\Aut(\pi_1(E^\circ_{\Qbar})^\ab)$.
\end{cor}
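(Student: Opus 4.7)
The plan is to deduce this directly from Proposition \ref{prop_braid_like} together with Proposition \ref{prop_braid_likes_give_splitting}, using the diagrammatic setup of \S\ref{ss_geometric_setup} applied to arbitrary $K\subset\Qbar$. The structural observation is that the braid-like subgroup $\Out^b(\what{M})$ was designed exactly so that abelianization restricts to an isomorphism onto $\GL(\Ahat)$; once we show the Galois image lands inside $\Out^b(\what{M})$, the corollary is immediate.

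First, I would invoke Proposition \ref{prop_braid_like}: for every $\sigma\in\Gamma_K$, we have $\ol{\det}(\rho_{E^\circ/K}^\meta(\sigma)) = \chi(\sigma) = \det(\rho_{E^\circ/K}^\meta(\sigma)^\ab)$ in $\Zhat\ps{\Ahat}^\times/\Ahat$. This is precisely the condition in Definition \ref{def_braid_like_automorphisms} characterizing braid-like outer automorphisms. Hence
\[
\rho_{E^\circ/K}^\meta(\Gamma_K)\;\subseteq\;\Out^b\bigl(\pi_1(E^\circ_{\Qbar})^\meta\bigr).
\]

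Next, by Proposition \ref{prop_braid_likes_give_splitting}, the abelianization map $\ab_*:\Out(\what{M})\to\GL(\Ahat)$ sends $\Out^b(\what{M})$ isomorphically onto $\GL(\Ahat)$. In particular, $\ab_*$ is injective on $\Out^b(\what{M})$, hence injective on any subgroup of it. Applied to $\rho_{E^\circ/K}^\meta(\Gamma_K)$, this gives that $\ab_*$ restricts to an isomorphism from $\rho_{E^\circ/K}^\meta(\Gamma_K)$ onto its image in $\GL(\Ahat)$, which by the diagram of \S\ref{ss_geometric_setup} is exactly the image of the classical Galois representation $\rho_{E/K}(\Gamma_K)\subseteq\GL(\what{H_1E})\cong\GL_2(\Zhat)$.

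Since both ingredients are already in hand, there is no substantive obstacle; the only point meriting care is bookkeeping the identifications of \S\ref{ss_geometric_setup} (writing $\what{M}=\pi_1(E^\circ_{\Qbar})^\meta$ and $\Ahat=\pi_1(E_{\Qbar})$) so that the reduced generalized determinant $\ol{\det}$ on $\Out(\pi_1(E^\circ_{\Qbar})^\meta)$ is the one produced by Proposition \ref{prop_canonical_determinant}, and checking that the hypothesis ``$K$ a number field'' is used only through $K\subset\Qbar$ (the Galois-theoretic input in \ref{prop_braid_like} comes from the cyclotomic character and the Weil pairing, both of which make sense for any such $K$).
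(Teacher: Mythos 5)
Your proposal is correct and follows exactly the paper's route: the paper states this corollary as a direct consequence of Propositions \ref{prop_braid_like} and \ref{prop_braid_likes_give_splitting}, which is precisely your argument (Galois image is braid-like by the determinant comparison, and $\ab_*$ is injective on $\Out^b(\what{M})$). Your remark that only $K\subset\Qbar$ is needed matches the paper's own phrasing as well.
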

This is an extension of a theorem of Davis (c.f. \cite{Davis13}, Theorem 3.3), where the result was proven in the case where $\what{M}$ is replaced by ``$M_{2,p}(p)$'' (the rank 2 free pro-$p$ metabelian group of commutator exponent $p$). We note that the result of Davis relied on proving that $\IAut(M_{2,p}(p)) = \Inn(M_{2,p}(p))$, which is far from true in $\what{M}$.

\subsection{Geometric consequences at finite level - Structure of $\mM(G)_\QQ$}

In this section, the setup and identifications made in \S\ref{ss_geometric_setup} remain in force, and in addition we will fix an isomorphism $A\cong\ZZ^2$ via which we will identify $A = \ZZ^2$ and consequently also identify $\Ahat = \Zhat^2$, and $\GL(\Ahat) = \GL_2(\Zhat)$. We will also use the map $\Out(\what{M})\rightarrow\GL(\Ahat)$ to identify $\Out^b(\what{M}) = \GL(\Ahat) = \GL_2(\Zhat)$ (c.f. \ref{prop_braid_likes_give_splitting}).
\sgap

For an integer $n\ge 1$, and a subgroup $H\le\GL_2(\ZZ/n)$, we call the set 
$$\Epi^\ext(\pi_1(E_{\Qbar}),(\ZZ/n)^2)/H$$
of $H$-equivalence classes of surjections the set of ``structures of level $H$'' on $E_{\Qbar}$. It can be identified with the geometric fiber above $E_{\Qbar}$ of the covering $\mM((\ZZ/n)^2)_\QQ/H\rightarrow\mM(1)_\QQ$  (c.f. \cite{Chen17} Theorem 3.6.3).
\begin{thm}\label{thm_metabelian_is_e_congruence} Let $G$ be a finite 2-generated metabelian group. The group $\Aut(G)$ acts (via its quotient $\Out(G)$) as automorphisms of the cover $\mM(G)_\QQ\rightarrow\mM(1)_\QQ$, and we have
\begin{itemize}
\item[(1)] $\Out(G)$ permutes transitively the connected components of $\mM(G)_\QQ$, which are hence all isomorphic, and
\item[(2)] If $G$ is of exponent $e$, then there is a subgroup $H\le\GL_2(\ZZ/e)$ such that each connected component of $\mM(G)_\QQ$ is isomorphic to $\mM((\ZZ/e)^2)_\QQ/H$.
\end{itemize}
In particular, for any elliptic curve $E$ over a subfield $K\subset\Qbar$, $G$-structures on $E$ are equivalent to congruence structures of level $H$.
\end{thm}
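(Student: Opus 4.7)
The plan is to combine the infinite-level structure from Corollary \ref{cor_M1_monodromy} and the transitivity result Corollary \ref{cor_isomorphic_GL2_orbits} with a descent argument controlled by the exponent $e$. Part (1) follows at once: by Corollary \ref{cor_M1_monodromy} the image of $\rho_{E^\circ/\QQ}^\meta$ equals $\Out^b(\what{M}) \cong \GL_2(\Zhat)$, so the connected components of $\mM(G)_\QQ$ correspond bijectively to the $\Out^b(\what{M})$-orbits on the geometric fiber $\Epi^\ext(\what{M}, G)$, and Corollary \ref{cor_isomorphic_GL2_orbits} says $\Out(G)$ permutes these orbits transitively. By Theorem \ref{thm_IAEnds_are_normal} and functoriality of $\mM(-)$, this $\Out(G)$-action is realized by automorphisms of the cover $\mM(G)_\QQ \to \mM(1)_\QQ$.

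For part (2), fix a representative $\varphi_0 \in \Epi^\ext(\what{M}, G)$ of a chosen orbit; its stabilizer $\Gamma_{\varphi_0}$ in $\Out^b(\what{M}) = \GL_2(\Zhat)$ is open. It suffices to show $\Gamma_{\varphi_0} \supseteq K(e) := \Ker(\GL_2(\Zhat) \to \GL_2(\ZZ/e))$, for then $\Gamma_{\varphi_0} = \pi_e^{-1}(H)$ with $H := \Gamma_{\varphi_0}/K(e) \le \GL_2(\ZZ/e)$, and the corresponding component is the cover of $\mM(1)_\QQ$ classified by $\pi_e^{-1}(H)$, which is $\mM((\ZZ/e)^2)_\QQ/H$. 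Here one uses that $\mM((\ZZ/e)^2)_\QQ$ is itself the cover of $\mM(1)_\QQ$ corresponding to $K(e) \subseteq \GL_2(\Zhat)$: the $\GL_2(\Zhat)$-action on $\Epi^\ext(\what{M}, (\ZZ/e)^2) = \GL_2(\ZZ/e)$ is transitive with stabilizer $K(e)$. The final claim of the theorem is then just the identification of geometric fibers.

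To prove $K(e) \subseteq \Gamma_{\varphi_0}$, I would exploit the exponent-$e$ hypothesis as follows. Since $G'$ is abelian of exponent dividing $e$ with $\what{A}$ acting on $G'$ through $\what{A}/e\what{A}$, the ideal $I_{\varphi_0} \subseteq \Zhat\ps{\what{A}}$ defined by $\Ker(\varphi_0') = I_{\varphi_0}\what{M}'$ contains the ideal $I_e := (e, a_1^e - 1, a_2^e - 1)$. For $g \in K(e)$, the functoriality action of $g$ on $\Zhat\ps{\what{A}}$ is trivial modulo $I_e$, and hence preserves the larger ideal $I_{\varphi_0}$. Using the explicit form $\Ker(\varphi_0) = \langle I_{\varphi_0}\what{M}', x_1^{d_1}c^{-s_1}, x_2^{d_2}c^{-s_2}\rangle$ with $d_i \mid e$ (from the exponent hypothesis applied to the images of $x_i$ in $G$), together with a computation patterned on Proposition \ref{prop_stability} and Lemma \ref{lemma_commutation_relation}, the braid-like lift of $g$ via the section of Proposition \ref{prop_braid_likes_give_splitting} preserves each generator of $\Ker(\varphi_0)$ modulo $\Ker(\varphi_0)$, and the induced automorphism of $G$ turns out to be inner, whence $g$ fixes $\varphi_0$ in $\Epi^\ext(\what{M}, G)$. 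The main obstacle is the explicit bookkeeping for the twist generators $x_i^{d_i} c^{-s_i}$: one must exploit simultaneously $g \equiv 1 \pmod{e}$ (to make the braid-like lift $\tilde{g}$ act trivially on $x_i^{d_i}$ modulo $I_{\varphi_0}$) and $d_i \mid e$ (so the telescoping sum $1 + a_i + \cdots + a_i^{d_i - 1}$ appearing in $(\tilde{g}(x_i))^{d_i}$ is controlled modulo $I_{\varphi_0}$, via Lemma \ref{lemma_deligne_trick}).
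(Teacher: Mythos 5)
Your part (1) and your reduction of part (2) to the single containment $\what{H}_{\varphi_0}\supseteq K(e)$ are exactly the paper's argument (Corollaries \ref{cor_M1_monodromy} and \ref{cor_isomorphic_GL2_orbits}, plus Galois theory), and your preliminary observations are fine: $I_e=(e,a_1^e-1,a_2^e-1)\subseteq I_{\varphi_0}$, and any $g\in K(e)$ acts trivially on $\Zhat\ps{\what{A}}/I_e$, hence preserves $I_{\varphi_0}$.

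The genuine gap is the step you yourself flag as ``the main obstacle'': you never show that the braid-like lift $\tilde{g}$ of a general $g\in K(e)$ carries the class of $\varphi_0$ to itself, and the assertion that ``the induced automorphism of $G$ turns out to be inner'' is exactly the theorem, not a computation you have set up. A computation ``patterned on Proposition \ref{prop_stability}'' is not available here: that proposition works because IA-endomorphisms admit the explicit parametrization $\gamma_r(x_i)=[x_1,x_2]^{r_i}x_i$, whereas the braid-like section of Proposition \ref{prop_braid_likes_give_splitting} is characterized only by the determinant condition $\ol{\det}(\tilde g)=\det(g)$ and gives no formula for $\tilde{g}(x_1),\tilde{g}(x_2)$. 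Knowing $g\equiv 1\pmod e$ controls the abelianization of $\tilde g(x_i)$ and (via $\ol{\det}$) the action on $\what{M}'$ modulo $I_{\varphi_0}$, but the conjugacy class of $\varphi_0\circ\tilde{g}$ depends on the $\what{M}'$-components of $\tilde{g}(x_1),\tilde{g}(x_2)$ themselves, which this data does not determine; measuring that discrepancy is precisely the IA-versus-inner problem. Nor can you escape by continuity and generators: reducing to the dense subgroup generated by $\Gamma(e)\le\SL_2(\ZZ)$ and the diagonal elements, the only matrices whose lifts visibly fix $\varphi_0$ by direct bookkeeping are the $\SL_2(\ZZ)$-conjugates of $\spmatrix{1}{e}{0}{1}$ (and the diagonals), and the normal subgroup these conjugates generate has infinite index in $\Gamma(e)$ for large $e$ --- this is the failure of the congruence subgroup property for $\SL_2(\ZZ)$ --- so no element-by-element computation reaches all of $\Gamma(e)$. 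This is exactly why the paper's proof of Lemma \ref{lemma_precursor_to_surprise} must inject two further inputs that your proposal neither uses nor replaces: Theorem \ref{thm_metabelian_congruence}, guaranteeing that $\Gamma_\varphi$ is a \emph{congruence} subgroup, and the $p$-adic/formal-group Lemma \ref{lemma_awesome}, which bootstraps from the three explicit elements $1+eX_i$ to all of $\Gamma(e)$ inside the congruence completion. Without those (or a genuinely new argument for the deferred computation), part (2) remains unproved.
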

\begin{proof} From our identifications, the geometric fiber of $\mM(G)_\QQ\rightarrow\mM(1)_\QQ$ over $E_{\Qbar}$ is given by $\Epi^\ext(\what{M},G)$, which admits an action of $\Out(G)$ on the target, commuting with the monodromy action of $\pi_1(\mM(1)_\QQ)$ on the source. By corollary \ref{cor_M1_monodromy}, the action of $\pi_1(\mM(1)_\QQ)$ coincides with the natural action of $\Out^b(\what{M})$. By corollary \ref{cor_isomorphic_GL2_orbits}, we find that $\Out(G)$ acts transitively on the $\Out^b(\what{M})$-orbits, and hence all $\pi_1(\mM(1)_\QQ)$-orbits are isomorphic. By Galois theory, this implies that all connected components of $\mM(G)_\QQ$ are isomorphic, which proves (1).

\sgap

Let $\varphi : \what{M}\rightarrow G$ be a surjection, and let $\what{H}_\varphi$ denote the stabilizer in $\Out^b(\what{M}) = \GL_2(\Zhat)$ of its conjugacy class. Let $G\Gamma(e) := \Ker(\GL_2(\Zhat)\rightarrow\GL_2(\ZZ/e))$. It will suffice to show that $\what{H}_\varphi \supset G\Gamma(e)$. Indeed, if this holds, then we may set $H := \what{H}_\varphi/G\Gamma(e)$, in which case it follows from \ref{cor_M1_monodromy} that the $\pi_1(\mM(1)_\QQ)$-orbit of $\varphi$ is isomorphic to $\Epi^\ext(\pi_1(E_{\Qbar}),(\ZZ/e)^2)/H$ as sets with $\pi_1(\mM(1)_\QQ)$-action. By Galois theory, this would imply that the component of $\mM(G)_\QQ$ corresponding to $\varphi$ is isomorphic to $\mM((\ZZ/e)^2)_\QQ/H$. The containment $\what{H}_\varphi\supset G\Gamma(e)$ is a consequence of the following lemmas.
\end{proof}

\begin{lemma}\label{lemma_awesome} Let $\Gamma\le\SL_2(\ZZ)$ be a congruence subgroup which contains $1 + eX_i$ for $i = 1,2,3$, where
$$X_1 := \ttmatrix{0}{1}{0}{0},\quad X_2 := \ttmatrix{0}{0}{1}{0},\quad X_3 := \ttmatrix{1}{-1}{1}{-1}.$$
Then $\Gamma$ contains the principal congruence subgroup $\Gamma(e)$.
\end{lemma}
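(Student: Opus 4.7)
The plan is to pass to the congruence completion. Let $\bar\Gamma$ denote the closure of $\Gamma$ in $\SL_2(\Zhat)$. Since $\Gamma$ is congruence, $\Gamma = \SL_2(\ZZ) \cap \bar\Gamma$, so the desired containment $\Gamma(e) \subset \Gamma$ is equivalent to showing that the closed subgroup $N \le \SL_2(\Zhat)$ topologically generated by $\{1 + eX_i\}_{i=1,2,3}$ contains $G\Gamma(e) := \ker(\SL_2(\Zhat) \to \SL_2(\ZZ/e))$. By the Chinese Remainder Theorem this decouples over primes: setting $k := v_p(e)$, one must show for each prime $p$ that the projection $N_p \le \SL_2(\ZZ_p)$ contains $\ker(\SL_2(\ZZ_p) \to \SL_2(\ZZ/p^k))$.

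Two observations drive the argument. First, $X_i^2 = 0$ for all three $i$, so $(1 + c X_i)^n = 1 + nc X_i$ for every integer $n$ and every scalar $c$; by continuity, $N_p$ contains $1 + t X_i$ for every $t \in p^k \ZZ_p$ and $i = 1, 2, 3$. Second, the matrices $X_1, X_2, X_3$ span $\mathfrak{sl}_2(\ZZ)$ as a $\ZZ$-module, since $X_1 - X_2 + X_3 = \operatorname{diag}(1, -1)$ together with $X_1, X_2$ constitute the standard basis of $\mathfrak{sl}_2$; this spanning property is the whole point of including the third generator $1+eX_3$ in the hypothesis.

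When $p \nmid e$ the conclusion is immediate, as $N_p$ contains $1 + X_1$ and $1 + X_2$, which generate $\SL_2(\ZZ)$ and hence are dense in $\SL_2(\ZZ_p)$. When $k \ge 1$, the plan is to show by induction on $n \ge k$ that the image of $N_p$ in $\SL_2(\ZZ/p^n)$ contains $\Gamma_n := \ker(\SL_2(\ZZ/p^n) \to \SL_2(\ZZ/p^k))$; by profinite compactness this forces $N_p \supset \ker(\SL_2(\ZZ_p) \to \SL_2(\ZZ/p^k))$. The base case $n = k$ is vacuous. For the step $n \to n+1$, the kernel of $\Gamma_{n+1} \twoheadrightarrow \Gamma_n$ is the abelian group $\ker(\SL_2(\ZZ/p^{n+1}) \to \SL_2(\ZZ/p^n)) \cong \mathfrak{sl}_2(\FF_p)$, abelian because the cross term $p^{2n} Y_1 Y_2$ vanishes modulo $p^{n+1}$ once $n \ge 1$, with the isomorphism given by $1 + p^n Y \leftrightarrow Y \bmod p$. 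By the first observation $N_p$ contains $1 + p^n X_i$ for $i = 1, 2, 3$, and by the second these images span $\mathfrak{sl}_2(\FF_p)$. Combined with the inductive hypothesis that $N_p$ surjects onto $\Gamma_n$, we conclude that $N_p$ surjects onto $\Gamma_{n+1}$, completing the induction.

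The one delicate point I anticipate is the prime $p = 2$: the reflex is to invoke the $p$-adic exponential/logarithm and argue via Lie algebras, but the convergence of $\log$ at $p = 2$ is limited, so this route would require separately handling small values of $v_2(e)$. The induction above sidesteps the issue entirely by working at the level of finite quotients and using only the elementary fact that the graded pieces $\ker(\Gamma_{n+1} \twoheadrightarrow \Gamma_n)$ are abelian for $n \ge 1$, so the argument applies uniformly to all primes $p$.
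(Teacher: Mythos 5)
Your argument is correct, and on the essential point it takes a different route from the paper's. Both proofs reduce to one prime at a time inside $\SL_2(\Zhat)=\prod_p\SL_2(\ZZ_p)$, handle $p\nmid e$ identically (raise $1+eX_i$ to the power $e^{-1}\in\ZZ_p$ and use that $1+X_1,1+X_2$ generate a dense copy of $\SL_2(\ZZ)$), and both ultimately rest on the two facts you isolate: $X_i^2=0$, and $X_1,X_2,X_3$ span $\mathfrak{sl}_2$ over $\ZZ$, hence over $\FF_p$ for every $p$, including $p=2$. The difference is in the case $p^k\mid\mid e$: the paper uses the congruence hypothesis a second time to get an a priori inclusion $\ol{\Gamma(p^s)}\subset\ol{\Gamma_p}$ for some large $s$, identifies $\Gamma(p^{s-1})/\Gamma(p^s)\cong\FF_p^3$ via the formal group law of $\SL_2$ in coordinates near the identity, and inducts downward from $s$ to $k$; you instead induct upward through the finite quotients $\SL_2(\ZZ/p^n)$, using only the elementary fact that $\ker(\SL_2(\ZZ/p^{n+1})\rightarrow\SL_2(\ZZ/p^n))\cong\mathfrak{sl}_2(\FF_p)$ is abelian for $n\ge 1$, and then compactness. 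This is more elementary (no formal group machinery; note the paper's coordinate argument is integral and never invokes $\exp/\log$, so $p=2$ was never actually an issue there either), and it yields something slightly stronger: the closed subgroup $N$ topologically generated by the three elements $1+eX_i$ alone already contains $G\Gamma(e)$, the congruence hypothesis entering only through the transfer $\Gamma=\SL_2(\ZZ)\cap\ol{\Gamma}$. Two small glosses, both at the same level of informality as the paper's own write-up: what you need (and prove) is one implication, not an equivalence; and reducing to statements about the projections $N_p$ is not, for a general closed subgroup of $\prod_p\SL_2(\ZZ_p)$, a formal consequence of CRT --- here it is legitimate because your first observation gives $(1+eX_i)^t=1+teX_i\in N$ for every $t\in\Zhat$, and taking $t$ to be the idempotent of $\Zhat$ supported at $p$ places a copy of each $N_p$, embedded in the single coordinate $p$, inside $N$, whence $N\supset\prod_p N_p\supset G\Gamma(e)$.
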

\begin{proof} We may view $\Gamma$ as a subgroup of $\SL_2(\Zhat) = \prod_p \SL_2(\ZZ_p)$. Let $\Gamma_p$ denote its image in $\SL_2(\ZZ_p)$ with closure $\ol{\Gamma_p}$. We wish to show that for every $p\nmid e$, we have $\ol{\Gamma_p} = \SL_2(\ZZ_p)$, and for every $p^r\mid\mid e$, we have $\ol{\Gamma_p}\supset\ol{\Gamma(p^r)}$.

\sgap

First we note that for any $n\in\ZZ$, we have $1+nX_i = (1+X_i)^n$. More generally, for any $p$, the closed subgroup of $\SL_2(\ZZ_p)$ generated by $1+X_i$ is isomorphic to the additive group of $\ZZ_p$ as topological groups, and hence for any $a,b\in\ZZ_p$, we have $(1+aX_i)^b = 1+abX_i$ in $\SL_2(\ZZ_p)$.

\sgap

Now suppose $p\nmid e$. Let $e^{-1}$ denote the inverse in $\ZZ_p$. Then we have $(1+eX_i)^{e^{-1}} = 1+X_i\in\ol{\Gamma_p}$, but $\SL_2(\ZZ)$ is generated by the $1+X_i$'s, and being dense in $\SL_2(\ZZ_p)$, this shows that $\ol{\Gamma_p} = \SL_2(\ZZ_p)$ for $p\nmid e$.

\sgap

Now suppose $e = p^ru$ where $p\nmid u$. For ease of notation we will write $\Gamma := \ol{\Gamma_p}$ and $\Gamma(p^k)$ for $\ol{\Gamma(p^k)}$. Viewing $\SL_2(\ZZ_p)$ as an analytic Lie group, it admits a 3-dimensional formal group law $\mf{F}$ defined over $\ZZ$ and convergent on $(p\ZZ_p)^3$. For $k\ge 1$, let $G(p^k)$ be the group on the set $(p^k\ZZ_p)^3$, with group operation given by $\mf{F}$. Then, for $k\ge 1$ we have an isomorphism of groups
\begin{equation}\label{eq_formal_group}
\Gamma(p^k)\cong G(p^k) \qquad \ttmatrix{1}{0}{0}{1} + \ttmatrix{a}{b}{c}{d}\mapsto (a,b,c)\in(p^k\ZZ_p)^3
\end{equation}
Here, we view $(a,b,c)$ as representing the matrix $\spmatrix{1+a}{b}{c}{1+d}$, where $d$ is uniquely determined by the equation
$$(1+a)(1+d)-bc = 1$$
as a power series in $a,b,c$ with zero constant term. From this perspective, the isomorphism (\ref{eq_formal_group}) sends $1+eX_i$ to $eX_i$. Since $\Gamma$ is an open subgroup, $\Gamma\supset \Gamma(p^s)$ for some $s$, where we may assume $s-1 \ge r$. Then, $\Gamma(p^{s-1})/\Gamma(p^s)$ is isomorphic to $G(p^{s-1})/G(p^s)$ which by standard formal group arguments is isomorphic to the additive group $(p^{s-1}\ZZ_p/p^s\ZZ_p)^3\cong \FF_p^3$ on its underlying (quotient) set. Thus, we have an inclusion of $\FF_p$-vector spaces
\begin{equation}\label{eq_inclusion}
\frac{\Gamma\cap\Gamma(p^{s-1})}{\Gamma(p^s)}\subset\frac{\Gamma(p^{s-1})}{\Gamma(p^s)}\quad\Big(\;\;\cong G(p^{s-1})/G(p^s)\cong \FF_p^3\Big).
\end{equation}
Since $1+up^rX_i\in \Gamma$, as before we can write $(1+up^rX_i)^{u^{-1}p^{s-1-r}} = 1+p^{s-1}X_i\in\Gamma$, and hence $p^{s-1}X_i\in G(p^{s-1})$. Since the (coordinates $a,b,c$ of) $p^{s-1}X_i$ span the space $(p^{s-1}\ZZ_p)^3$, we find that the inclusion (\ref{eq_inclusion}) must be an equality - that is to say, $\Gamma\supset\Gamma(p^{s-1})$. By induction, we find that $\Gamma\supset\Gamma(p^r)$, as desired.
\end{proof}

\begin{lemma}\label{lemma_precursor_to_surprise} Let $G$ be a finite 2-generated metabelian group of exponent $e$. Let $\varphi : \what{M}\rightarrow G$ be a surjection. Let $\what{H}_\varphi\le\Out^b(\what{M})$ denote the stabilizer of the conjugacy class of $\varphi$ in $\Out^b(\what{M})$. Let $G\Gamma(e) := \Ker(\GL_2(\Zhat)\rightarrow\GL_2(\ZZ/e))$. Then, under our identification $\GL_2(\Zhat) = \Out^b(\what{M})$, we have $\what{H}_\varphi \supset G\Gamma(e)$.
\end{lemma}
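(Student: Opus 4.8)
The plan is to reduce the containment $\what{H}_\varphi\supset G\Gamma(e)$ to the $\SL_2$-statement of Lemma \ref{lemma_awesome}, by analyzing how the generators $1+eX_i$ act on $\varphi$. Since $G$ is metabelian of exponent $e$, the surjection $\varphi:\what{M}\twoheadrightarrow G$ factors through $\what{M}/K$ where $K=\Ker(\varphi)$, and $K$ contains $x_j^{e}$ for any generating pair since every element of $G$ has order dividing $e$; more precisely, choosing generators $x_1,x_2$ adapted to $K$ as in the discussion preceding Proposition \ref{prop_stability}, we have $x_j^{d_j}=[x_1,x_2]^{s_j}$ in $G$ with $d_j\mid e$, and the commutator subgroup $G'$ is a module over $\Zhat\ps{\what{A}}/I_\varphi$ annihilated by appropriate elements. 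First I would record that since $\Out^b(\what{M})=\GL_2(\Zhat)$ acts on $\Epi^\ext(\what{M},G)$ through a finite quotient, $\what{H}_\varphi$ is open, hence closed, so it suffices to check membership of a topologically generating set of $G\Gamma(e)$, and by Lemma \ref{lemma_awesome} (applied after intersecting with $\SL_2$) it suffices to show that the elements $1+eX_1$, $1+eX_2$, and $1+eX_3$ all stabilize the conjugacy class of $\varphi$, together with a single element of determinant generating $(\ZZ/e)^\times$ to handle the non-$\SL_2$ part.

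The core computation is then: realize $1+eX_i\in\SAut(F)$ (or rather a lift to $\Aut(\what{M})$) explicitly, using generators like $\gamma_E,\gamma_T$ from Example \ref{ex_SL2Z}; for instance $1+eX_1$ corresponds to the automorphism $x_1\mapsto x_1$, $x_2\mapsto x_1^{e}x_2$ up to an IA-correction, and similarly for $X_2$. Composing such an automorphism $\sigma$ with $\varphi$ gives $\varphi\circ\sigma$, and I must show $\varphi\circ\sigma$ is conjugate to $\varphi$ in $G$. Because $\sigma$ changes a generator by a word whose image in $G$ lies in $G$ (and whose abelianization changes by a multiple of $e$, hence trivially in $G^\ab$), the two surjections agree on $G^\ab$; by Corollary \ref{good_cor}'s philosophy — or directly by Proposition \ref{prop_IA_is_inner} applied to $\sigma^{-1}\cdot(\text{lift adjusting }\varphi\circ\sigma\text{ back})$ — conjugacy reduces to checking that the commutators $[\varphi\sigma(x_1),\varphi\sigma(x_2)]$ and $[\varphi(x_1),\varphi(x_2)]$ are conjugate in $G$, equivalently that the relevant Bachmuth determinant lands in $\what{A}$ modulo $I_\varphi$. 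Using the matrix formula (\ref{eq_matrix_of_gamma_r}) for the IA-part and Lemma \ref{lemma_commutation_relation}, the change in the commutator is governed by an element of $\Zhat\ps{\what{A}}$ of the form $e\cdot(\text{something})$, and since $G'$ has exponent dividing $e$ (being a quotient of the exponent-$e$ group $G$), this element acts trivially on $G'$. The element $X_3=\spmatrix{1}{-1}{1}{-1}$ is handled the same way since $1+eX_3$ is conjugate in $\GL_2(\ZZ)$ to a product involving $1+eX_1$, $1+eX_2$, or can be written directly as an explicit automorphism whose effect on $x_1,x_2$ is an $e$-th-power-type perturbation.

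The main obstacle I anticipate is the bookkeeping in the second step: matching the abstract generators $1+eX_i$ of $\GL_2(\Zhat)$ with honest automorphisms of $\what{M}$ and tracking the IA-correction terms precisely enough to see that the discrepancy between $\varphi$ and $\varphi\circ\sigma$ is an inner automorphism of $G$ rather than merely an IA-automorphism. The key leverage is that $G$ having exponent $e$ forces $e\cdot M=0$ on every relevant $\Zhat\ps{\what{A}}/I_\varphi$-module quotient (in particular on $G'$ and on $G^\ab$), so all the $e$-scaled perturbations introduced by $1+eX_i$ vanish in $G$; combined with the fact — from Proposition \ref{prop_IA_is_inner} — that an IA-automorphism of $\what{M}$ descending to an automorphism of $G$ that is trivial on $G^\ab$ and trivial on $G'$ must be inner on $G$, this closes the argument. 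A subtlety worth flagging is that $d_j$ may be a proper divisor of $e$, so the cyclotomic-style sums $1+a_j+\cdots+a_j^{d_j-1}$ appearing via Lemma \ref{lemma_deligne_trick} must be handled with $d_j$, not $e$; but since $d_j\mid e$ and we only need the perturbation to die in $G'$ (exponent $\mid e$), this causes no trouble. Finally, the determinant-generating element is dispatched by Lemma \ref{lemma_diagonal_automorphisms}, whose diagonal automorphism $x_2\mapsto x_2^{u}$ with $u\equiv 1\pmod e$ visibly fixes $\varphi$ up to conjugacy.
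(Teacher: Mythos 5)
Your overall skeleton (reduce the $\SL_2$-part to Lemma \ref{lemma_awesome}, handle the determinant part with the diagonal automorphisms of Lemma \ref{lemma_diagonal_automorphisms}) is the same as the paper's, but there are two genuine gaps. First, Lemma \ref{lemma_awesome} requires the hypothesis that $\Gamma_\varphi := \what{H}_\varphi\cap\SL_2(\ZZ)$ is a \emph{congruence} subgroup, and you supply it only through the assertion that $\Out^b(\what{M})=\GL_2(\Zhat)$ ``acts through a finite quotient'', i.e.\ that $\what{H}_\varphi$ is open. That is not automatic from finite index: $\GL_2(\Zhat)$ is not topologically finitely generated (its determinant quotient $\Zhat^\times$ is not), and it has finite-index subgroups that are not open (pull back a discontinuous character of $\Zhat^\times$ along $\det$), so ``stabilizer of a point of a finite set'' does not by itself yield openness. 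The openness/continuity you want is essentially the congruence phenomenon the paper proves; the correct step here is to first exhibit the three elements $1+eX_i$ in $\Gamma_\varphi$ and then invoke Theorem \ref{thm_metabelian_congruence} to know $\Gamma_\varphi$ is congruence, as the paper does. (Closedness of $\what{H}_\varphi$, which you do need to pass from $\Gamma(e)$ to $\ol{\Gamma(e)}$ and then to $G\Gamma(e)$, is fine.)

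Second, your mechanism for verifying that $1+eX_i$ stabilizes the class of $\varphi$ is both overcomplicated and partly wrong. The principle you lean on --- that an IA-automorphism descending to $G$ and acting trivially on $G^\ab$ and on $G'$ must be inner on $G$, ``by Proposition \ref{prop_IA_is_inner}'' --- is not what that proposition says (it characterizes inner automorphisms of $\what{M}$, not of $G$), and the principle itself fails for general metabelian groups: central automorphisms coming from $\Hom(G^\ab, Z(G)\cap G')$ are trivial on $G^\ab$ and on $G'$ but need not be inner. Fortunately none of this machinery is needed. The automorphism $\sigma:(x_1,x_2)\mapsto(x_1,x_1^ex_2)$ satisfies $\sigma([x_1,x_2])=[x_1,x_2]^{a_1^e}$, so $\det_c(\sigma)\in\Ahat$ and its outer class is already braid-like with abelianization $\spmatrix{1}{e}{0}{1}$ --- there is no IA-correction --- and since $G$ has exponent $e$ it fixes \emph{every} surjection $\varphi:\what{M}\to G$ on the nose. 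Its $\SL_2(\ZZ)$-conjugates therefore also fix every surjection, which puts all three matrices $1+eX_i$ in $\Gamma_\varphi$ with no determinant bookkeeping, no Lemma \ref{lemma_deligne_trick}, and no innerness argument in $G$. Finally, the diagonal step is right in your last sentence, but you need the full family $\gamma_u$, $u\equiv 1\bmod e$ (determinants covering the kernel of $\Zhat^\times\to(\ZZ/e)^\times$), not ``a single element of determinant generating $(\ZZ/e)^\times$''.
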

\begin{proof} Viewing $\SL_2(\ZZ)\subset\GL_2(\Zhat) = \Out^b(\what{M})$, let $\Gamma_\varphi := \what{H}_\varphi\cap\SL_2(\ZZ)$. Let $x_1,x_2$ generate $M$, then the automorphism $(x_1,x_2)\mapsto (x_1,x_1^ex_2)$ fixes all surjections $\what{M}\rightarrow G$, and hence its corresponding matrix $\spmatrix{1}{e}{0}{1}$ and all of its $\SL_2(\ZZ)$-conjugates must lie in $\Gamma_\varphi$. In particular, we find
$$\ttmatrix{1}{e}{0}{1},\quad\ttmatrix{1}{0}{e}{1},\quad\ttmatrix{1+e}{-e}{e}{1-e}\in\Gamma_\varphi$$
By theorem \ref{thm_metabelian_congruence}, $\Gamma_\varphi$ is a congruence subgroup, and hence by the above lemma \ref{lemma_awesome}, we find $\Gamma_\varphi\supset\Gamma(e)$. Next, $G\Gamma(e)$ is generated by $\Gamma(e)$ together with the matrices $\spmatrix{1}{0}{0}{u}$ for $u\in\Zhat^\times, u\equiv 1\mod e$. By Lemma \ref{lemma_diagonal_automorphisms}, $G\Gamma(e)\subset\GL_2(\Zhat) = \Out^b(\what{M})$ is generated by $\Gamma(e)$ and the braid-like outer automorphisms represented by $\gamma_u : (x_1,x_2)\mapsto (x_1,x_2^u)$ for $u\in\Zhat^\times,\;u\equiv 1\mod e$. Since $G$ has exponent $e$, it's clear that any such $\gamma_u$ stabilizes $\varphi$, hence $\what{H}_\varphi\supset G\Gamma(e)$. 
\end{proof}

\begin{remark} The preceding lemmas imply that in general, if $G$ is any finite 2-generated group of exponent $e$ and $\varphi : F\rightarrow G$ is a surjection with $\Gamma_\varphi\le\SL_2(\ZZ)$ a congruence subgroup, then $\Gamma_\varphi\supset\Gamma(e)$.
\end{remark}




\bibliography{/Users/wchen/Documents/latex/references}

\end{document}